\newtheorem{theorem}{Theorem}[section]
\newtheorem{lemma}[theorem]{Lemma}
\newtheorem{proposition}[theorem]{Proposition}
\newtheorem{corollary}[theorem]{Corollary}
\theoremstyle{definition}
\newtheorem{definition}[theorem]{Definition}
\newtheorem{remark}[theorem]{Remark}
\newtheorem*{theorema}{Theorem A}
\newtheorem*{theoremb}{Theorem B}
\newtheorem*{conjecture}{Conjecture}
\newtheorem{ltheorem}{Theorem} 
\def\real{\mathbb{R}}
\def\integer{\mathbb{Z}}
\def\natural{\mathbb{N}}
\def\d{\operatorname{d}}
\def\vol{\operatorname{vol}}
\def\id{\operatorname{id}}
\def\cV{\mathcal{V}}
\def\cH{\mathcal{H}}
\def\cE{\mathcal{E}}
\def\nB{\mathbf{B}}
\def\hm{\hat{m}}
\def\hmu{\hat{\mu}}
\def\hTheta{{\hat{\Theta}}}
\def\tmu{\tilde{\mu}}
\def\leb{{\operatorname{leb}}}
\def\Dif{\operatorname{Diff}}
\def\SP{SP^{r,\alpha}_{\sigma,\mu}}
\def\SPV{SP^{r}_{\leb}(M)}
\def\tf{{\tilde{f}}}
\def\tM{{\tilde{M}}}
\def\tv{{\tilde{v}}}
\def\tq{{\tilde{q}}}
\def\tz{{\tilde{z}}}
\def\tx{{\tilde{x}}}
\def\ty{{\tilde{y}}}
\def\tm{{\tilde{m}}}
\def\tp{{\tilde{p}}}
\def\hz{{\tilde{z}}}
\def\hp{{\tilde{p}}}
\def\hx{{\tilde{x}}}
\def\hy{{\tilde{y}}}
\newcommand{\norm}[1]{{\left\lVert  #1  \right\rVert}}
\newcommand{\abs}[1]{{\left\lvert  #1  \right\rvert}}
\title[Generic positive exponents]{On the genericity of positive exponents of conservative skew products with two-dimensional fibers}
\author{Davi Obata}
\author{Mauricio Poletti}
\subjclass[2010]{37D25, 37D30, 37H15}
\keywords{Lyapunov exponents, Non-uniform hyperbolicity, Skew products, Random products of diffeomorphisms, Conservative dynamics.}
\newcommand{\information}{{
  \bigskip
  \footnotesize
	\textbf{Davi Obata}: \textsc{CNRS-Laboratoire de Math\'ematiques d'Orsay, UMR 8628, Universit\'e Paris-Sud 11, Orsay Cedex 91405, France } \par\nopagebreak
  \textsc{Instituto de Matem\'atica, Universidade Federal do Rio de Janeiro, P.O. Box 68530, 21945-970, Rio de Janeiro Brazil}\par\nopagebreak
  \textit{E-mail:} \texttt{davi.obata@math.u-psud.fr}

  \medskip
  \textbf{Mauricio Poletti}: \textsc{CNRS-Laboratoire de Math\'ematiques d'Orsay, UMR 8628, Universit\'e Paris-Sud 11, Orsay Cedex 91405, France } \par\nopagebreak
  \textit{E-mail:} \texttt{mpoletti@impa.br}
}}
\begin{document}

\begin{abstract}
In this paper we study the existence of positive Lyapunov exponents for three different types of skew products, whose fibers are compact Riemannian surfaces and the action on the fibers are by volume preserving diffeomorphisms. These three types include skew products with a volume preserving Anosov diffeomorphism on the basis; or with a subshift of finite type on the basis preserving a measure with product structure; or locally constant skew products with Bernoulli shifts on the basis. We prove the $C^1$-density and $C^r$-openess of the existence of positive Lyapunov exponents on a set of positive measure in the space of such skew products.    
\end{abstract}

\maketitle

\section{Introduction}

In the $60$'s, Smale had obtained several results about dynamical implications of uniform hyperbolicity (see \cite{Sm67}). Since then, uniform hyperbolic dynamics have been very well understood. For instance, hyperbolic transitive sets have several features, such as a symbolic dynamics associated to it, existence of periodic points and horseshoes, positive entropy, ergodicity (in the volume preserving scenario), etc. Even though, uniform hyperbolicity is a $C^1$-open property, it is not a $C^1$-dense property.

For invariant measures, Pesin proposed in \cite{Pes77} a weaker notion of hyperbolicity, called non-uniform hyperbolicity. A diffeomorphism $f$ that preserves a probability measure $\mu$ is called non-uniformly hyperbolic if all its Lyapunov exponents are non-zero (see \cite{BaP01} for precise definitions). It turns out that non-uniform hyperbolicity also imply several interesting features of the dynamics, such as existence of periodic orbits and horseshoes \cite{Ka80}, countably many ergodic components for smooth measures \cite{Pes77}, etc.

Given a probability measure $\mu$ on a compact Riemannian manifold $M$, one can consider the space of $C^r$-diffeomorphisms that preserves this measure $\mathrm{Diff}^r_{\mu}(M)$. A natural question is to know how frequent is non-uniform hyperbolicity in $\mathrm{Diff}^r_{\mu}(M)$?

There are several results related to this question, most of them for the case when $\mu$ is the Lebesgue measure. For instance, a remarkable result by Ma\~n\'e \cite{Man96} and Bochi \cite{Boc02} proved that on surfaces any area preserving diffeomorphism which is not Anosov can be $C^1$-approximated by an area preserving diffeomorphism with some zero Lyapunov exponent (see section \ref{sec.preliminaries} for the definition of Anosov diffeomorphism). 

In this paper we address this question for some skew products over hyperbolic maps. Let us define the scenarios we will be working with.

Let $\tM$ be a smooth, compact, connected and oriented manifold and $S$ be a smooth, compact and connected surface. Consider a fiber bundle $M$ over $\tM$, defined by a smooth projection $\pi: M \to \tM$, with fibers diffeomorphic to $S$. For a point $x\in M$, we write $S_x$ the fiber that contains the point $x$. We say that a diffeomorphism $f:M \to M$ \emph{preserves fibers} if for any $x\in M$ it holds $S_{f(x)} = f(S_x)$. 

A diffeomorphism $f$ is \emph{partially hyperbolic} if there is a $Df$-invariant decomposition of the tangent bundle $TM = E^s \oplus E^c \oplus E^u$, such that $Df|_{E^s}$ contracts, $Df|_{E^u}$ expands and the behavior of $Df|_{E^c}$ is bounded by the contraction along $E^s$ and the expansion along $E^u$, see section \ref{subsection.ph} for a precise definition.

For the fiber bundle $M$ a diffeomorphism $f:M \to M$  is a \emph{partially hyperbolic skew product} if the following holds:
\begin{itemize}
\item $f$ preserves fibers;
\item $f$ is a partially hyperbolic diffeomorphism, with splitting $TM =E^s \oplus E^c \oplus E^u$, such that $E^c = \ker D\pi$.
\end{itemize} 

Let $\leb$ be the normalized Lebesgue measure on $M$ and define $\SPV$ to be the set of $C^r$-partially hyperbolic skew products that preserve the Lebesgue measure. In the space $\SPV$ we consider the $C^{\hat{r}}$-topology, for any $\hat{r}\leq r$. 

By Oseledets' theorem (see for instance \cite{BaP01}), for Lebesgue almost every point $x\in M$, the greatest and smallest Lyapunov exponents along the center direction, defined respectively by 
$$\lambda_c^+(x)=\lim_{n\to + \infty} \frac{1}{n} \norm{Df^n(x)|_{E^c_x}}\textrm{ and } \lambda_c^-(x)=-\lim_{n\to +\infty} \frac{1}{n} \norm{Df^{-n}(x)|_{E^c_x}},$$
exist. The conditions in the definition of partially hyperbolic skew product implies that $\det Df(x)|_{E^c_x} = 1$ (see section \ref{sec.preliminaries}). This implies that for almost every point $x\in M$ it is verified that $\lambda_c^-(x) = -\lambda_c^+(x)$. We define the \emph{integrated Lyapunov exponent along the center direction} by
\[
L(f) = \displaystyle \int_M \lambda_c^+(x) d\leb(x).
\]

An important notion in the study of Lyapunov exponents for partially hyperbolic diffeomorphisms is the notion of center bunching, this is used to obtain the existence of linear holonomies, see section \ref{sec.preliminaries} for a precise definition. In this paper we prove the following theorem.

\begin{ltheorem}
\label{thm.theorema}
For any $r>1$, among the volume preserving, $C^r$-partially hyperbolic skew products that are center bunched, there exists a $C^1$-dense and $C^r$-open subset of diffeomorphisms verifying the following: if $f$ belongs to this subset, then $L(f)>0$.
\end{ltheorem}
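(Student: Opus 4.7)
The plan is to establish $C^1$-density and $C^r$-openness separately, both relying on the Avila--Viana invariance principle adapted to partially hyperbolic skew products with two-dimensional fibers. For density, I fix a center bunched $f \in \SPV$ and assume $L(f)=0$. Since $f$ preserves fibers and $\det Df(x)|_{E^c_x}=1$, the derivative cocycle $A(x) := Df(x)|_{E^c_x}$ is essentially $\SL$-valued; center bunching supplies continuous stable and unstable holonomies $H^s, H^u$ on the two-dimensional bundle $E^c$. Vanishing of $L(f)$ forces both center exponents to be zero a.e., and the invariance principle then forces the natural lift of $\leb$ to the projectivized bundle $\mathbb{P}(E^c)$ to have conditional measures along center-stable (resp.\ center-unstable) leaves that are invariant under $H^s$ (resp.\ $H^u$). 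I would then construct a $C^1$-small perturbation $g$ inside $\SPV$, localized in a small neighborhood of a hyperbolic periodic point of the base, whose effect on the cocycle at this periodic point is an area-preserving rotation in the fiber. This breaks the simultaneous $H^s$- and $H^u$-equivariance of any measurable section, so by the contrapositive of the invariance principle $L(g)>0$.

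For openness, I would show that $\{L>0\}$ is $C^r$-open among center bunched skew products by showing that $\{L=0\}$ is sequentially $C^r$-closed. If $f_n\to f$ in $C^r$ with each $L(f_n)=0$, then each $f_n$ carries an $(H^s_{f_n},H^u_{f_n})$-invariant measurable section $\sigma_n$ in $\mathbb{P}(E^c_{f_n})$. Since $r>1$ and the $f_n$ remain in the (open) center bunched class, the holonomies converge uniformly on compact sets; because $\proj$ is compact, a subsequence of the graph measures of $\sigma_n$ converges weakly-$\ast$ to a probability with the same projection onto $M$ that is $(H^s_f,H^u_f)$-invariant. Disintegrating this limit yields an invariant section for $f$, and invoking the invariance principle in the converse direction gives $L(f)=0$, as required.

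The delicate step is the density perturbation: the new diffeomorphism $g$ must simultaneously preserve the fibration, preserve $\leb$ on $M$, be $C^1$-close to $f$, and produce a genuine twist of the $(H^s,H^u)$-invariant section. In the two-dimensional fiber case the local twist is realized by an area-preserving rotation supported in a small tube above a periodic orbit, and this is compatible with $\SPV$, but one must argue that such a twist, when propagated through the stable/unstable holonomy pseudogroup, yields an incompatibility on \emph{some} accessible $su$-loop and is not absorbed by an alternative invariant section. This accessibility-type propagation, together with the reduction of the perturbation problem to an $\SL$-cocycle perturbation in the spirit of Bonatti--Gómez-Mont--Viana and Avila--Viana, is where the main technical work lies; the three settings announced in the abstract (Anosov base, subshift base with product measure, locally constant Bernoulli) are then three incarnations of the same scheme in which the propagation step is executed differently.
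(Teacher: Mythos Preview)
Your density argument has a genuine gap. The invariance principle, applied when $L(f)=0$, tells you that every $F$-invariant probability on $\mathbb{P}(E^c)$ projecting to $\leb$ is an $su$-state; it does \emph{not} hand you an invariant measurable \emph{section}. The conditionals of an $su$-state on the circle fibers $\mathbb{P}(E^c_x)$ can perfectly well be Lebesgue (this is exactly what happens if the center cocycle is conformal, e.g.\ if every $Df_c$ is a rotation). In that situation your proposed perturbation---an area-preserving rotation supported near a periodic fiber---changes nothing: rotations preserve Lebesgue on $\mathbb{RP}^1$, the Lebesgue $su$-state survives, and the invariance principle gives no contradiction. More generally, a rotation-type perturbation cannot force positive exponents without first producing some hyperbolicity along a periodic fiber.

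The paper addresses this by separating the problem into two steps. First it obtains \emph{pinching}: a $C^1$-small fibered perturbation so that on some periodic fiber $S_{\tp}$ the return map $f^\kappa_{\tp}$ has positive integrated exponent. This is nontrivial and relies on the Liang--Yang theorem (positive exponents on a positive-measure set are $C^1$-dense among volume-preserving surface diffeomorphisms), together with a symplectic isotopy lemma to extend the perturbation of $f_{\tp}$ to a fibered perturbation of $f$. Only once pinching holds do the conditionals of any $su$-state over $S_{\tp}$ become atomic (supported on the Oseledets directions $e^\pm(t)$), and then a second, $C^r$-small, rotation-type perturbation along a homoclinic excursion (the \emph{twisting} step) moves those atoms off themselves, ruling out any $su$-state. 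Your single rotation is essentially the twisting step, but you are missing the pinching step that makes it effective.

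Your openness argument is closer in spirit to the paper's, but also needs correction: what one shows is that $su$-\emph{states} (not sections) are closed under $C^r$-limits of the cocycle; the paper proves this directly (limits of $u$-states are $u$-states). Passing through ``graph measures of sections'' is not available, since no sections need exist.
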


We say that $f\in \SPV$ is \emph{non-uniformly hyperbolic} if for Lebesgue almost every point it holds that $\lambda^+(x) >0$. From \cite{AV4} (or \cite{HoS17}), it is known that ergodicity is $C^1$-open and $C^r$-dense in the setting of the previous theorem. The next result follows immediately from theorem \ref{thm.theorema}.

\begin{corollary}
In the same setting of theorem \ref{thm.theorema}, there exists a $C^1$-dense and $C^r$-open subset such that any diffeomorphism in this subset is non-uniformly hyperbolic.
\end{corollary}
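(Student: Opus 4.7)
The plan is to combine Theorem \ref{thm.theorema} with the ergodicity result from \cite{AV4} (or \cite{HoS17}). Let $\mathcal{A}$ denote the $C^1$-dense and $C^r$-open subset of center bunched skew products in $\SPV$ given by Theorem \ref{thm.theorema}, on which $L(f)>0$, and let $\mathcal{B}$ denote the $C^1$-open and $C^r$-dense subset of ergodic skew products provided by the quoted ergodicity result. I will show that $\mathcal{A}\cap \mathcal{B}$ itself is $C^1$-dense and $C^r$-open, and that every element of it is non-uniformly hyperbolic; the corollary then follows.

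The first step is to verify the two topological properties of $\mathcal{A}\cap \mathcal{B}$. The intersection is $C^r$-open because $\mathcal{A}$ is $C^r$-open and $\mathcal{B}$, being $C^1$-open, is also $C^r$-open (the $C^r$ topology is finer than the $C^1$ topology). For $C^1$-density, given any center bunched $f\in \SPV$ and any $C^1$-neighborhood $U$ of $f$, I pick $g_1\in \mathcal{A}\cap U$ using $C^1$-density of $\mathcal{A}$; $C^r$-openness of $\mathcal{A}$ provides a $C^r$-neighborhood $V\subset \mathcal{A}$ of $g_1$ which I may shrink so that $V\subset U$; $C^r$-density of $\mathcal{B}$ then yields some $g_2\in V\cap \mathcal{B}\subset (\mathcal{A}\cap \mathcal{B})\cap U$, as required.

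The second step is the dynamical conclusion. For $f\in \mathcal{A}\cap \mathcal{B}$, ergodicity with respect to $\leb$ makes the $f$-invariant measurable function $x\mapsto \lambda_c^+(x)$ almost surely constant, and its value must therefore equal its Lebesgue average $L(f)>0$. Combined with the strict positivity of exponents along $E^u$ and the strict negativity along $E^s$ coming from partial hyperbolicity, this yields positivity of the top Lyapunov exponent at $\leb$-a.e.\ point, matching the paper's definition of non-uniform hyperbolicity. The only real content in the argument is the density half of the intersection step above; beyond that the corollary is a formal consequence of Theorem \ref{thm.theorema} and the cited ergodicity result, so I do not expect any substantial obstacle.
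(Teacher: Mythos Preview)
Your proof is correct and follows exactly the route the paper intends: the paper does not give an explicit proof but simply notes, just before the corollary, that ergodicity is $C^1$-open and $C^r$-dense by \cite{AV4} (or \cite{HoS17}) and asserts the corollary ``follows immediately'' from Theorem~\ref{thm.theorema}. You have correctly spelled out that immediate deduction, including the small topological verification that the intersection of a $C^1$-dense, $C^r$-open set with a $C^1$-open, $C^r$-dense set is again $C^1$-dense and $C^r$-open.
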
 

With some extra condition (called \emph{pinched}), Marin in \cite{Mar16}, proved the density of non-uniform hyperbolicity for partially hyperbolic maps volume preserving maps with $2$-dimensional center direction, in her argument accessibility and volume preserving are crucial properties because they used the results of \cite{ASV13}. As our argument is not based on \cite{ASV13}, we do not use accessibility and neither all the properties of the volume preserving maps, so we can extend
our results to more general skew products.

Let $\Sigma$ be a compact metric space with no isolated points, let $\sigma:\Sigma \to \Sigma$ be a hyperbolic homeomorphism and $\tilde{\mu}$ be a $\sigma$-invariant measure that has a property called \emph{local product structure} (see section \ref{subsection.hyphomeo} for precise definitions). This property holds for important measures such as the equilibrium states of H\"older potentials (see \cite{Bow75a}).

Let $S$ be a compact, oriented $C^r$-surface. Fix some $\alpha>0$, by abuse of notation let $\leb$ be the normalized Lebesgue measure on $S$ and $ \Dif^r_\leb(S)$ be the space of $C^r$ diffeomorphisms that preserves $\leb$. Given a $(H,\alpha)$-H\"older map from $\Sigma$ to $ \Dif^r_\leb(S)$, $\hx  \mapsto  f_\hx$, by this we mean that 
\[
\displaystyle \d_{C^{\hat{r}}}(f_\hx,f_\hy) \leq H  \d_{\Sigma}(\hx, \hy)^\alpha.
\]
We define the skew product 
\[
\begin{array}{rcc}
f:{\Sigma\times S} &\to & \Sigma \times S\\
(\hx,t) & \mapsto & \quad f(\hx,t) =(\sigma(\hx),f_\hx(t)).
\end{array}
\]
Observe that such a skew product preserves the measure $\mu := \tilde{\mu} \times \leb$. Such a map is called \emph{$C^{r,\alpha}$-skew product} over $\sigma$ that preserves $\mu$. 

From now on fix $C$, $\alpha$  and $r\geq 1+\alpha$, we write $\SP({\Sigma\times S})$ for the space of such skew products over $\sigma$. In this space we consider the $C^{\hat{r}}$-topology, for any $\hat{r}\leq r$ defined as follows: for any two $C^{r,\alpha}$-skew products $f,g\in \SP({\Sigma\times S})$, the $C^{\hat{r}}$ distance between $f$ and $g$ is
\begin{equation}\label{eq.distance} 
\d_{C^{\hat{r}}}(f,g) = \sup_{\hx \in \Sigma} \d_{C^{\hat{r}}(S)}(f_{\hx}, g_{\hx}),
\end{equation}
where $\d_{C^{\hat{r}},\hx}(f_{\hx}, g_{\hx})$ is the $C^{\hat{r}}$ distance on $\Dif^r_\leb(S)$. Keep in mind that $\sigma$ is always fixed.

As our map $f$ is smooth on the fiber direction, we can define the center Lyapunov exponents as 
$$\lambda^+(\hx,t)=\lim_{n\to + \infty} \frac{1}{n} \norm{Df^n_\hx (t)}\textrm{ and } \lambda^-(\hx,t)=-\lim_{n\to +\infty} \frac{1}{n} \norm{Df^{-n}_\hx(t)},$$
where $f^n_\hx=f_{\sigma^{n-1}(\hx)}\circ \dots \circ f_\hx$. This is defined $\mu$-almost everywhere.

Similar to the notion of center bunching, there is a notion of fiber bunching which guarantees the existence of linear holonomies, see section~\ref{sec.preliminaries} for precise definitions. 

\begin{ltheorem}\label{teo.B}
Let $\sigma$ be a hyperbolic homeomorphism and let $\tilde{\mu}$ be a $\sigma$-invariant measure with local product structure. For any $r>1$ and $\alpha>0$, there exists a $C^1$-dense and $C^r$-open subset of $\SP({\Sigma\times S})$ verifying the following: if $f$ belongs to this subset, then $L(f)>0$.
\end{ltheorem}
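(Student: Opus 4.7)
The strategy is to apply the invariance-principle / perturbation paradigm for H\"older cocycles over hyperbolic bases (\`a la Avila--Viana) to the fibre derivative cocycle $\mathcal A(\hx,t):=Df_{\hx}(t)$. Since each $f_{\hx}$ preserves Lebesgue on the surface $S$, $\mathcal A$ is, after local trivialisation of $TS$, essentially an $\SL$-valued cocycle over the skew product $f$, and the base $(\sigma,\tmu)$ is hyperbolic with local product structure, placing us exactly in the framework of the linear-cocycle machinery.

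For the $C^r$-openness of $\{L>0\}$, I would split $\SP({\Sigma\times S})$ along fibre bunching. On the fibre-bunched open set the invariant stable and unstable holonomies of $\mathcal A$ exist and depend $C^r$-continuously on $f$, and then the continuity of Lyapunov exponents of H\"older cocycles at points where they do not vanish (Backes--Brown--Butler, and earlier Viana in the $\SL$ case) gives that $f\mapsto L(f)$ is continuous on $\{L>0\}$, hence $\{L>0\}$ is open there. Outside the fibre-bunching region, upper semi-continuity of $L$ combined with a robust mechanism of positivity produced by excess of fibre stretching over base contraction (a dominated splitting of $\mathcal A$ on an invariant subset) gives openness in that regime as well.

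For the $C^1$-density, fix $f$ with $L(f)=0$ and $\varepsilon>0$. An optional $C^1$-small preliminary perturbation allows one to assume $f$ is fibre-bunched; otherwise we are already in the dominated-splitting case above. The Avila--Viana invariance principle then asserts that the disintegrations of any $f$-invariant probability on the projective bundle $\mathbb P(TS)$ projecting to $\mu$ are simultaneously invariant under the stable and unstable projective holonomies of $\mathcal A$. Fixing a $\sigma$-periodic point $\hp\in\supp\tmu$ and a homoclinic point $\hz$ transverse to it, this rigidity forces the holonomy cocycle along any homoclinic loop through $\hp$ to preserve a probability measure on $\mathbb P(T_{\hp}S)$, equivalently to lie in a compact subgroup of $\SL$ up to an adapted change of frame at $\hp$. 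I then construct a compactly supported, area-preserving perturbation of $f_{\hz}$ in a small disc of $S_{\hz}$ realising any prescribed $\SL$-twist on $Df_{\hz}$ (an area-preserving Franks-type lemma in two dimensions), and extend it to a small neighbourhood of $\hz$ in $\Sigma$ via a H\"older bump so as to stay inside $\SP({\Sigma\times S})$ with the prescribed constants and within $\varepsilon$ in $C^1$. A generic choice of the twist breaks the compact-group relation, making the perturbed cocycle pinching and twisting at $\hp$ in the sense of Avila--Viana, and therefore forcing $L(g)>0$ on a positive-$\mu$-measure set.

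The main obstacle is the simultaneous control, in the density step, of area preservation on the fibre, H\"older dependence in the base variable, $C^1$-smallness of the perturbation, and genuine breaking of the invariance-principle rigidity. The first three require a careful quantitative symplectic local perturbation lemma on surfaces together with a H\"older cut-off in $\Sigma$, whereas the last is an algebraic open-density statement in $\SL$ that must be verified at the specific base periodic point and its homoclinic orbit, coupling the geometric and dynamical constraints. A secondary difficulty is extending the existing continuity-of-exponents results to the fully non-linear skew-product framework, which hinges on showing that the holonomies of $\mathcal A$ vary continuously with $f$ in $C^r$.
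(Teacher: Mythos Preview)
Your proposal has a structural gap in the density step. The derivative cocycle $\mathcal A=Df_c$ is a linear cocycle over the \emph{skew product} $f:\Sigma\times S\to\Sigma\times S$ with measure $\mu=\tmu\times\leb$, and this base is only partially hyperbolic, not hyperbolic. It is not a cocycle over $(\sigma,\tmu)$; the point $\hp\in\Sigma$ is $\sigma$-periodic, but it is not a periodic point of the base of $\mathcal A$. Hence the Avila--Viana pinching/twisting criterion for linear cocycles over hyperbolic bases cannot be invoked at $\hp$ as you describe. A Franks-type perturbation of $Df_{\hz}$ at a single fibre point changes the cocycle over a single $f$-orbit, which has $\mu$-measure zero; it cannot by itself rule out $su$-states for the projective cocycle over $(\Sigma\times S,\mu)$, nor produce positive exponents on a set of positive $\mu$-measure.

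The paper's route is genuinely different at this point. Its pinching condition (Definition~\ref{def.pinching}) asks that the fibre diffeomorphism $f^\kappa_{\hp}:S_{\hp}\to S_{\hp}$ have positive Lyapunov exponent on a set of positive Lebesgue measure; this is achieved by $C^1$-perturbing $f_{\hp}$ itself (not its derivative at a point) using the Liang--Yang theorem on $C^1$-density of non-uniform hyperbolicity for surface diffeomorphisms, then extending to a fibred perturbation via Lemma~\ref{lem.smoothpath}. Twisting (Definition~\ref{def.twisting}) is then a measure-theoretic condition on how the linear holonomy along a $\sigma$-homoclinic loop moves the Oseledets directions of $f^\kappa_{\hp}$ on a positive-measure subset of $S_{\hp}$; it is obtained by a local rotation supported near the homoclinic fibre (Lemma~\ref{l.twisting.dense.homeo}). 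The combination rules out $su$-states over the partially hyperbolic base via Proposition~\ref{p.p+t}, and the invariance principle of \cite{ASV13} gives $L>0$. For openness, rather than continuity of exponents \`a la Backes--Brown--Butler, the paper proves directly that limits of $u$-states are $u$-states (Theorem~\ref{t.closedu}); this avoids having to transport the full continuity statement to the non-linear skew-product setting and works under the fixed H\"older-constant hypothesis built into $\SP(\Sigma\times S)$.
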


In Theorem~\ref{teo.B} we need to fix the H\"older constant because, as opposed to the $C^r$ distance in the setting of Theorem~\ref{thm.theorema}, the distance defined in \eqref{eq.distance} does not take in account the relation between the H\"older norm of $\hx\mapsto f_\hx$ and $\hx\mapsto g_\hx$. This hypothesis can be replaced by a finer $C^r$ topology that takes into account the H\"older distance between the maps defining $f$ and $g$.

\begin{remark}
Actually in theorems~\ref{thm.theorema} and \ref{teo.B} the open sets are $C^1$ open sets on subsets of $C^r$ with bounded $C^r$ norm if $r<2$ and bounded $C^2$ norm if $r\geq 2$. 
\end{remark}

A particular case very studied in the literature is called \emph{random product of diffeomorphisms}, see for instance~\cite{KoNa10}, \cite{Bro17}.
This is defined by a set of $C^r$-diffeomorphisms $f_1,\dots,f_d\in \Dif^r_\leb(S)$ and $p_1,\dots,p_d$ positive real numbers such that $p_1+\dots+p_d=1$ 
where the probability $p$ of the diffeomorphism $f_i$ to act on $S$ is $p_i$. Formally this is a skew product of
over the shift map in $\{1,\dots,d\}^\integer$ with Bernoulli probability $P=p^\integer$. With our techniques, we obtain the following theorem.
\begin{ltheorem}\label{teo.C}
Given $d\in \natural$ and some probability on $p$ on $\{1,\dots,d\}$ then in $\Dif_\leb (S)^d$ there exist a
$C^1$ open and dense set such that if $(f_1,\dots,f_d)$ belongs to this set its random product has positive integrated Lyapunov exponents.
\end{ltheorem}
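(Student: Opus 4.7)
The plan is to view random products as a subclass of the skew products in Theorem~\ref{teo.B} and adapt its proof so that the perturbations stay inside this subclass. Let $\Sigma = \{1,\dots,d\}^\integer$ with the shift $\sigma$ and the Bernoulli measure $\tmu = p^\integer$; then $\sigma$ is a hyperbolic homeomorphism and $\tmu$, being a product measure, has local product structure. A random product defined by $(f_1,\dots,f_d)$ is precisely the locally constant skew product $f(\hx,t) = (\sigma(\hx), f_{x_0}(t))$, so $\hx \mapsto f_\hx$ is trivially $(H,\alpha)$-H\"older for every $H$ and $\alpha$, and $(f_1,\dots,f_d) \mapsto f$ embeds $\Dif_\leb(S)^d$ inside $\SP({\Sigma\times S})$. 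Openness in the $C^1$ topology on $\Dif_\leb(S)^d$ is then immediate from the remark after Theorem~\ref{teo.B}: if $L(f) > 0$, then $L > 0$ on a $C^1$-neighborhood of $f$ in $\SP({\Sigma\times S})$ (inside a $C^2$-bounded subset), and any $C^1$-perturbation of $(f_1,\dots,f_d)$ inside $\Dif_\leb(S)^d$ yields a locally constant skew product sitting in that neighborhood.

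Density, by contrast, is \emph{not} an immediate consequence of Theorem~\ref{teo.B}, since the approximating skew product it produces need not be locally constant. The strategy is to revisit the proof of Theorem~\ref{teo.B} and show that, when one starts from a locally constant $f$, the perturbations needed to force $L > 0$ can themselves be taken to be locally constant, i.e.\ realized by perturbing a single generator $f_i$ on a small ball of $S$ while keeping the other generators fixed. Concretely, the standard mechanism is to assume towards a contradiction that $L = 0$ on a nonempty $C^1$-open set of locally constant skew products; this forces an invariant measurable structure along the fibers (a holonomy-invariant family of conformal or projective objects on $S$, in the style of Avila--Viana and the proof of Theorem~\ref{teo.B}). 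One then breaks this structure by a localized fiber perturbation; for a locally constant $f$, a perturbation on a fiber over $\hx_0$ with $0$-th coordinate $i$ is realized by perturbing $f_i$, and it acts simultaneously on every fiber over the cylinder $\{x_0 = i\}$, a set of positive $\tmu$-measure.

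The main obstacle is precisely this simultaneity: one has to verify that a coordinated perturbation of a single generator $f_i$ actually destroys the invariant structure, rather than being accidentally compatible with it because of the locally constant setting. This amounts to a genericity / transversality argument inside $\Dif_\leb(S)^d$ showing that the family of cocycles admitting such an invariant structure is meager. Once this is established, it combines with the $C^1$-openness of $\{L > 0\}$ observed above to yield the required $C^1$-open and dense subset of $\Dif_\leb(S)^d$.
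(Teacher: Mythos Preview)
Your proposal correctly identifies the setup (locally constant skew products over the full shift) and the reason openness is genuinely $C^1$ here rather than ``$C^1$ on $C^r$-bounded sets'': for locally constant cocycles the stable/unstable holonomies and linear holonomies are literally the identity, so they vary continuously with $(f_1,\dots,f_d)$ in the $C^1$ topology with no extra regularity bound (this is Remark~\ref{rem.continuity.holon}), and Lemma~\ref{lem.closedu} then gives the open set. Your openness argument via the remark after Theorem~\ref{teo.B} is slightly roundabout but essentially correct.

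The density argument, however, has a real gap. You correctly diagnose that the issue is to perform the perturbations of Theorem~\ref{teo.B} while staying in the locally constant class, but then you replace the actual work by the phrase ``this amounts to a genericity/transversality argument \dots\ showing that the family of cocycles admitting such an invariant structure is meager.'' That sentence is precisely the content of the theorem, not a proof of it; nothing you have written explains \emph{which} invariant structure arises, or \emph{how} a perturbation of a single generator breaks it. The paper's proof is concrete on both points. The relevant structure is an $su$-state for the projectivized derivative cocycle $F$ on $PS$, and the mechanism to exclude it is the pinching/twisting criterion (Proposition~\ref{p.p+t} and Theorem~\ref{th.criterionexponents}). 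Density is then obtained in two explicit steps, each of which perturbs exactly one generator: first, \emph{pinching} is achieved by applying Theorem~\ref{thm.ly17} (Liang--Yang) to a single $f_i$, producing a nearby volume-preserving surface diffeomorphism with positive exponent on a positive-measure set --- this is automatically a perturbation inside $\mathrm{Diff}^r_{\leb}(S)^d$ and requires no fibered homotopy (Lemma~\ref{lem.smoothpath} is not needed here). Second, \emph{twisting} is obtained as in Lemma~\ref{l.randomtwist}: one chooses a homoclinic point $\tz$ of the fixed point $\tp=(i,i,\dots)$ whose $0$-th coordinate is some $l\neq i$, and composes $f_l$ with a small compactly supported rotation near a carefully chosen point of $S$; since the holonomies are the identity, this rotation directly tilts the Oseledets directions and destroys any $su$-state. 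Your proposal contains neither of these two ingredients, and without them the ``simultaneity obstacle'' you mention cannot be resolved.
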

Observe that in this case we actually get a $C^1$ open set.

We can only get $C^1$ density because we use the result of \cite{LY17} where they prove the $C^1$ density of volume preserving diffeomorphisms with positive Lyapunov exponents to find what we call \emph{pinching} points (see definition~\ref{def.pinching}), this result is not known in the $C^r$ topology.

With some information on the periodic diffeomorphism, pinching can be found in some higher regularity, as we explain in section~\ref{ss.cr-density} we have the following result. 

\begin{ltheorem}\label{thm.elliptic}
Let $f$ be as in theorem~\ref{thm.theorema}, \ref{teo.B} or \ref{teo.C} and there exist some periodic fiber $S_\tp$ such that $f_\tp:S_\tp\to S_\tp$ has an elliptic periodic point. Then $f$ is $C^r$-accumulated by open sets with positive integrated Lyapunov exponents. Moreover, in the random product case this sets are $C^1$ open. 
\end{ltheorem}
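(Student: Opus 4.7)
The plan is to reduce Theorem~\ref{thm.elliptic} to Theorems~\ref{thm.theorema}, \ref{teo.B}, and \ref{teo.C} by exhibiting, in every $C^r$-neighborhood of $f$, a perturbation $\tilde f$ that satisfies both the pinching property (Definition~\ref{def.pinching}) and the twisting condition used in their proofs. In those theorems the reason only $C^1$-density is achieved is that pinching is produced via the $C^1$-density of non-uniform hyperbolicity of \cite{LY17}; if instead pinching is produced by a $C^r$-small perturbation, the rest of the argument—yielding positive integrated exponents on a $C^r$-open neighborhood—goes through unchanged. Under the hypothesis of an elliptic periodic point $q$ in some periodic fiber $S_\tp$, we can produce pinching by a purely local perturbation of $f_\tp$ inside $S_\tp$ and then lift it to the whole skew product.

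The first step takes place inside the fiber $S_\tp$, where $f_\tp^n$ is area-preserving (with $n$ the period of $\tp$) and has an elliptic fixed point $q$. By standard results for area-preserving surface diffeomorphisms (e.g.\ Birkhoff normal form combined with the Poincar\'e--Birkhoff theorem, or Zehnder-type generic creation of transverse homoclinic intersections), arbitrarily $C^r$-small area-preserving perturbations of $f_\tp^n$ supported in a small disk around $q$ produce a hyperbolic periodic orbit $\tilde q$ close to $q$, with real distinct eigenvalues. The corresponding orbit of the perturbed skew product, namely the orbit of $(\tp,\tilde q)$, projects to the periodic base orbit of $\tp$ and has derivative cocycle along the centre direction with distinct nonzero Lyapunov exponents, so the perturbed map is pinching at $(\tp,\tilde q)$.

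Second, I lift the fiber perturbation to a $C^r$-small global perturbation of $f$ staying in the skew-product class. In the setting of Theorem~\ref{thm.theorema} one uses a smooth bump function on $\tM$ supported in a small tubular neighborhood of the base orbit $\{\sigma^j(\tp)\}$, together with a local trivialization of the fiber bundle, so that the local fiber perturbation is applied only near the orbit of $\tp$; in Theorem~\ref{teo.B} one uses a H\"older cut-off on cylinders around $\tp$; in Theorem~\ref{teo.C} one simply perturbs a single generator $f_i$, which is why the resulting set is even $C^1$-open. The resulting $\tilde f$ remains volume-preserving and fiber-preserving, is $C^r$-close to $f$, and is pinching at $(\tp,\tilde q)$. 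A further arbitrarily small localized perturbation (e.g.\ near a homoclinic orbit of $\tilde q$ in the fiber, breaking any coincidence of stable and unstable holonomies) then enforces twisting, which is $C^r$-open and dense once pinching holds. Theorems~\ref{thm.theorema}--\ref{teo.C} applied to $\tilde f$ then supply the desired $C^r$-open (respectively $C^1$-open for Theorem~\ref{teo.C}) neighborhood with positive integrated exponents.

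The main obstacle is controlling the size of the perturbation in the appropriate topology while maintaining the skew-product class: in Theorem~\ref{teo.B}'s setting the H\"older constant of $\hx\mapsto \tilde f_\hx$ must stay within the fixed bound, which is achievable because the fiber perturbation lives on a fixed compact disk in a single fiber while the base cut-off can be chosen with sharply decaying H\"older seminorm as the $C^0$ size of the fiber perturbation shrinks. A secondary concern is that the Birkhoff normal form needs $r$ large enough to reach a high-order normal form; in low regularity one instead appeals directly to the creation of a transverse homoclinic intersection close to $q$, using that elliptic periodic points of area-preserving surface diffeomorphisms are arbitrarily well approximated by ones with hyperbolic periodic orbits in any prescribed neighborhood.
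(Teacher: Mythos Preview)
Your reduction strategy is correct: the only step in the proofs of Theorems~\ref{thm.theorema}--\ref{teo.C} that is not already $C^r$ is obtaining the pinching condition, and once pinching holds Lemma~\ref{l.twisting.dense} (resp.\ \ref{l.twisting.dense.homeo}, \ref{l.randomtwist}) gives twisting by a further $C^r$-small perturbation, after which Theorem~\ref{th.criterionexponents} yields the open set. However, your mechanism for producing pinching does not work. Re-read Definition~\ref{def.pinching}: pinching means
\[
\int_{S_{\tp}} \lambda^+\!\big(Df^\kappa_{\tp},t\big)\,d\mu^c_{\tp}(t)>0,
\]
i.e.\ the fiber map $f^\kappa_{\tp}$ must have a positive Lyapunov exponent on a set of \emph{positive Lebesgue measure} in $S_\tp$. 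Creating a single hyperbolic periodic point $\tilde q$ near the elliptic point $q$ via Birkhoff normal form or Zehnder-type arguments only gives nonzero exponents on the finite orbit of $\tilde q$, which has $\mu^c_\tp$-measure zero and contributes nothing to the integral. (You seem to be importing the ``pinching'' notion from the linear-cocycle literature, where it refers to simple spectrum at a periodic orbit; that is not the definition used here.) Likewise, your twisting perturbation ``near a homoclinic orbit of $\tilde q$ in the fiber'' is misplaced: twisting in Definition~\ref{def.twisting} concerns a homoclinic point $\tz$ of $\tp$ in the \emph{base} $\Sigma$, and the set $K\subset NUH_{\tp}$ must again have positive $\mu^c_\tp$-measure.

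What is actually needed, and what the paper invokes, is the theorem of Berger--Turaev \cite{BeTur}: an area-preserving surface diffeomorphism with a non-hyperbolic periodic point can be $C^\infty$-perturbed to have positive metric entropy, hence positive Lyapunov exponent on a set of positive Lebesgue measure. Combined with Avila's $C^r$-density of $C^\infty$ conservative maps \cite{Av09-reg} and the robustness of elliptic points, this gives a $C^r$-small perturbation of $f_{\tp}$ with positive integrated exponent, i.e.\ pinching in the sense of Definition~\ref{def.pinching}. After that, the lifting to the skew product and the rest of your outline proceed exactly as in the proofs of Theorems~\ref{thm.theorema}--\ref{teo.C}. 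The passage from a hyperbolic periodic point to a positive-measure non-uniformly hyperbolic set is genuinely hard (it is essentially Herman's positive entropy conjecture), so you should not expect an elementary substitute for \cite{BeTur} here.
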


\subsection*{Acknowledgements}
The author would like to thank Sylvain Crovisier for useful conversations. D.O. and M.P were supported by the ERC project 692925 NUHGD.

\section{Preliminaries and precise statements }\label{sec.preliminaries}
In this section we recall some definitions and give the precise statements of the main theorems. 
\subsection{Disintegration of measures}
Let $M$ be a fiber bundle over $\tM$, $\mu$ be a probability measure on $M$ and $\tmu=\pi_*\mu$,
we say that a family of probability measures $\tx\mapsto \mu_\tx$, defined $\tmu$-almost everywhere, is a disintegration 
of $\mu$ with respect to the fibers if
\begin{itemize}
\item for every $A\subset M$, $\tx\mapsto \mu_\tx(A)$ is measurable,
\item $\mu(A)=\int \mu_\tx(A)d\tmu(\tx)$,
\item $\mu_\tx(\pi^{-1}(\tx))=1$.
\end{itemize}

The partition into fibers verifies a measurability condition and by Rokhlin disintegration theorem (see \cite{FET} chapter $5$ for details) for any probability measure $\mu$ there exists a disintegration of $\mu$ with respect to the fibers. Moreover this disintegration is unique almost everywhere.

\subsection{ Partial hyperbolicity and restatement of theorem \ref{thm.theorema}}
\label{subsection.ph}
A $C^r$-diffeomorphism $f:M \to M$ is partially hyperbolic if the tangent bundle has a $Df$-invariant decomposition $TM = E^s \oplus E^c \oplus E^u$ and there is a riemannian metric such that for any $x\in M $ it holds
\[\arraycolsep=1.4pt\def\arraystretch{1.3}
\begin{array}{l}
\norm{Df(x)|_{E^s_x}} < 1 < m(Df(x)|_{E^u_x}),\\ 
\norm{Df(x)|_{E^s_x}} < m(Df(x)|_{E^c_x}) \leq \norm{Df(x)|_{E^c_x}} < m(Df(x)|_{E^u_x}),
\end{array}
\]
where $m(Df(x)) = \norm{Df(x)^{-1}}^{-1}$ is the co-norm. It is well known that the distribution $E^s$ is uniquely integrable, that is, it exists a unique foliation $\mathcal{F}^s$ tangent to $E^s$, whose leaves are $C^r$ immersed submanifolds. For a point $x\in M$ we denote the leaf of such foliation that contains $x$ by $W^{ss}(x)$ and we call it the \emph{strong stable manifold} of $x$. Similarly, we define the strong unstable manifold of $x$ and denote it by $W^{uu}(x)$. If the subbundle $E^c$ is trivial, then we say that $f$ is \emph{Anosov}. 

Recall that $\SPV$ is the space of partially hyperbolic skew products, for the fiber bundle $M$, with fibers $S$ and base $\tM$. Let $f\in \SPV$, the invariance of the fibers implies that there exists some $C^r$-diffeomorphism $\tf:\tM\to \tM$ such that 
$\tf\circ\pi=\pi\circ f$. Since $\pi$ is a smooth map, $\tmu=\pi_* \leb$ is a smooth volume measure in $\tM$.

Let $x\to \mu^c_x$ be the disintegration of $\leb$ on the fibers $S_x$. Since $M$ is a smooth fiber bundle,
the measure $\mu^c_x$ is a smooth volume measure on $S_x$ and $x\to \mu^c_x$ is continuous in the weak$^*$ topology.
Observe that the invariance of $\leb$ implies that for $\tmu$-almost every $\tx\in \tM$, ${f_\tx}_*\mu^c_\tx=\mu^c_{\tf(\tx)}$, 
then by the continuity of the disintegration, this is actually true for 
every $\tx\in \tM$.

\begin{lemma}\label{l.anosov}
 $\tf:\tM\to \tM$ is an Anosov diffeomorphism.
\end{lemma}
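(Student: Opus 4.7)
The plan is to construct the stable and unstable bundles of $\tilde f$ by pushing the corresponding bundles of $f$ down through $D\pi$, and then read off hyperbolicity from that of $f$.

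First I set up notation. Because $f$ preserves fibers, the relation $\pi\circ f=\tilde f\circ \pi$ defines a $C^r$-diffeomorphism $\tilde f:\tilde M\to\tilde M$ (well-definedness is immediate: if $\pi(x)=\pi(x')$ then $f(x),f(x')$ lie in the same fiber, so they project to the same point). Differentiating gives the key commutation $D\pi\circ Df=D\tilde f\circ D\pi$. Since by hypothesis $E^c=\ker D\pi$, the restriction $D\pi(x):E^s_x\oplus E^u_x\to T_{\pi(x)}\tilde M$ is injective; and a dimension count ($\dim M=\dim\tilde M+\dim S$, $\dim E^c=\dim S$) shows it is in fact a continuous isomorphism. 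By compactness of $M$, the family of these isomorphisms has uniformly bounded norm and co-norm.

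Next I define, for $\tilde x\in\tilde M$ and any choice of $x\in\pi^{-1}(\tilde x)$,
\[
\tilde E^s_{\tilde x}:=D\pi(x)\bigl(E^s_x\bigr),\qquad \tilde E^u_{\tilde x}:=D\pi(x)\bigl(E^u_x\bigr).
\]
To see these are independent of the choice of $x$ in the fiber, I use the dynamical characterization: from the commutation $D\pi\circ Df^n=D\tilde f^n\circ D\pi$ and the exponential contraction $\|Df^n v\|\le C\lambda^n\|v\|$ on $E^s$ (with $\lambda<1$), together with the uniform bound on $\|D\pi\|$, every vector in $D\pi(x)(E^s_x)$ is exponentially contracted by $D\tilde f^n$; symmetrically, every vector in $D\pi(x)(E^u_x)$ is exponentially contracted by $D\tilde f^{-n}$. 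Hence $D\pi(x)(E^s_x)$ is contained in the set of forward-contracted vectors and $D\pi(x)(E^u_x)$ in the set of backward-contracted vectors; these two sets intersect trivially and their dimensions add up to $\dim \tilde M$, forcing equality regardless of the representative $x$ chosen in the fiber. Continuity of $\tilde E^{s,u}$ on $\tilde M$ follows by using local sections of $\pi$ and continuity of $E^{s,u}$ on $M$.

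Finally I check the Anosov inequalities. Invariance is immediate from the commutation: $D\tilde f(\tilde E^\sigma_{\tilde x})=D\tilde f\,D\pi(E^\sigma_x)=D\pi\,Df(E^\sigma_x)=D\pi(E^\sigma_{f(x)})=\tilde E^\sigma_{\tilde f(\tilde x)}$ for $\sigma\in\{s,u\}$. For contraction on $\tilde E^s$, given $\tilde v=D\pi(x)v$ with $v\in E^s_x$, the uniform boundedness of $D\pi$ and of the inverse of $D\pi|_{E^s\oplus E^u}$ gives $\|\tilde v\|\asymp\|v\|$, so the estimate $\|Df^n v\|\le C\lambda^n\|v\|$ transfers to $\|D\tilde f^n\tilde v\|\le C'\lambda^n\|\tilde v\|$; the expansion on $\tilde E^u$ follows the same way. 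Thus $\tilde f$ admits a continuous, $D\tilde f$-invariant hyperbolic splitting, i.e.\ it is Anosov. The only point requiring care is the well-definedness of the pushed-down bundles across a fiber, which is why I single out the dynamical characterization above as the heart of the argument.
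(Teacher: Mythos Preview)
Your proof is correct and follows essentially the same route as the paper's: push $E^s$ and $E^u$ down via $D\pi$, use the commutation $D\pi\circ Df=D\tilde f\circ D\pi$ together with the uniform comparability $\|D\pi(x)v\|\asymp\|v\|$ on $E^s\oplus E^u$ to transfer the hyperbolic estimates, and invoke the dynamical characterization of the stable/unstable subspaces to see that the pushed-down bundles do not depend on the lift $x\in\pi^{-1}(\tilde x)$. The paper obtains the comparability constant from the angle between $E^c$ and $E^s$ being bounded away from zero, whereas you phrase it via compactness and injectivity of $D\pi|_{E^s\oplus E^u}$; these are the same observation.
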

\begin{proof}
Since the directions $E^c$ and $E^s$ have angle bounded away from $0$, there exists a constant $C>0$ such that for every $v\in E^s_x$, it holds 
 $$\frac{1}{C}\norm{v}\leq \norm{D\pi(x)v}\leq C\norm{v}.$$
 
 Let $\tx=\pi(x)$ and define $\tilde{E}^s_\tx=D\pi(x)E^s_x$. Since $D\pi(f(x)) Df(x)=D\tf(\tx) D\pi(x)$, for $\tv\in \tilde{E}^s_\tx$ such that $\tv =D\pi(x)v$, with $v\in E^s_x$, for every $n\in \integer$ we have
 $$\begin{aligned}
 \norm{D\tf^n(\tx)\tv}&=\norm{D\pi(f^n(x)) Df^n(x)v}\\
 &\leq C\norm{Df^n(x)v}\\
 &\leq C^2 \norm{Df^n(x)\mid_{E^s_x}}\norm{\tv}.
 \end{aligned}$$
Analogously, $ C^{-2} \norm{{Df^n(x)\mid_{E^s_x}}^{-1}}^{-1}\norm{\tv}\leq \norm{D\tf^n(\tx)\tv}$, hence, any $\tv\in \tilde{E}^s_\tx$ is contracted exponentially fast when $n\to \infty$ 
 and expanded exponentially fast when $n\to -\infty$. 
  We can also define $\tilde{E}^u_\tx=D\pi(x)E^u_x$, this subspace is contracted when $n\to -\infty$ and expanded when $n\to \infty$. 
  
 Since $T_xM= E_x^s\oplus E_x^u \oplus E_x^c$ and $E^c_x = \ker D\pi(x)$, we conclude that $T_{\tx}\tM = E^s_\tx \oplus E^s_\tx$. The exponential expansion and contraction of these directions implies that this subspaces are uniquely defined, meaning they do not depend on $x\in \pi^{-1}(\tx)$. We can define such a splitting for any $\tx\in \tM$ and conclude that $D\tf(\tx)\tilde{E}^*_\tx=\tilde{E}^*_{\tf(\tx)}$ for $*=u,s$, therefore $\tf$ is Anosov.
 \end{proof}

\begin{definition}[$\alpha$-center bunching]
\label{def.centerbunching}
A $C^1$-partially hyperbolic diffeomorphism $f$ is $\alpha$-center bunched if for every $x\in M$ it holds
\[
\frac{\norm{Df(x)|_{E^c_x}}}{m(Df(x)|_{E^c_x})} \norm{Df(x)|_{E^s_x}}^{\alpha} <1 \textrm{ and } 1< \frac{m(Df(x)|_{E^c_x})}{\norm{Df(x)|_{E^c_x}}} m(Df(x)|_{E^u_x})^{\alpha}.  
\]
\end{definition}

Observe that this condition is $C^1$-open. For $\alpha>0$, we define $CB^{r,\alpha}_{\leb}(M)$ as the $C^1$-open set of $\alpha$-center bunched, $C^r$-diffeomorphisms inside $\SPV$. If $\alpha=1$ we just write $CB^r_{\leb}(M):= CB^{r,1}_{\leb}(M)$. We now state the precise statement of theorem \ref{thm.theorema}

\begin{theorema}
If $r=1+\alpha $, for $\alpha\in (0,1)$, it exists a $C^1$-dense and $C^r$-open subset of $CB^{r,\alpha}_{\leb}(M)$ such that for any diffeomorphism $f$ in this subset it holds that $L(f)>0$. If $r\geq 2$, then the same result holds inside $CB^{r}_{\leb}(M)$.
\end{theorema}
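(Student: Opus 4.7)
The plan is to follow the Bonatti--Gómez-Mont--Viana / Avila--Viana blueprint, adapted to a skew product over an Anosov base with two-dimensional conservative fibers. I would identify a $C^1$-open (on $C^r$-bounded subsets) subset $\mathcal{U}\subset CB^{r,\alpha}_{\leb}(M)$ characterized by a \emph{pinching} and \emph{twisting} condition at a periodic point of the base $\tf$, and prove separately that (a) $L(f)>0$ throughout $\mathcal{U}$, and (b) $\mathcal{U}$ is $C^1$-dense.

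For step (a), $\alpha$-center bunching guarantees Hölder stable and unstable holonomies between center fibers for the derivative cocycle $Df|_{E^c}$, a two-dimensional $SL$-valued cocycle over $\tf$ (Lemma~\ref{l.anosov}). I would declare $f\in\mathcal{U}$ when there is a periodic orbit $\tp$ of $\tf$ of period $k$ such that $f^k_{\tp}\in \Dif^r_{\leb}(S)$ has a hyperbolic periodic saddle $q\in S_{\tp}$ (pinching), and there is a loop of stable/unstable holonomies based at $(\tp,q)$ whose induced projective action does not fix the stable/unstable eigendirections of $Df^k(\tp,q)|_{E^c}$ (twisting). Both conditions are $C^1$-open. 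If $L(f)=0$ then $\lambda_c^\pm=0$ Lebesgue a.e., and the Avila--Viana invariance principle applied to the projectivized derivative cocycle produces an $f$-invariant probability on $M\times \mathbb{P}(E^c)$ projecting to $\leb$ whose conditional measures are invariant under both $s$- and $u$-holonomies. Pinching forces the conditionals along the orbit of $(\tp,q)$ to be supported on the two eigendirections of $Df^k(\tp,q)|_{E^c}$ at $q$; twisting then produces a holonomy loop carrying this support off itself, contradicting invariance. Hence $L(f)>0$ on $\mathcal{U}$, and $C^r$-openness of the final subset is inherited from the $C^1$-openness of the defining conditions.

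For step (b), given $f\in CB^{r,\alpha}_{\leb}(M)$ and $\varepsilon>0$, pick any periodic point $\tp$ of $\tf$ of period $k$. The fiber map $f^k_{\tp}\in \Dif^r_{\leb}(S)$ is a conservative $C^r$ surface diffeomorphism, so by Liang--Yang~\cite{LY17} there is a $C^1$ $\varepsilon$-perturbation of it with positive Lyapunov exponents on a set of positive measure, hence, by Pesin theory, with a hyperbolic saddle $q\in S_{\tp}$. Using a bump function along the base direction I would lift this perturbation to a skew-product $C^1$-perturbation $g$ of $f$ supported in a tubular neighborhood of $S_{\tp}$, preserving both $\leb$ and the fibration, thereby securing pinching at $(\tp,q)$. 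A further $\varepsilon$-small perturbation localized near a transverse homoclinic point of $\tp$ in the base precomposes one holonomy leg with an arbitrary fiber diffeomorphism of $S$, which for a generic choice realizes twisting without disturbing pinching. Hence $g\in\mathcal{U}$, and $\mathcal{U}$ is $C^1$-dense.

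The principal obstacle is the density step: every perturbation must simultaneously respect volume, preserve the fibration, and stay inside the $\alpha$-center-bunching regime. The Liang--Yang perturbation is only $C^1$, so the ambient $C^r$ norm is not controlled; this is tolerable because pinching and twisting are $C^1$-open on $C^r$-bounded pieces, and by supporting the perturbation in a narrow tube the $C^r$ norm can be kept bounded. The twisting perturbation is the more delicate ingredient: it must alter a single holonomy leg without moving the periodic orbit $(\tp,q)$ or its fiber eigendirections, which forces the support to be placed inside a heteroclinic rectangle transverse to both the center-stable and center-unstable sets of $(\tp,q)$.
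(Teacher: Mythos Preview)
Your overall strategy follows the paper's blueprint (invariance principle, pinching, twisting, Liang--Yang for density), but there is a genuine gap in how you formulate pinching and twisting.

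You propose pinching and twisting at a \emph{single} periodic point $(\tp,q)\in M$, with $q$ a hyperbolic saddle of $f^k_{\tp}$, together with a holonomy ``loop'' based at $(\tp,q)$. This mimics the linear-cocycle case, where the base is $\tM$ and the holonomy along a homoclinic orbit of $\tp$ genuinely returns to $\tp$. Here, however, the projectivized derivative cocycle lives over the partially hyperbolic map $f:M\to M$. The $su$-holonomy path starting at $(\tp,q)$---unstable holonomy to the fiber over a homoclinic point $\tz$ of $\tp$, iterate, then stable holonomy back to $S_{\tp}$---lands at $(\tp,h(q))$, where $h=h^s_{\tf^i(\tz),\tp}\circ f^i_{\tz}\circ h^u_{\tp,\tz}$ is the induced \emph{nonlinear} map on $S_{\tp}$. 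Generically $h(q)\neq q$, so there is no loop; equivalently, $W^{ss}(\tp,q)\cap W^{uu}(\tp,q)=\{(\tp,q)\}$. Even granting continuity of the $su$-state disintegration, the conditional at $(\tp,h(q))$ need not be supported on any distinguished pair of directions, since $h(q)$ is typically neither periodic nor non-uniformly hyperbolic for $f^k_{\tp}$. The contradiction you sketch therefore does not go through.

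The paper's fix is to make pinching \emph{measure-theoretic}: one asks that $\int_{S_{\tp}}\lambda^+(Df^k_{\tp},t)\,d\mu^c_{\tp}(t)>0$ (which is exactly what Liang--Yang already delivers---you discard this information by passing to a single saddle via Katok). Then $NUH_{\tp}\subset S_{\tp}$ has positive $\mu^c_{\tp}$-measure, and since $h$ preserves $\mu^c_{\tp}$, Poincar\'e recurrence supplies density points $t$ of a compact $K\subset NUH_{\tp}$ with $h^j(t)$ again a density point of $K$. Twisting (Definition~\ref{def.twisting}) becomes the positive-measure condition that $(H_t)^j_*\{e^+(t),e^-(t)\}$ misses $\{e^+(h^j(t)),e^-(h^j(t))\}$, and the contradiction with the $su$-state (Proposition~\ref{p.p+t}) uses only almost-everywhere holonomy invariance of the disintegration. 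A secondary consequence: since these conditions are not themselves open, $C^r$-openness of $\{L>0\}$ is obtained indirectly, through closedness of $su$-states under limits (Theorem~\ref{t.closedu}), rather than from $C^1$-openness of the defining conditions as you propose.
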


\subsection{Hyperbolic homeomorphisms and restatement of theorem \ref{teo.B}}\label{s.hyphom}
\label{subsection.hyphomeo}
Let $\Sigma$ be a compact metric space with no isolated points. A homeomorphism $\sigma:\Sigma\to \Sigma$ is called \emph{hyperbolic} if for some $\epsilon>0$, for any $\hx\in \Sigma$, there exist \emph{local stable} and \emph{unstable sets} of $\hx$ with respect to $\sigma$ defined by 
\begin{align*}
W^{s}_{loc}\left(\hy\right) & =\lbrace \hx, \d(\sigma^k(\hx),\sigma^k(\hy))<\epsilon\text{ for every }k\geq 0\rbrace \text{ and }\\
W^{u}_{loc}\left(\hy\right) & =\lbrace \hx: \d(\sigma^k(\hx),\sigma^k(\hy))<\epsilon\text{ for every }k\leq 0\rbrace.
\end{align*}
such that there exist $0<\lambda<1$ and $\tau>0$ with the properties
\begin{itemize}
\item[(i)] $\d(\sigma^n(\hy_1),\sigma^n(\hy_2)) \leq \lambda^n \d(\hy_1,\hy_2)$ for any
      $\hy \in \Sigma$, $\hy_1, \hy_2 \in W^s_{loc} (\hy)$ and $n \geq 0$;
\item[(ii)] $\d(\sigma^{-n}(\hy_1), \sigma^{-n}(\hy_2)) \leq \lambda^n \d(\hy_1,\hy_2)$ for any
      $\hy \in \Sigma$, $\hy_1, \hy_2 \in W^u_{loc} (\hy)$ and $n \geq 0$;
\item[(iii)] if $\d(\hx, \hy)\leq\tau$, then $W^u_{loc}(\hx)$ and $W^s_{loc}(\hy)$ intersect in a
      unique point, which is denoted by $[\hx,\hy]$ and depends continuously on $\hx$ and $\hy$.
\end{itemize}
Anosov diffeomorphisms, Markovian shifts, non trivial hyperbolic atractors and horseshoes are examples of hypebolic homeomorphisms.
Property (iii) defines a local product structure of $\Sigma$, this means for every $\hx\in \Sigma$ there exists a neighborhood
$\hx \in\cV\subset \Sigma$ such that $[\cdot,\cdot]:W^s_{loc}(\hx)\times W^u_{loc}(\hx)\to \cV$, $(\hy,\hz)\mapsto [\hy,\hz]$ is a homeomorphism. 

\begin{definition}
\label{def.productstructure}
Let $\hmu$ be a $\sigma$-invariant probability measure, we say that $\hmu$ has \emph{product structure} if locally in the product coordinates we can 
write $\hmu=\rho \mu^s\times \mu^u$,  where $\mu^s$ is a measure on $W^s_{loc}(\hx)$, $\mu^u$ is a measure on $W^u_{loc}(\hx)$ and $\rho$ is a positive measurable function. We also assume that $\hmu$ is fully supported.
\end{definition}

The local product structure property is verified by equilibrium states of H\"older potentials, also for the Lebesgue measure for Anosov diffeomorphisms.
Consider the normalized Lebesgue measure on $S$, which by abuse of notation we write $\leb$, and define $\mu=\hmu\times \leb$. Observe that this is a $f$-invariant probability 
measure for every $f\in \SP({\Sigma\times S})$.

Take $\beta>0$, we say that $f$ is \emph{$\beta$-fiber bunched} if for any $\hx\in \Sigma$ it holds
\begin{equation}
\label{eq.fiberbunchedshift}
\frac{\norm{Df_{\hx}}}{m(Df_{\tx})}\lambda^\beta<1 \textrm{ and } \frac{\norm{Df^{-1}_{\hx}}}{m(Df^{-1}_{\hx})} \lambda^{\beta} <1, 
\end{equation} 
where $\|Df_{\hx}\| = \displaystyle \sup_{t\in S_{\hx}} \{ \|Df_{\hx}(t)\|\}$ and $m(Df_{\hx}) = \displaystyle \sup_{t\in S_{\tx}} \{ \|Df_{\hx}(t)^{-1}\|^{-1}\}$. 
Observe that this condition is $C^1$-open. Define $\mathcal{FB}^{r,\beta}_{\leb}(\Sigma \times S)$ as the set of $\beta$-fiber bunched skew products inside $\SP(\Sigma \times S)$. 

\begin{theoremb}
Let $\sigma:\Sigma\to \Sigma$ be a hyperbolic homeomorphism and $\hmu$ a $\sigma$-invariant probability measure with product structure
let $\mu = \tmu \times leb$. For $r>1$ and $\alpha>0$, there exists a $C^r$-open and $C^1$-dense subset of  $\mathcal{FB}^{r,\alpha^2}_{\leb}({\Sigma\times S})\subset \SP({\Sigma\times S})$ 
such that any $f$ belonging to this subset verifies $L(f,\mu)>0.$
\end{theoremb}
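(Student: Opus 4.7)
The plan is to adapt the invariance-principle strategy of Avila--Viana, combined with pinching and twisting at a periodic fiber, to the nonlinear skew-product setting of Theorem~B. The central object is the vertical derivative cocycle $\mathcal{A}(\hx,t) := Df_\hx(t)$, which takes values in $\SL$ since each $f_\hx$ preserves $\leb$. The $\alpha^2$-fiber bunching hypothesis is precisely what is needed so that the projectivization of $\mathcal{A}$ admits stable and unstable holonomies along the foliations of the skew product $F(\hx,t)=(\sigma(\hx),f_\hx(t))$ that project to the stable/unstable foliations of $\sigma$. The exponent $L(f,\mu)$ equals the integral of the top Lyapunov exponent of $\mathcal{A}$ with respect to $\mu=\hmu\times\leb$, and the goal is to make this positive on a $C^r$-open, $C^1$-dense subset of $\mathcal{FB}^{r,\alpha^2}_\leb({\Sigma\times S})$.

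For $C^1$-density, I would fix $f$ and a periodic point $\hp\in\Sigma$ of period $N$. Using the $C^1$-density of non-uniformly hyperbolic area-preserving surface diffeomorphisms (the Lan--Yang result cited in the paper), I would perform a $C^1$-small perturbation of $f$ changing only the map $f_\hp$, so that the return map $f^N_\hp:S\to S$ acquires a hyperbolic periodic point $t_0$. This yields \emph{pinching} at $(\hp,t_0)$: iterating $\mathcal{A}$ along the $F$-orbit of $(\hp,t_0)$ produces a linear map with distinct eigenvalues $\{\lambda,\lambda^{-1}\}$, $|\lambda|>1$, hence two distinct fixed directions in $\proj$. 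Using the density of transverse homoclinic orbits of $\hp$ under $\sigma$, I would then fix an orbit of $F$ leaving $(\hp,t_0)$ along the unstable manifold and returning along the stable manifold, and perform a second $C^1$-small perturbation of $f$ supported in a neighborhood of a single point of this orbit, disjoint from the orbit of $\hp$, so as to achieve \emph{twisting}: the composition of stable and unstable holonomies of the projective cocycle along this heteroclinic path does not preserve the two eigendirections at $(\hp,t_0)$.

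The conclusion that $L(f,\mu)>0$ after these perturbations follows from the invariance principle in the style of Ledrappier--Avila--Viana. Indeed, if $L(f,\mu)=0$, fiber bunching implies that any $F$-invariant probability $m$ on $\Sigma\times S\times \proj$ projecting to $\mu$ admits a disintegration $\{m_{\hx,t}\}$ invariant under both stable and unstable holonomies of the projectivized cocycle. Over the $F$-orbit of $(\hp,t_0)$ the disintegration must concentrate on the two eigendirections of the pinching return map, but the twisting condition forces the holonomies to transport this support off itself---a contradiction. Thus $L(f,\mu)>0$ for the perturbed $f$, and since both perturbations are $C^1$-small, this gives $C^1$-density.

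For $C^r$-openness, I would use that pinching and twisting are $C^1$-open (hence $C^r$-open) conditions on $\mathcal{FB}^{r,\alpha^2}_\leb({\Sigma\times S})$; together with continuity of the top Lyapunov exponent for fiber-bunched cocycles (in the spirit of Backes--Brown--Butler, applied here to the nonlinear base $F$ via the continuous invariant holonomies), this ensures that positivity of $L$ persists in a $C^r$-neighborhood. The step I expect to be the main obstacle is making the invariance-principle and continuity arguments go through in the present nonlinear setting: the base of $\mathcal{A}$ is the skew product $F$, which is only partially hyperbolic with a neutral fiber direction, so the required disintegrations along stable/unstable manifolds of $F$ and the absolute continuity of the corresponding holonomies must be established carefully, using the product structure of $\hmu$ together with the smoothness of $\leb$ along the fibers and the $\alpha^2$ (rather than $\alpha$) bunching to control the H\"older regularity of the fiberwise holonomies.
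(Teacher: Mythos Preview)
Your overall strategy---invariance principle plus pinching and twisting---matches the paper's, but there is a genuine gap in how you implement pinching and twisting. You set these up at a \emph{single} periodic point $(\hp,t_0)$ of the skew product $F$, as in the classical Bonatti--Viana argument for linear cocycles over a hyperbolic base. That argument works because an $su$-state over a hyperbolic base has a \emph{continuous} disintegration, so one may evaluate it at the periodic orbit. Here the base of the derivative cocycle is $F:\Sigma\times S\to\Sigma\times S$, which is only partially hyperbolic with two-dimensional center; the invariance principle (via \cite{ASV13} and \cite{Pol16}) yields $su$-invariance of the disintegration $\{m_{(\hx,t)}\}$ only for $\mu$-almost every $(\hx,t)$, or more precisely for \emph{every} pair $\hx,\hy$ in the same stable set but only for $\mu^c_{\hx}$-\emph{almost every} $t\in S_\hx$. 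The single point $t_0$ has $\mu^c_\hp$-measure zero, and its $su$-saturate still has $\mu$-measure zero, so your contradiction ``the disintegration at $(\hp,t_0)$ must be supported on the two eigendirections, but twisting moves them off'' cannot be drawn. This is exactly why the paper's Definitions~\ref{def.pinching} and~\ref{def.twisting} are measure-theoretic: pinching asks that $\int_{S_\hp}\lambda^+\,d\mu^c_\hp>0$ (obtained from the Liang--Yang theorem, which gives nonzero exponents on a \emph{set of positive measure}, not merely a hyperbolic periodic point), and twisting asks for the transversality condition on a positive-measure set $K\subset NUH_\hp$; the contradiction in Proposition~\ref{p.p+t} is then obtained at $\mu^c_\hp$-almost every $t\in K$.

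Your openness argument also diverges from the paper. Pointwise pinching and twisting are indeed open, but since they do not yield $L>0$ here, their openness is not enough; and the paper's measure-theoretic pinching is \emph{not} manifestly $C^r$-open (it is a Lyapunov-exponent positivity on a fiber). Nor does Backes--Brown--Butler continuity apply directly, since the base $F$ is not hyperbolic. The paper instead proves $C^r$-openness by an indirect argument (Theorem~\ref{th.criterionexponents}): if $f_k\to f$ with $L(f_k,\mu)=0$, each $F_k$ admits only $su$-states; a limit of these is again an $su$-state for $F$ by the closedness result of Section~\ref{s.u-closed} (Theorem~\ref{t.closedu}), contradicting pinching plus twisting for $f$. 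This closedness-of-$u$-states step is the substantive content behind openness and is what your proposal is missing.
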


\section{Holonomies and the invariance principle}
\label{s.holonomies}
In this section we are going to define the key concepts of holonomies that we are going to use. We will also introduce the invariance principle, which has been used many times in the study of Lyapunov exponents for cocycles. 
\subsection{Holonomies} 
Recall that by lemma~\ref{l.anosov}, $\tf:\tM\to \tM$ is an Anosov map, in particular it is a hyperbolic homeomorphism.
Therefore, we will define the following concepts in the topological setting. 

For $\hx\in \Sigma$, we denote $S_\hx$ the fiber over $\hx$. In the topological case, as the space is a product the sub-index is not important but we
use it just to stress that the definitions work for smooth fiber bundles.

We say that $f$ admits \emph{$\alpha$-H\"older stable holonomies} if for every $\hy \in W^s_{loc}(\hx)$ 
there exist functions $h^s_{\hx \hy}:S_{\hx}\to S_{\hy}$ such that
\begin{enumerate}
\item[(a)] $h^{s}_{\sigma^j(\hx),\sigma^j(\hy)}=f^j_{\hy} \circ h^s_{\hx,\hy} \circ ({f^j_{\hx}})^{-1}$ for every $j\geq 1$;
\item[(b)] $h^{s}_{\hx,\hx}=\id$ and $h^s_{\hx,\hy}=h^s_{\tz,\hy}\circ h^s_{\hx,\tz}$, for any $z\in W^{s}_{loc}(\hx)$;
\item[(c)] there exists $L>0$ such that $\d_{\Sigma\times S} (h^s_{\hx,\hy}(t),t) \leq L \d_\Sigma(\hx,\hy)^{\alpha}$ for every $t\in S_{\hx}$;
\item[(d)] $\hx,\hy\mapsto h^s_{\hx,\hy}$ is uniformly continuous in $\lbrace \hx,\hy \in \Sigma: \hy \in W^{s}_{loc}(\hx)\rbrace$.
\end{enumerate}

For $p\in {\Sigma\times S}$ define $Df_c(p):=Df_{\pi(p)}(p)$, where $\pi:{\Sigma\times S}\to \Sigma$ is the natural projection.
We define the local strong stable set of $x\in {\Sigma\times S}$ as the set 
\begin{equation}
\label{eq.strongstableset}
W^{ss}_{loc}(x)=\{h^s_{\hy,\hx}(x),\text{ where }\hx=\pi(x)\text{ and }\hy\in W^s_{loc}(\hx)\}. 
\end{equation}

We remark that while the set $W^s_{loc}(\tilde{x})$ is the strong stable set for $\sigma$ contained in the basis $\Sigma$, the set $W^{ss}_{loc}(x)$ is the strong stable set for the skew product $f$, which is contained in $\Sigma \times S$, see figure \ref{fig.strongstable}. We also remark that $\pi(W^{ss}_{loc}(x)) = W^s_{loc}(\tilde{x})$.

\begin{figure}[h]
\centering
\includegraphics[scale= 0.8]{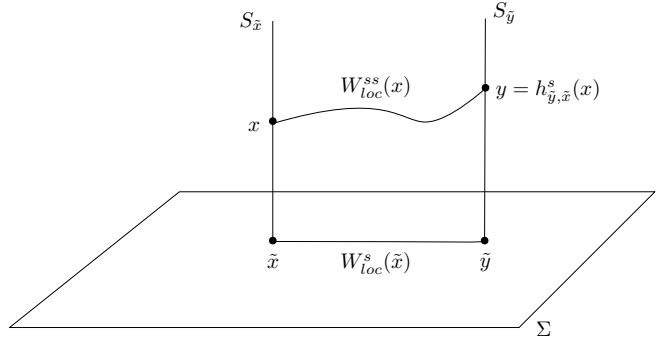}
\caption{The strong stable set}
\label{fig.strongstable}
\end{figure}

Let $TS=\{(\hx,t,v);(\hx,t)\in \Sigma\times S, v\in T_t S_\hx\}$ be the fiber bundle tangent to $S$. For $y\in W^{ss}_{loc}(x)$, we say that $f$ admits \emph{$\alpha$-H\"older linear stable holonomies}, if there exist maps $H^s_{x y}:T_x S_{\hx}\to T_y S_{\hy}$ such that:
\begin{enumerate}
\item[(a)] $H^{s}_{f^j(p),f^j(q)}=Df^j_c(q) \circ H^s_{p,q} \circ (Df^j_c(p))^{-1}$, for every $j\geq 1$;
\item[(b)] $H^{s}_{p,p}=\id$ and $H^s_{p,q}=H^s_{z,q}\circ H^s_{p,z}$, for any $z\in W^{ss}_{loc}(p)$;
\item[(c)] there exists $L>0$ such that $\d_{TS}(H^s_{p,q}(v),v) \leq L \d(p,q)^{\alpha}$;
\item[(d)] $p,q\mapsto H^s_{p,q}$ is uniformly continuous in $\lbrace p,q \in {\Sigma\times S}; q \in W^{ss}_{loc}(p)\rbrace$.
\end{enumerate}

We are going to prove the existence of the holonomies for fiber-bunched skew products of $\SP({\Sigma\times S})$.
\begin{proposition}
\label{prop.holonomyexistence}
If $f$ is $\alpha$-fiber bunched, then for every $\hx\in W^s_{loc}(\hy)$  the limit
$$
\lim_{n\to \infty}{(f^n_\hy)}^{-1}\circ f^n_\hx,
$$
exists and defines an $\alpha$-H\"older stable holonomy.
\end{proposition}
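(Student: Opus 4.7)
My plan is to show that the candidates
\[
h^{(n)}_{\hx,\hy}:=(f^n_\hy)^{-1}\circ f^n_\hx:S_\hx\to S_\hy
\]
form a Cauchy sequence in the $C^0$-topology, with H\"older control by $d_\Sigma(\hx,\hy)^\alpha$, and that the limit $h^s_{\hx,\hy}$ satisfies properties (a)--(d). The cocycle identity (a) and the composition rule (b) already hold at each finite level: property (a) follows from $f^{n+j}_\hy=f^j_{\sigma^n\hy}\circ f^n_\hy$, and (b) reduces to the trivial identity $(f^n_\hy)^{-1}\circ f^n_\tz\circ(f^n_\tz)^{-1}\circ f^n_\hx=(f^n_\hy)^{-1}\circ f^n_\hx$; both pass to the $C^0$-limit. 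Property (d) is a consequence of the uniformity of the convergence estimate, so the heart of the argument is the Cauchy bound together with (c).

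For the telescoping differences, the decomposition $(f^{n+1}_\hy)^{-1}=(f^n_\hy)^{-1}\circ f_{\sigma^n\hy}^{-1}$ and $f^{n+1}_\hx=f_{\sigma^n\hx}\circ f^n_\hx$ combined with the mean value theorem gives
\[
\|h^{(n+1)}_{\hx,\hy}-h^{(n)}_{\hx,\hy}\|_{C^0}\;\leq\;\|D(f^n_\hy)^{-1}\|_{C^0}\cdot\|f_{\sigma^n\hy}^{-1}\circ f_{\sigma^n\hx}-\id\|_{C^0}.
\]
The second factor I bound by writing $f_{\sigma^n\hy}^{-1}\circ f_{\sigma^n\hx}-\id=f_{\sigma^n\hy}^{-1}\circ(f_{\sigma^n\hx}-f_{\sigma^n\hy})$, applying a uniform-in-$\Sigma$ bound on $\|Df_\hx^{-1}\|_{C^0}$, and using the $(H,\alpha)$-H\"older hypothesis on $\hx\mapsto f_\hx$ together with the stable contraction $d_\Sigma(\sigma^n\hx,\sigma^n\hy)\leq \lambda^n d_\Sigma(\hx,\hy)$; this produces a bound of the form $C_1\lambda^{n\alpha}d_\Sigma(\hx,\hy)^\alpha$.

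The decisive step is controlling $\|D(f^n_\hy)^{-1}\|_{C^0}$ through $\alpha$-fiber bunching. Since each $f_\hx\in\Dif^r_\leb(S)$ preserves the area form on the two-dimensional surface $S$, every $Df_\hx(t)$ is a unimodular $2\times 2$ matrix, and hence $\|Df^n_\hy(p)\|=\|Df^n_\hy(p)^{-1}\|$ pointwise. Iterating the chain rule along the $\hy$-orbit and applying the pointwise fiber-bunching inequality, which by continuity on the compact space $\Sigma$ is uniform with some margin $\theta<1$, one gets
\[
\|Df^n_\hy(p)\|^2\;=\;\|Df^n_\hy(p)\|\cdot\|Df^n_\hy(p)^{-1}\|\;\leq\;\prod_{k=0}^{n-1}\frac{\|Df_{\sigma^k\hy}(p_k)\|}{m(Df_{\sigma^k\hy}(p_k))}\;\leq\;\theta^n\lambda^{-n\alpha},
\]
where $p_k=f^k_\hy(p)$, and therefore $\|D(f^n_\hy)^{-1}\|_{C^0}\leq \theta^{n/2}\lambda^{-n\alpha/2}$. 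Combining the two estimates gives
\[
\|h^{(n+1)}_{\hx,\hy}-h^{(n)}_{\hx,\hy}\|_{C^0}\;\leq\;C_1\theta^{n/2}\lambda^{n\alpha/2}\,d_\Sigma(\hx,\hy)^\alpha,
\]
which is summable in $n$, so the sequence converges uniformly to some $h^s_{\hx,\hy}$; summing the telescoping series yields the H\"older bound (c), and the uniformity of the estimate gives (d).

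The step that I expect to need the most care is the pointwise-to-uniform conversion for $\|D(f^n_\hy)^{-1}\|_{C^0}$, since the chain-rule product is pointwise and must be promoted to uniform geometric decay along arbitrary $\hy$-orbits using compactness of $\Sigma$ and continuity of $\|Df_\hx\|/m(Df_\hx)$. The two-dimensional area-preserving hypothesis enters decisively here, collapsing the forward and inverse operator norms into a single quantity controlled by the bunching condition $\|Df_\hx\|/m(Df_\hx)<\lambda^{-\alpha}$; without it, one would need a more delicate estimate separating the two.
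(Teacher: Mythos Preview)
Your argument is correct and follows the same telescoping--Cauchy strategy as the paper: bound $d_{C^0}(h^{(n+1)},h^{(n)})$ by $\|(Df^n_\hy)^{-1}\|_{C^0}$ times the $\alpha$-H\"older variation $C\lambda^{n\alpha}d_\Sigma(\hx,\hy)^\alpha$, then show the product decays geometrically. The paper compresses the passage from the fiber-bunching inequality to $\|(Df^n_\hy)^{-1}\|\lambda^{n\alpha}\leq C\theta^n$ into a single line; you spell it out explicitly, correctly observing that the two-dimensional area-preserving hypothesis is what converts the ratio bound $\|Df_\hx\|/m(Df_\hx)<\lambda^{-\alpha}$ into the needed absolute bound $\|(Df^n_\hy)^{-1}\|\leq\theta^{n/2}\lambda^{-n\alpha/2}$, which is precisely the content of the paper's subsequent remark that the genuine hypothesis is a domination condition rather than fiber bunching per se.
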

\begin{proof}
For $n$ sufficiently large we can identify $S_{\sigma^n(\hx)}$ with $S_{\sigma^n(\hy)}$ using local charts. Define $h^n={(f^n_\hy)}^{-1}\circ f^n_\hx$. We are going to prove that the maps $h^n:S\to S$ form a Cauchy sequence in the $C^0(S)$-topology. Using $(\ref{eq.fiberbunchedshift})$ and that $f_{\tilde{x}}$ varies $\alpha$-H\"older continuously with the base point, we obtain the following estimate
$$
\begin{aligned}
\d(h^{n+1}(t),h^n(t))&=\d\left((f^{n}_\hy)^{-1}\circ f^{-1}_{\tf^n(\hy)}\circ f_{\tf^n(\hx)}\circ f^n_\hx(t),(f^{n}_\hy)^{-1}\circ f^n_\hx(t)\right)\\
&\leq \sup_{t\in S}\norm{(Df^n_\hy(t))^{-1}}\d_{C^0(S)}\left( f^{-1}_{\tf^n(\hy)}\circ f_{\tf^n(\hx)},\id\right)\\
&\leq \sup_{t\in S}\norm{(Df^n_\hy(t))^{-1}} C \lambda^{n\alpha}\d(\hx,\hy)^\alpha\\
&\leq C\theta^n\d(\hx,\hy)^\alpha, 
\end{aligned}
$$
for some $\theta\in (0,1)$. Hence, $(h^n)_{n\in \natural}$ is a Cauchy sequence and converges uniformly in the $C^0$-topology. Thus, we define $h^s_{\hx,\hy}=\lim_{n\to \infty} h^n$. The properties of the holonomy follow directly from the definition of the limit.
\end{proof}
The hypothesis in proposition \ref{prop.holonomyexistence} could be weakened. Indeed, we do not need the $\alpha$-fiber bunching condition, but something weaker that can be seen as a type of ``dominated splitting'' condition (the contraction on the basis is stronger than the contractions on the fibers). Similar considerations also hold for the unstable holonomies.

\begin{remark}
 For the smooth case the holonomy can also be defined by the strong stable foliation restricted to a center stable manifold,
 specifically $h_{\tx \ty}(t)=W^{ss}_{loc}(t)\cap S_\ty$.
\end{remark}

To define the $\alpha$-H\"older linear stable holonomy we first need to find the contraction rate of $f$ in the strong stable set.

Take $(\hx,t)$ and $(\hy,h^s_{\hx \hy}(t))$ in the same strong stable set, then
$$
\begin{aligned}
\d_S(f^n(h^s_{\hx,\hy}(t)),f^n(t))&=\d_S(h^s_{\tf^n(\hx)\tf^n(\hy)} f^n(t),f^n(t))\\
&\leq L\d_{\Sigma}(\tf^n(\hx),\tf^n(\hy))^\alpha \\
&\leq L \lambda^\alpha \d_{\Sigma}(\hx,\hy)^\alpha \\
&\leq L \lambda^\alpha \d_{\Sigma\times S}((\hx,t),(\hy,h^s_{\hx \hy}(t)))^\alpha
\end{aligned}
$$

so the contraction rate is at least $\lambda^\alpha$.

\begin{proposition}
\label{prop.existencelinearholonomies}
If $f$ is $\alpha^2$-fiber bunched, then for every $x\in W^{ss}_{loc}(y)$ the limit 
$$
\lim_{n\to \infty}{(Df^n_c(y))}^{-1}\circ Df^n_c(x),
$$
exists and defines an $\alpha$-H\"older linear stable holonomy.
\end{proposition}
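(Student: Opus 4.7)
The plan is to mimic the argument of Proposition~\ref{prop.holonomyexistence}, but now at the linearized level: I would show that the sequence of linear maps
$$
H^n_{x,y}:=(Df^n_c(y))^{-1}\circ Df^n_c(x):T_x S_{\hx}\to T_y S_{\hy}
$$
is Cauchy in the operator norm whenever $y\in W^{ss}_{loc}(x)$, and take its limit as the desired linear stable holonomy.

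First I would write the telescoping difference
$$
H^{n+1}_{x,y}-H^n_{x,y}=(Df^n_c(y))^{-1}\bigl[(Df_c(f^n(y)))^{-1}Df_c(f^n(x))-\id\bigr]Df^n_c(x)
$$
and estimate its three factors separately. The middle factor is controlled by the $\alpha$-H\"older regularity of $p\mapsto Df_c(p)$ (which holds since $f$ is $C^{r,\alpha}$ with $r\ge 1+\alpha$), combined with the strong stable contraction $\d(f^n(x),f^n(y))\le L\lambda^{n\alpha}\d_\Sigma(\pi(x),\pi(y))^\alpha$ computed just above the statement. This yields a bound of order $\lambda^{n\alpha^2}\d_\Sigma(\pi(x),\pi(y))^{\alpha^2}$ for the middle bracket.

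Next I would bound the outer factor $\norm{(Df^n_c(y))^{-1}}\cdot\norm{Df^n_c(x)}$ by the telescoping product $\prod_{k=0}^{n-1}\norm{Df_c(f^k(x))}/m(Df_c(f^k(y)))$. Since the two orbits stay exponentially close and $Df_c$ is H\"older, I can replace $m(Df_c(f^k(y)))$ by $m(Df_c(f^k(x)))$ up to a multiplicative factor $\exp\bigl(C\sum_k\lambda^{k\alpha^2}\bigr)$, which is uniformly bounded. The $\alpha^2$-fiber bunching inequality $\norm{Df_{\hx}}/m(Df_{\hx})<\lambda^{-\alpha^2}$ then provides a uniform estimate of the form $C_1\gamma^n\lambda^{-n\alpha^2}$ for some $\gamma\in(0,1)$. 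Multiplying with the previous bound gives
$$
\norm{H^{n+1}_{x,y}-H^n_{x,y}}\le C_2\gamma^n\d_\Sigma(\pi(x),\pi(y))^{\alpha^2},
$$
which is summable, so $H^s_{x,y}:=\lim_n H^n_{x,y}$ exists.

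Having convergence, properties (a) and (b) of a linear stable holonomy fall out directly from the defining formula and the chain rule; property (d) is a consequence of uniform convergence on compacta together with continuity of $Df_c$; and the H\"older bound (c) is obtained by summing the telescoping estimate. I expect the main obstacle to be the careful bookkeeping in the second step: one must compare the product of operator norms taken along the two distinct orbits $f^k(x)$ and $f^k(y)$ using H\"older continuity of $Df_c$, and ensure that the decay $\lambda^{n\alpha^2}$ produced by the middle factor precisely beats the bunching-controlled growth $\gamma^n\lambda^{-n\alpha^2}$ of the outer factors. This matching is exactly why the \emph{quadratic} exponent $\alpha^2$ appears in the fiber bunching hypothesis, whereas only $\alpha$ was needed for the nonlinear holonomy.
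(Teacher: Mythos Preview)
Your proposal is correct and follows essentially the same route as the paper: the same telescoping identity, the same use of $\alpha$-H\"older regularity of $Df_c$ combined with the contraction rate $\lambda^{\alpha}$ along $W^{ss}_{loc}$ to produce the factor $\lambda^{n\alpha^2}$, and the same use of $\alpha^2$-fiber bunching to control $\norm{(Df^n_c(y))^{-1}}\norm{Df^n_c(x)}$. The only cosmetic difference is in the treatment of the outer factor: the paper invokes openness of the bunching condition to get directly $\norm{(Df_c(y'))^{-1}}\norm{Df_c(x')}\lambda^{\alpha^2}<\theta$ for nearby $x',y'$, whereas you first transfer $m(Df_c(f^k(y)))$ to $m(Df_c(f^k(x)))$ via H\"older continuity and then apply bunching along a single orbit---these are equivalent formulations of the same estimate.
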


\begin{proof}
For $n$ sufficiently large we can identify $T_{f^n(x)}S_{\sigma^n(\hx)}$ with $T_{f^n(y)}S_{\sigma^n(\hy)}$ using local charts, also observe that  as $r-1\geq\alpha$ there exists $C>0$ such that
$$
\begin{aligned}
\norm{(Df^n_\ty(t))-(Df^n_\tx(t'))}&\leq H \d_\Sigma(\tx,\ty)^\alpha+\norm{f}_{C^{\min(r,2)}} \d_S(t,t')^{\min(r-1,1)}\\
&\leq C \d_{\Sigma\times S}((\ty,t)(\tx,t'))^\alpha.
\end{aligned}
$$

Define $H^n={(Df^n_c(y))}^{-1}\circ Df^n_c(x)$. We are going to prove that $(H^n)_{n\in \natural}$ is a Cauchy sequence.
$$
\begin{aligned}
\norm{H^{n+1}-H^n}&=\norm{(Df_c^{n+1}(y))^{-1}\circ Df_c^{n+1}(x)-(Df_x^{n}(y))^{-1}\circ Df_c^n(y)}\\
&\leq \norm{(Df_c^n(y))^{-1}}\norm{ (Df_c(f^n(y)))^{-1} \circ Df_c(f^n(x))-\id}\norm{(Df_c^n(x))}\\
&\leq C \norm{(Df_c^n(y))^{-1}}\norm{(Df_c^n(x))} \d(f^n(x),f^n(y))^\alpha\\
&\leq \norm{(Df_c^n(y))^{-1}}\norm{(Df_c^n(x))} \lambda^{\alpha^2 n}\d(x,y)^\alpha.
\end{aligned}
$$
Observe that the $\alpha^2$-fiber bunching condition is open, in particular, there exist $\theta \in (0,1)$ and $\delta>0$ such that if $\d(x',y')<\delta$, then 
$$\norm{(Df_c(y'))^{-1}}\norm{(Df_c(x'))} \lambda^{\alpha^2}<\theta.$$
Therefore, for $j$ sufficiently large
$$\norm{Df_c(f^j(y))^{-1}}\norm{Df_c(f^j(x))} \lambda^{\alpha^2}<\theta.$$
Thus, 
 $$
 \norm{H^{n+1}-H^n}\leq C\theta^n \d(x,y)^\alpha.
 $$
The sequence $(H^n)_{n\in \natural}$ is a Cauchy sequence and converges uniformly. Define $H^s_{x,y}=\lim_{n\to \infty} H^n$. The properties of the holonomy follow directly from the definition of the limit.
\end{proof}
\begin{remark}
 For the $C^{1+\alpha}$-diffeomorphism case as the $\alpha$-center bunching condition takes in account the contraction over the strong stable direction
the argument works with $\alpha$-center bunching instead of $\alpha^2$-fiber bunched.
\end{remark}

\begin{remark}
In the $C^{1+\alpha}$-diffeomorphism case, the existence of the linear holonomies, given by proposition \ref{prop.existencelinearholonomies}, does not imply that the holonomy maps $h^s$ are $C^1$. Indeed, to conclude that the maps $h^s$ are $C^1$, one needs to prove that the sequence $(h^n)_{n\in \natural}$ is Cauchy for the $C^1$-topology, where this sequence was defined in the proof of proposition \ref{prop.holonomyexistence}. The calculation becomes more delicate and in fact one needs a stronger bunching condition, see \cite{Bro16}.
\end{remark}
\begin{remark}\label{rem.continuity.holon}
Observe that the convergence in propositions \ref{prop.holonomyexistence} and \ref{prop.existencelinearholonomies} is uniform in sets with bounded H\"older constant. 

This implies that the holonomies $h^s$ and $h^u$ vary continuously with $f$ with respect to the $C^0$ topology in subsets with bounded H\"older constant, and the linear holonomies $H^s$ and $H^u$ vary continuously with $f$ with respect to the $C^1$ topology on subsets on subsets with bounded H\"older constant of $Df_c$. 

In particular in the random product case the holomies varies continuously with $f_1,\cdots,f_k$ in the $C^1$ topology.
\end{remark}
Analogously we define the unstable and linear unstable holonomies as the stable and linear stable holonomies for $f^{-1}$.

\subsection{The invariance principle and criterion for the existence of positive exponents}
Let $f\in \mathcal{FB}^{r, \alpha^2}_{leb}(\Sigma \times S)$ be a skew product over a  hyperbolic homeomorphism $\sigma : \Sigma \to \Sigma$. In \cite{Bow70}, Bowen proved that a hyperbolic homeomorphism is semi-conjugated to a subshift of finite type. In particular, the set of periodic points of $\sigma$ is non empty. Also using the symbolic dynamics associated to $\sigma$, it holds that for any periodic point $\hp$ it exists $\hz \in \Sigma$ such that $\hz \in W^u_{loc}(\hp) \cap \sigma^{-i}(W^s_{loc}(\hp)) -\{\hp\}$, for some $i\in \natural$.

\begin{definition}[Pinching]
\label{def.pinching}
We say that the cocycle $(f,Df_c)$ is \emph{pinching} if there exist a periodic point $\hp\in \Sigma$, for $\sigma$, such that the cocycle $(f^\kappa_{\hp},Df^\kappa_{\hp})$ verifies
\[
\int_{S_{\hp}} \lambda^+(Df^\kappa_{\hp},t) d\mu^c_{\hp} (t)>0,
\]
where $\kappa$ is the period of $\hp$.
\end{definition}

Let $\tp$ be a periodic point for $\sigma$ of period $\kappa$. Consider $NUH_{\hp} \subset S_{\hp}$ to be the set of points $t$ inside $S_{\hp}$ such that $\lambda^+(Df^\kappa_{\hp}, t )>0$. Let $PS_{\hp}$ be the projectivization of the tangent bundle $TS_{\hp}$. By Oseledets theorem, on $NUH_{\hp}$ there exists a measurable function $t \mapsto  (e^u(t),e^s(t)) \in PS_{\hp} \times PS_{\hp},$ where $e^u(t)$ is the Oseledets space corresponding to the positive Lyapunov exponent and $e^s(t)$ is the Oseledets space corresponding to the negative exponent.

\begin{definition}[Twisting]
\label{def.twisting}
Let $(f, Df_c)$ be a pinching cocycle for the periodic point $\hp\in \Sigma$. We say that the cocycle is \emph{twisting} (see figure~\ref{fig.twiting}) if there exist $j\in \natural$, $\hz\in \tilde{W}^u_{loc}(\hp)\cap \left(\tf^{-i}(\tilde{W}^s_{loc}(\hp))-\{\hp\}\right)$ and a set $K\subset NUH_{\tp}$ such that $\mu^c_\hp(K)>0$ and $K$ verifies the following: 
 $$(H_t)_*^j\left(\lbrace e^+(t),e^-(t) \rbrace\right) \cap \lbrace e^+(h^j(t)),e^-(h^j(t)) \rbrace=\emptyset,$$
where 
\begin{itemize}
\item[--]$h:S_{\hp}\to S_{\hp}$ is the composition $h^s_{\sigma^i(\hz),\hp}\circ f^i_\hz \circ h^u_{\hp,\hz}$;
\item[--] $H_t:T_tS_{\hp}\to T_{h(t)}S_{\hp}$ is the composition $H^s_{f^i(t_{\hz}),h(t)}\circ Df^i_\hz(t_{\hz})\circ H^u_{t,t_{\hz}}$, where $t_{\hz}=h^u_{\hp,\hz}(t)$;
\item[--] $(H_t)^j=H_{h^{j-1}(t)}\circ\dots\circ H_{t}$;
\item[--]$(H_t)_*^j$ is the action induced by $H_t^j$ on the projective bundle $PS_{\hp}$.
\end{itemize} 
\end{definition}

\begin{figure}[ht]
    \centering
    \includegraphics[scale=0.4]{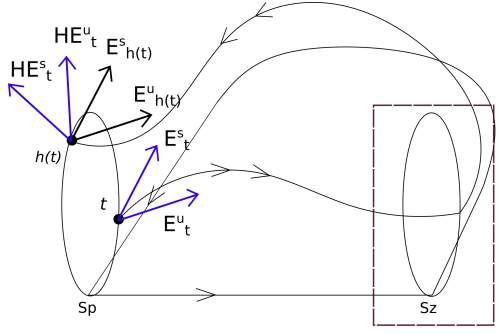}
    \caption{Twisting}
\label{fig.twiting}
   \end{figure}

These two definitions combined with the invariance principle gives us a criterion (theorem~\ref{th.criterionexponents}) for the existence of positive exponents for a open set of $\SP(\Sigma\times S)$.
Let us explain how this follows.

Recall that $TS=\{(\hx,t,v);(\hx,t)\in \Sigma\times S, v\in T_t S_\hx\}$ is the fiber bundle tangent to $S$, the derivative cocycle $Df_c:TS\to TS$  induce an action on the projective fiber bundle $PS$, which we will denote it by $F:PS\to PS$. 

For a probability measure $m$ on $PS$, that projects on $\mu$, we write $m_x$ the disintegration of $m$ with respect to the projective fibers $P_x S_\hx$.

A $F$-invariant measure $m$ is an \emph{$u$-state} if there exists a set of full $\mu$-measure, $M'\subset M$, such that for every $x,y\in M$, with $y\in W^{uu}(x)$, we have that $m_y={H^u_{x, y}}_*m_x$. One defines a $s$-state analogously, replacing the roles of unstable by stable holonomies. If $m$ is both $s$ and $u$ state, we call it a $su$-state. Using results from \cite{Pol16}, we have the following proposition.
\begin{proposition}\label{p.p+t}
If $(f,Df_c)$ pinching and twisting then it does not admit a $su$-state projecting on $\mu$.
\end{proposition}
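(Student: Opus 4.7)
The plan is a proof by contradiction. Suppose an $su$-state $m$ exists on $PS$ with $\pi_*m=\mu=\tmu\times\leb$, and write $\{m_x\}_{x\in\Sigma\times S}$ for its disintegration along the projective fibers. The strategy, in the spirit of the invariance principle of Avila--Viana, is to analyze the family $t\mapsto m_{(\hp,t)}$ on the fiber over the periodic point $\hp$ supplied by pinching and twisting, and to combine these two hypotheses to obtain an incompatibility between holonomy invariance and hyperbolic rigidity.

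The first step is to show that $(H_t)^j_*m_{(\hp,t)}=m_{(\hp,h^j(t))}$ for $\mu_\hp^c$-almost every $t\in S_\hp$, where $h$ and $H_t$ are the composite maps from Definition~\ref{def.twisting}. I would obtain this by concatenating three invariances, using that $m$ is both $u$- and $s$-state: the $u$-state property transports the disintegration along the unstable holonomy from $\hp$ to $\hz$; the $F$-invariance of $m$ transports it along the orbit segment of length $i$ from $\hz$ to $\sigma^i(\hz)\in W^s_{loc}(\hp)$; and the $s$-state property closes the loop from $\sigma^i(\hz)$ back to $\hp$. Iterating $j$ times gives the claim for $(H_t)^j$.

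The second step uses pinching: the set $NUH_\hp$ has positive $\mu_\hp^c$-measure, and for $t$ in it the projective action of $Df_\hp^\kappa(t)$ on $\mathbb{P}T_tS_\hp$ is hyperbolic with attracting fixed point $e^+(t)$ and repelling fixed point $e^-(t)$. From the equivariance $m_{(\hp,f_\hp^\kappa(t))}=(Df_\hp^\kappa(t))_*m_{(\hp,t)}$ together with iteration forwards and backwards, a Ledrappier-type argument shows that $m_{(\hp,t)}$ must be a convex combination $a(t)\delta_{e^+(t)}+(1-a(t))\delta_{e^-(t)}$ for $\mu_\hp^c$-a.e.\ $t\in NUH_\hp$; in particular $\supp m_{(\hp,t)}\subset\{e^+(t),e^-(t)\}$.

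For the conclusion, take the positive-measure set $K\subset NUH_\hp$ from the twisting hypothesis. Applying the second step at the point $h^j(t)$ forces $\supp m_{(\hp,h^j(t))}\subset\{e^+(h^j(t)),e^-(h^j(t))\}$. On the other hand, combining the first step with the support description of $m_{(\hp,t)}$ gives $\supp m_{(\hp,h^j(t))}\subset(H_t)^j_*\{e^+(t),e^-(t)\}$. By Definition~\ref{def.twisting}, these two finite subsets are disjoint for every $t\in K$, so the common support is empty and $m_{(\hp,h^j(t))}$ would have zero total mass, contradicting the fact that it is a probability measure. The main obstacle I expect is the first step: the $u$- and $s$-state properties are almost-everywhere statements along the strong stable and unstable foliations of the skew product, and one must carefully track how these almost-sure sets intersect the fiber over $\hp$ together with the finite orbit segment $\hz,\dots,\sigma^i(\hz)$ used in the twisting construction, which is precisely the technical content attributed to \cite{Pol16}.
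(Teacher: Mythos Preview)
Your proposal is correct and follows essentially the same route as the paper: assume an $su$-state exists, use the $su$-invariance of the disintegration to get $(H_t)^j_*m_{(\hp,t)}=m_{(\hp,h^j(t))}$ for $\mu^c_\hp$-a.e.\ $t$ (this is exactly the content the paper imports from \cite[Proposition~7.1]{Pol16}, which you correctly flag as the delicate technical step), use pinching to force $m_{(\hp,t)}=a(t)\delta_{e^+(t)}+(1-a(t))\delta_{e^-(t)}$ on $NUH_\hp$, and then contradict twisting. The paper's write-up is terser but structurally identical.
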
 
\begin{proof}
Suppose that there exists $m$ a $F$-invariant measure that is a $su$-state. Observe that the holonomies $h$ preserves volume on $S$.

By Proposition~7.1 of \cite{Pol16} there exist a disintegration that is $su/c$-invariant, this means that for every $\hx,\hy\in \Sigma$ in the same stable set for $\mu^c_\hx$-almost every $t\in S_\hx$
\begin{equation}\label{eq.su/c}
(H^s_{t,h^s_{\hx,\hy}(t)})_*m_t=m_{h^s_{\hx,\hy}(t)}
\end{equation}
and the same property changing stable by unstable.

The pinching condition implies that for $\mu^c_\hp$-almost every $t\in NUH_\hp$, $m_t=a(t) \delta_{e^u(t)} +b(t) \delta_{e^s(t)}$, where $a(t),b(t)\in \real^+$ with $a(t)+b(t)=1$. Now \eqref{eq.su/c} implies that for $\mu^c_\hp$-almost every $t\in NUH_\hp$ and for every $j\in \integer$ 
$$a(t) \delta_{H_t^j e^u(t)} +b(t) \delta_{ H_t^j e^s(t)}=a(h^j(t)) \delta_{e^u(h^j(t))} +b(h^j(t)) \delta_{e^s(h^j(t))}$$
this contradicts the twisting condition.
\end{proof}

\begin{lemma}\label{lem.closedu}
If $m^k$ are $u$-states for $F_k$, that projects to $\mu$ such that $F_k \to F$ and $m^k\to m$ in the weak*-topology, then $m$ is an $u$-state.
\end{lemma}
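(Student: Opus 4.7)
The plan is to verify the three defining conditions of a $u$-state for the limit measure $m$: (i) $F$-invariance; (ii) $\pi_* m = \mu$; and (iii) the equivariance $m_y = (H^u_{x,y})_* m_x$ of the disintegration for $\mu$-a.e.\ pair $(x,y)$ with $y \in W^{uu}(x)$.

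For (i) and (ii) the argument is a routine weak-$*$ argument. Convergence in the skew-product space forces $C^1$ convergence of fiber maps, so the induced projective cocycle maps $F_k$ converge to $F$ uniformly on $PS$. Hence for any continuous $\phi\colon PS\to\real$, $\phi\circ F_k\to\phi\circ F$ uniformly, and combining $F_k$-invariance of $m^k$ with $m^k\to m$ gives
\begin{equation*}
\int \phi\circ F\,dm \;=\; \lim_k \int \phi\circ F_k\,dm^k \;=\; \lim_k \int \phi\,dm^k \;=\; \int \phi\,dm,
\end{equation*}
so $m$ is $F$-invariant. Continuity of the bundle projection then yields $\pi_*m=\lim_k\pi_*m^k=\mu$.

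The main part is (iii), and the obstacle is that the disintegration $\{m_x\}$ is only defined $\mu$-almost everywhere, so the pointwise holonomy equivariance cannot be tested directly against the weak-$*$ topology. My plan is to recast the condition as a family of continuous integral identities that are manifestly preserved under weak-$*$ limits. Fix a product chart $\tilde{U}\subset\Sigma$ on which $\tilde{\mu}=\rho\,\mu^s\otimes\mu^u$ for a continuous positive density $\rho$, and let $\mu_{uu}(\cdot\mid x)$ denote the resulting conditional measure of $\mu$ along the strong unstable plaque of $x$. Via Fubini and the local product decomposition, one can show that $m^k$ is a $u$-state if and only if, for every triple $(\chi,\psi,\phi)$ in a countable dense family of continuous test functions (with $\psi$ supported near the diagonal of $W^u_{loc}\times W^u_{loc}$),
\begin{equation*}
\int \chi(x)\,\psi(x,y)\,\phi\!\left(H^{u,k}_{x,y}(v)\right)\,d\mu_{uu}(y\mid x)\,dm^k(x,v) \;=\; \int \chi(x)\,\psi^\sharp(x,y)\,\phi(v)\,d\mu_{uu}(y\mid x)\,dm^k(x,v),
\end{equation*}
where $\psi^\sharp$ is a continuous modification of $\psi$ obtained by swapping the two unstable coordinates (absorbing factors of $\rho$) and $H^{u,k}$ is the unstable linear holonomy associated to $F_k$.

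By Remark~\ref{rem.continuity.holon}, $H^{u,k}\to H^u$ uniformly on sets with bounded H\"older constant (these bounds being inherited from the convergence $f_k\to f$ in the fixed-H\"older-constant topology of $\SP({\Sigma\times S})$), so $\phi\circ H^{u,k}\to \phi\circ H^u$ uniformly on compact subsets of $PS$. Combined with $m^k\to m$ in the weak-$*$ topology and continuity of the remaining factors, both sides of the above identity pass to the limit to yield the corresponding identity for $m$. Applying this to a dense family of test tuples then characterizes $m$ as a $u$-state. The essential technical step is the reformulation into the continuous integral identities above; the passage to the limit is routine once it is in place.
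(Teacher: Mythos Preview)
Your overall strategy---rewriting the $u$-state condition as a family of integral identities stable under weak-$*$ limits---is the right idea and is essentially what the paper does. But you have misidentified where the real work lies: the ``reformulation into continuous integral identities'' is exactly the hard step, and the passage to the limit is \emph{not} routine as written.

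There are two concrete problems. First, the density $\rho$ in the local product structure of $\hat\mu$ is only assumed measurable (Definition~\ref{def.productstructure}), not continuous. Your $\psi^\sharp$, which has to absorb ratios of $\rho$ coming from the change of variables on unstable plaques, is therefore not continuous, and neither is the test function $(x,v)\mapsto \int \psi(x,y)\,\phi(H^{u}_{x,y}(v))\,d\mu_{uu}(y\mid x)$ that you need to integrate against $m^k$. So weak-$*$ convergence of $m^k$ alone does not let you pass to the limit. Second, the strong unstable plaque $W^{uu}_{loc}(x)$ through $x$ is built from the nonlinear holonomies $h^{u,k}$ (see~\eqref{eq.strongstableset}), so both the set over which you integrate $y$ and the conditional measure $\mu_{uu}(\,\cdot\mid x)$ depend on $k$. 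Your identity therefore mixes a $k$-dependent domain of integration with the $k$-dependent linear holonomy $H^{u,k}$, and you have not explained why the combined object converges.

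The paper handles both issues by pulling everything back through foliated charts $\Theta_k:H\times V\to M$ adapted to $f_k$, so that (i) the $u$-state condition becomes the $\cB_0$-measurability of the disintegration of a pushed measure $\tm^k$ on the fixed product $H\times V\times\cE$, and (ii) the pulled-back reference measures satisfy $\eta^k=\rho_k\eta$ with $\rho_k$ uniformly bounded above and below (Lemmas~\ref{l.volpres} and~\ref{l.hyphom}). The key technical input is then an adaptation of \cite[Lemma~A.3]{Pol16} allowing one to test $\hat m^k\to\hat m$ against functions that are merely measurable in the base variable and continuous in the fiber; this is what replaces your unjustified continuity assumption on $\rho$ and is precisely what makes the limit go through. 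Your sketch would become a correct proof only after carrying out this same reduction, at which point it coincides with the paper's argument.
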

The proof of this lemma is very technical and also of independent interest. We prove it in more generality in section section~\ref{s.u-closed}, see theorem \ref{t.closedu}.

\begin{theorem}
\label{th.criterionexponents}
Let $\sigma: \Sigma \to \Sigma$ be a hyperbolic homeomorphism, $\hat{\mu}$ be a $\sigma$-invariant measure with product structure and let $f\in \mathcal{FB}^{1+\alpha,\alpha^2}_{leb}(\Sigma\times S)$. If $(f,Df_c)$ is pinching and twisting then $L(f,\mu)>0$, for $\mu = \hat{\mu} \times \leb$.  Moreover, there exist a neighborhood of $f$ in the $C^r$-topology with positive integrated Lyapunov exponents. 
\end{theorem}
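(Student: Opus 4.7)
The plan is to argue by contradiction using the invariance principle together with Proposition~\ref{p.p+t}. First I would address the claim $L(f,\mu)>0$. Suppose, for contradiction, that $L(f,\mu)=0$; since $f$ acts on the surface fibers by volume preserving diffeomorphisms, both center Lyapunov exponents vanish $\mu$-almost everywhere. The projective bundle $PS$ is compact, so the projective cocycle $F$ admits at least one $F$-invariant probability measure $m$ projecting onto $\mu$ (e.g.\ apply Krylov--Bogolyubov to $F$ and note that the projection to $\Sigma\times S$ pushes any such measure onto an $f$-invariant measure projecting onto $\hmu$, which one adjusts to $\mu$ using ergodic decomposition). The invariance principle of Avila--Viana, applied to the linear cocycle $Df_c$ -- which admits both $\alpha$-Hölder stable and unstable linear holonomies by Proposition~\ref{prop.existencelinearholonomies} thanks to the $\alpha^2$-fiber bunching hypothesis -- then forces any such $m$ to be both a $u$-state and an $s$-state, i.e.\ a $su$-state projecting onto $\mu$. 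But Proposition~\ref{p.p+t} rules out $su$-states under pinching and twisting. Contradiction, hence $L(f,\mu)>0$.

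For the neighborhood statement, suppose by contradiction there exists a sequence $g_n \to f$ in the $C^r$-topology with $L(g_n,\mu)=0$. Running the argument of the previous paragraph for each $g_n$ produces an $F_{g_n}$-invariant probability measure $m_n$ projecting to $\mu$ which is a $su$-state for $F_{g_n}$. Since the projective bundle is compact, we may pass to a subsequence with $m_n\to m$ in the weak$^*$ topology; continuity of the projection ensures $m$ still projects onto $\mu$. Because $g_n\to f$ in $C^r$, the induced maps $F_{g_n}$ converge uniformly to $F_f$, and by Remark~\ref{rem.continuity.holon} the linear holonomies $H^s$, $H^u$ of $g_n$ converge to those of $f$ on sets of bounded Hölder norm (a bound which is preserved along a $C^r$-convergent sequence). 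Applying Lemma~\ref{lem.closedu} together with its stable analogue (obtained by replacing $f$ with $f^{-1}$), the limit $m$ is simultaneously a $u$-state and an $s$-state for $F_f$, i.e.\ a $su$-state projecting onto $\mu$. This contradicts Proposition~\ref{p.p+t} applied to $f$, finishing the proof.

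The main obstacle is the precise invocation of the invariance principle in the present (nonlinear) skew-product setting, and for measures $\hmu$ that are only assumed to have local product structure rather than being Bernoulli or Markov. The classical results of Avila--Viana and Bonatti--Gómez-Mont--Viana are stated for linear cocycles over hyperbolic base maps; one must verify that their hypotheses -- existence of $\alpha$-Hölder linear holonomies, local product structure of the base measure, and the $\sims$/$\simu$-invariance of the disintegration forced by vanishing exponents -- all transfer to the cocycle $Df_c$ viewed fiberwise over $f$. This is essentially the content of Proposition~7.1 of \cite{Pol16} invoked already in the proof of Proposition~\ref{p.p+t}, and it is where the regularity $r>1$ and the $\alpha^2$-fiber bunching are used. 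A secondary technical point is controlling Hölder norms uniformly along the sequence $g_n$, which is exactly what the $C^r$ topology buys us and what Remark~\ref{rem.continuity.holon} is tailored to handle; the rest of the openness argument is then soft compactness.
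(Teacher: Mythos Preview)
Your proposal is correct and follows essentially the same approach as the paper: invoke the invariance principle from \cite{ASV13} to deduce that vanishing center exponents force every $F$-invariant measure projecting to $\mu$ to be a $su$-state, contradict this via Proposition~\ref{p.p+t}, and for the openness take a convergent subsequence of $su$-states and apply Lemma~\ref{lem.closedu} (and its stable counterpart) to obtain a $su$-state for $f$, again contradicting Proposition~\ref{p.p+t}. The paper's write-up is terser but the logical skeleton is identical; your extra care about the existence of an $F$-invariant lift of $\mu$ and the uniform H\"older control along the sequence $g_n$ just makes explicit what the paper leaves implicit.
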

\begin{proof}
By \cite{ASV13} if $\lambda^c(Df)=0$ every $F$-invariant measure is an $su$-state, so using proposition~\ref{p.p+t} we prove that $L(f,\mu)>0$.

For the second part assume that there exist $f_k\to f$ with $L(f_k,\mu)=0$, then every $F_k$-invariant measure $m^k$ is an $su$-state.
Take a sub-sequence of $m^k$ that converges to some $m$. By Lemma~\ref{lem.closedu} $m$ is a $su$-state, which is a contradiction.
\end{proof}

We remark that if $r\geq 2$ we can take $\alpha =1$ in the previous statement. We also remark that the same theorem is true if $f \in CB^{r,\alpha}_{\leb}(M)$.
\section{The proof of theorem A}
The main difference between the case when $r=1+\alpha$ and when $r\geq 2$ is the existence of the linear holonomies, which was treated in section \ref{s.holonomies}. From now on the proof is the same for both cases. So we suppose that $r\geq 2$.

Let $M$ be a fiber bundle over $\tM$, with fiber $S$, and take $f\in CB^{r}_{\leb}(M)$. By lemma \ref{l.anosov}, $f$ induces a diffeomorphism $\tf: \tM \to \tM$ which is Anosov. Moreover, $\tf$ preserves $\tilde{\mu} = \pi_*leb$. As in the previous section, we write $Df_c = Df|_{E^c}$ and recall that $\mu^c_{\tx}$ is the disintegration of the Lebesgue measure on the fiber $S_{\tx}$. Since $M$ is a smooth bundle, the measure $\mu^c_{\tx}$ is just the Lebesgue measure on $S_{\tx}$. 

In the following lemma we show how to perturb the cocycle $(f,Df_c)$, which is pinching, to obtain a cocycle that is also twisting. 

\begin{lemma}
\label{l.twisting.dense}
Let $(f,Df_c)$ be a pinching cocycle for the periodic point $\tp$. Then there exists $g$ arbitrarily $C^r$-close to $f$ such that the cocycle $(g,Dg_c)$ is pinching and twisting. Moreover, $g^\kappa|_{S_\tp}=f^\kappa|_{S_\tp}$, where $\kappa\in \natural$ is the period of the periodic point $\tp$ for $\tf$.  
\end{lemma}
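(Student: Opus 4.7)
The idea is to leave $f$ unchanged on $S_\tp$ — so pinching persists automatically — and to realize twisting by a $C^r$-small perturbation supported in a tiny neighborhood of a single point on the $f$-orbit of a homoclinic point $\hz$. I will design the perturbation so that the four linear holonomies entering Definition~\ref{def.twisting} remain unchanged, while the middle factor $Df^i_\hz$ is modified by the insertion of an arbitrary element $B \in SL(2,\real)$ close to the identity; a Fubini-type argument over $B$ will then produce one realizing twisting on a set of positive $\mu^c_\tp$-measure. To begin, Bowen's coding of the Anosov map $\tf$ (\cite{Bow70}) furnishes a homoclinic point $\hz \in \tilde{W}^u_{loc}(\tp) \cap \tf^{-i}(\tilde{W}^s_{loc}(\tp)) \setminus \{\tp\}$ for some $i \ge 1$; form the holonomy loop $h = h^s_{\tf^i(\hz),\tp} \circ f^i_\hz \circ h^u_{\tp,\hz}$ and its linearization $H_t$ on $S_\tp$. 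In the smooth volume-preserving setting each factor of $h$ preserves the conditional Lebesgue measure $\mu^c_\tp$, so $h$ is a $\mu^c_\tp$-preserving diffeomorphism of $S_\tp$; Poincar\'e recurrence combined with $\mu^c_\tp(NUH_\tp) > 0$ then provides some $j \in \natural$ with $K_1 := NUH_\tp \cap h^{-j}(NUH_\tp)$ of positive measure. Pick an index $0 \le k_0 < i$, set $\tq = \tf^{k_0}(\hz)$, and exploit that $\hz$ is non-periodic with $\tf^n(\hz) \to \tp$ as $n \to \pm\infty$ to choose a neighborhood $\hat U \subset \tM$ of $\tq$ disjoint from $\tp$ and from $\{\tf^k(\hz) : k \ne k_0\}$.

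For each $B \in SL(2,\real)$ in a small ball $\cO_\epsilon$ about the identity, I will build a $C^r$-small, Lebesgue- and fiber-preserving diffeomorphism $\Phi_B$ of $M$ supported in $\pi^{-1}(\hat U)$, whose fiber-derivative at a chosen reference point $t_\tq \in S_\tq$ equals $B$, and set $g_B := \Phi_B \circ f$. Three consequences of this construction are crucial: (i) since the $\tf$-orbit of $\tp$ misses $\hat U$, $g_B^\kappa|_{S_\tp} = f^\kappa|_{S_\tp}$, so $g_B$ is pinching at $\tp$ with the \emph{same} set $NUH_\tp$ and Oseledets directions $e^\pm$ as $f$; (ii) the forward and backward orbits of $\tp$, $\hz$ and $\tf^i(\hz)$ that enter the limits in Propositions~\ref{prop.holonomyexistence}--\ref{prop.existencelinearholonomies} all avoid $\hat U$ by construction, so the four holonomies $h^u_{\tp,\hz}, h^s_{\tf^i(\hz),\tp}, H^u_{\tp,\hz}, H^s_{\tf^i(\hz),\tp}$ are literally identical for $f$ and $g_B$; (iii) $Dg_B^i_\hz$ differs from $Df^i_\hz$ only by the insertion of $B$ at the $k_0$-th step, so the map $B \mapsto H^{g_B}_t$ is a diffeomorphism of $SL(2,\real)$ carrying a neighborhood of $I$ onto a neighborhood of $H^f_t$.

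For each $t \in K_1$, the set of $A \in SL(2,\real)$ with $A_*\{e^+(t),e^-(t)\} \cap \{e^+(h^j(t)),e^-(h^j(t))\} \ne \emptyset$ is a finite union of proper algebraic subvarieties, hence Lebesgue-negligible; pulling back through the smooth (polynomial in $B$) parametrization $B \mapsto H^{g_B}_{h_{g_B}^{j-1}(t)} \cdots H^{g_B}_t$, the set of bad $B \in \cO_\epsilon$ is again Lebesgue-null. Fubini on $K_1 \times \cO_\epsilon$ then yields a full-measure set of $B \in \cO_\epsilon$ for which twisting at level $j$ holds at $\mu^c_\tp$-a.e. $t \in K_1$. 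I will pick such a $B$ with $\|B - I\|$ arbitrarily small; the resulting $g_B$ is $C^r$-close to $f$, is pinching by (i), and is twisting — for $B$ close enough to $I$ the $C^0$-continuity of the holonomies noted in Remark~\ref{rem.continuity.holon} (combined with absolute continuity of $h_{g_B}$) guarantees that the positive-measure twisting set persists on $K_1^{g_B}$. The hard part will be (ii): one must verify that the limits defining the four holonomies truly never see the perturbation, which rests on $\hz$ being non-periodic and on $\tf^n(\hz) \to \tp$ in both time directions, ensuring that only the single orbit point $\tq$ of $\hz$ lies in $\hat U$ and that the forward/backward orbits of $\tp$ and of $\tf^i(\hz)$ likewise avoid $\hat U$.
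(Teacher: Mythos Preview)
There is a genuine gap in your Fubini argument. You control the fiber-derivative of $\Phi_B$ only at the single reference point $t_\tq \in S_\tq$, yet claim (iii) and the subsequent Fubini step require that for \emph{each} $t \in K_1$ the map $B \mapsto (H^{g_B}_t)^j$ be built by inserting $B$ into a fixed product of linear maps. This fails: for a given $t \in K_1$, the modification to $H^{g_B}_t$ comes from the fiber-derivative of $\Phi_B$ evaluated at the orbit point $f^{k_0}_\hz\bigl(h^u_{\tp,\hz}(t)\bigr) \in S_\tq$, and this equals $B$ only for the unique $t$ whose orbit passes through $t_\tq$. If the fiber support of $\Phi_B$ is a small neighborhood of $t_\tq$ (as needed for $C^r$-smallness), then for $\mu^c_\tp$-almost every $t \in K_1$ the orbit misses that support entirely and $B \mapsto (H^{g_B}_t)^j$ is \emph{constant} in $B$; the bad set of $B$ can then be all of $\cO_\epsilon$ and Fubini yields nothing. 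Since no area-preserving diffeomorphism of a compact surface has constant derivative $B \ne I$, enlarging the fiber support does not rescue the argument either.

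The paper's proof sidesteps this by working at a \emph{single} point rather than over all of $K_1$. One first passes to a compact $K \subset NUH_\tp$ of positive measure on which $t \mapsto (e^+(t),e^-(t))$ is continuous, then picks a density point $t$ of $K$ with $h^{j_t}(t)$ also a density point (Poincar\'e recurrence). The perturbation is a one-parameter rotation flow centered at $t_\hz = h^u_{\tp,\hz}(t)$, supported in a fiber-neighborhood $V \subset S_\hz$ so small that its iterates under the loop are disjoint; since the rotation fixes $t_\hz$, the orbit $t, h(t), \ldots, h^{j_t}(t)$ is unchanged and only the first factor $H_t$ in $(H_t)^{j_t}$ is modified --- by a rotation $R_T$. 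A single small $T>0$ breaks the intersection in Definition~\ref{def.twisting} at the point $t$, and continuity of $e^\pm$ on $K$ together with density of $t$ and $h^{j_t}(t)$ in $K$ upgrade this to a positive-measure subset of $K$. No parametrization by $SL(2,\real)$ and no Fubini over $t$ are needed.
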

\begin{proof}
Let $K\subset S_{\tp}$ be a compact set with the following properties:
\begin{itemize}
\item $\mu^c_\tp(K)>0$;
\item every point $t\in K$ has one positive and one negative Lyapunov exponent for the cocycle $(f^{\kappa}, Df_c)$;
\item the map $t\mapsto (e^u(t),e^s(t))$ is continuous on $K$, where $e^u(t)$ and $e^s(t)$ are the Osedelets spaces. 
\end{itemize}
Such compact sets always exist, see section $4.2.1$ in \cite{BaP01}. Fix $\tz\in W^u_{loc}(\tp)$ such that $\tf^i(\tz)\in W^s_{loc}(\tp)$, with $i>0$. Write $h=h^s_{\tf^i(\tz),\tp}\circ f_{\tz}^i\circ h^u_{\tp,\tz}$. By the formula given in proposition \ref{prop.holonomyexistence}, we have that $h^u_{\tp,\tz}$ only depends on $f^n(\tz)$ and $f^n(\tp)$, for $n\leq 0$, and $h^s_{\tf^i(\tz),\tp}$ only depends on $f^{n+i}(\tz)$ and $f^n(\tp)$, for $n\geq 0$. 

Take $t$ a density point of $K$ and $j_t\in \natural$ such that $h^{j_t}(t)$ is also a density point of $K$, this is can be done because $h$ preserves $\mu^c_\tp$ and as a consequence of Poincar\'e recurrence theorem (Referencia). We can take $j_t$ to be the smallest natural number that verifies this condition for $t$. Let $t_{\tz}=h^u_{\tp,\tz}(t)$ and let $H_t=H^s_{f^i(t_{\tz}),h(t)}\circ Df_{\tz}^i(t_\tz)\circ H^u_{t,t_{\tz}}$. For $H_t$, the map $H^u_{t,t_{\tz}}$ only depends on $f^n(t)$ and $f^n(\tz)$, for $n\leq 0$, and $H^s_{f^i(t_{\tz}),h(t)}$ only depends on $f^n(h(t))$ and $f^n(f^i(\tz))$, for $n\geq 0$. 	

If the twisting condition did not hold, then for almost every $t'\in K$ we would have
\begin{equation}\label{eq.condt}
(H_{t'})^{j_{t'}}\left(\lbrace e^u(t'),e^s(t') \rbrace\right) \cap \lbrace e^u(h^{j_{t'}}(t')),e^s(h^{j_{t'}}({t'})) \rbrace\neq\emptyset,
\end{equation}
in particular, we can assume that this holds for $t$.

Take $V\subset  S_{\tz}$ a neighborhood of $t_{\tz}$ in $S_{\tz}$ small enough such that the sets $V$, $\cdots$, $h^u_{\tp, \tz} \circ h^{j-1}(V)$ are pairwise disjoint. Also fix a small open neighborhood of $\tz$, $\tilde{U}\subset \tM$, such that the sets $\tilde{U}$, $\cdots$, $\tf^{i-1}(\tilde{U})$ are pairwise disjoint. Consider the neighborhood of $\tz$ given by $\tilde{U} \times V$.

Let $D \subset \real^2$ be the unitary disc and let $U=B^{d-2}(0,1) \subset \real^{d-2}$, where $d$ is the dimension of $M$ and $B^{d-2}(0,1)$ is the unitary ball on $\real^{d-2}$. Take a smooth parametrization $\Phi:U\times D\to M$ such that $\Phi(U\times D) \subset \tilde{U}\times V$ and for each $y\in U$ we have that $\Phi(\{y\}\times D) \subset \pi^{-1}(\pi(\Phi(\{y\}\times D))$. We also take $\Phi$ verifying $\Phi(0)=t_{\tz}$ and $\Phi^{-1}_* \mu\mid _{V \times \tilde{U}}$ is the standard volume in $U\times D$.

Using polar coordinates on $D$, we consider the vector field $X(r',\theta) = r' \frac{\partial}{\partial \theta}$. Let $\rho: [0,1] \to [0,1]$ be a smooth bump function that verifies, $\rho(r') = 1$ if $r'\in [0,\frac{1}{3}]$, and $\rho(r') = 0$ if $r'\in [\frac{2}{3}, 1]$. Using coordinates $(y,r',\theta)$ on $U\times D$, where $y\in U$, we define the vector field $\hat{X}$ on $U \times D$ by
\begin{equation}
\label{eq.vectorfield}
\hat{X}(y,r',\theta) = \left(0,\rho(\|y\|) \rho(r')r' \frac{\partial}{\partial \theta}\right).
\end{equation}

For $T\in \real$, let $\phi_T$ be the flow generated by $\hat{X}$. Observe that $\phi_T=\id$ in a neighborhood of the boundary of $U\times D$, also that $\phi_T(0)=0$ and $D\phi_T(0)=id \times R_T$, where $R_T$ is the rotation counterclockwise of angle $T$. Using the parametrization $\Phi$ we define a flow $\phi'_T : M \to M$ as follows: if $q\notin  \tilde{U} \times V$, then $\phi'_T(q) = \id$. If $q\in V\times \tilde{U}$, then
\[
\phi'_T(q) = \Phi \circ \phi_T \circ \Phi^{-1}(q).
\] 
Take $g_T = f\circ \phi'_T$. Our perturbation does not affect the orbit of $t_{\tz}$, this implies that $h_T^i(t)$ is not affected for $i=0,\dots,j_t$, in particular, $h_T^{j_t}(t) = h^{j_t}(t)$ and the linear holonomy is given by 
\[
H^T_{t}=H^s_{f^i(t_{\tz}),h(t)}\circ Df_{\tz}^i(s)\circ D\Phi(0) \circ R_t \circ \left(D\Phi(0)\right)^{-1}\circ H^u_{t,t_{tz}}.
\]
Furthermore, $\left(H^T_{h(t)}\right)^{j_t-1}=\left(H_{h(t)}\right)^{j_t-1}$. 

For $T>0$ small, $g_T$ is $C^r$-close to $f$ and it holds that 
$$
\left(H^T_t\right)^{j_t}(\{e^u(t),e^s(t)\}) \cap \{e^u(h^{j_t}(t)),e^s(h^{j_t}(t))\} = \emptyset.
$$
Observe that $g_T^{\kappa}|_{S_{\tp}} = f^{\kappa}|_{S_{\tp}}$. Since $h^{j_t}(t)$ is a density point of $K$ and for the points of $K$ the Oseledets splitting varies continuously, we conclude that $g_T$ is twisting.
\end{proof}

As a consequence of this lemma and theorem \ref{th.criterionexponents}, we have the following theorem.

\begin{theorem}\label{t.pin.accum}
Let $f\in CB^{r}_{\leb}(M)$ be a diffeomorphism such that $(f,Df_c)$ is pinching. Then arbitrarily $C^r$-close to $f$, it exist $C^r$-open sets, inside $CB^{r}_{\leb}(M)$, of diffeomorphisms with positive integrated center Lyapunov exponent. 
\end{theorem}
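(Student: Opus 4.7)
The plan is to assemble Theorem~\ref{t.pin.accum} by chaining together the two tools just established: the perturbative Lemma~\ref{l.twisting.dense}, which upgrades a pinching cocycle to a pinching-and-twisting one, and the criterion Theorem~\ref{th.criterionexponents}, which turns pinching-and-twisting into a $C^r$-open set of positive integrated exponent. There is essentially no new computation to do; the work is in checking that the outputs of one tool live in the hypotheses of the other.

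Given $f \in CB^r_{\leb}(M)$ with $(f,Df_c)$ pinching at a periodic point $\tp$, I would first fix an arbitrary $C^r$-neighborhood $\cU$ of $f$ and apply Lemma~\ref{l.twisting.dense} inside $\cU$ to produce a $g \in \cU$ for which $(g,Dg_c)$ is pinching and twisting. Two verifications are needed before I can feed $g$ into the criterion. First, $g$ must still lie in $CB^r_{\leb}(M)$: the perturbation in Lemma~\ref{l.twisting.dense} is obtained by composing $f$ with a flow $\phi'_T$ whose generating vector field $\hat{X}$ is tangent to the fibers (its base component vanishes) and is a Hamiltonian rotational vector field in coordinates in which $\mu$ is standard, so $\phi'_T$ is a volume-preserving fiber-preserving diffeomorphism and $g = f\circ\phi'_T \in \SPV$. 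The $\alpha$-center-bunching condition is $C^1$-open, and by shrinking $T$ we keep $g$ arbitrarily $C^r$-close, hence $C^1$-close, to $f$, so $g \in CB^r_{\leb}(M)$. Second, the pinching of $g$ at $\tp$ is automatic because Lemma~\ref{l.twisting.dense} guarantees $g^\kappa|_{S_\tp} = f^\kappa|_{S_\tp}$, and pinching depends only on the return map at the periodic fiber.

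Now I apply Theorem~\ref{th.criterionexponents} to $g$, invoking the remark following its proof that extends the statement from the fiber-bunched skew product setting to $CB^{r,\alpha}_{\leb}(M)$ (in particular, to $CB^r_{\leb}(M)$ when $r\geq 2$, which is the regime fixed at the start of this section). The criterion yields simultaneously $L(g) > 0$ and a $C^r$-open neighborhood $\cV$ of $g$ inside $CB^r_{\leb}(M)$ on which the integrated center Lyapunov exponent remains positive. Since $g$ was chosen inside the arbitrary neighborhood $\cU$ of $f$ and $\cV$ may be shrunk to lie in $\cU$, we obtain a $C^r$-open subset of $CB^r_{\leb}(M)$ arbitrarily $C^r$-close to $f$ with positive integrated exponent, which is exactly the conclusion.

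The only step where anything could go wrong is the bookkeeping in the first paragraph: ensuring that the perturbation from Lemma~\ref{l.twisting.dense} produces a diffeomorphism still in the class $CB^r_{\leb}(M)$ (volume-preserving, fiber-preserving, partially hyperbolic, and center-bunched). All four properties follow from the explicit form of $\hat{X}$ and the $C^1$-openness of partial hyperbolicity and center bunching, so no serious obstacle remains.
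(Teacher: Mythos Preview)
Your proposal is correct and follows essentially the same route as the paper: apply Lemma~\ref{l.twisting.dense} to obtain a pinching-and-twisting $g$ arbitrarily $C^r$-close to $f$, then invoke Theorem~\ref{th.criterionexponents} (in its smooth version) to get both $L(g)>0$ and a $C^r$-open neighborhood with positive exponent. The paper's write-up separates the openness step by appealing directly to Lemma~\ref{lem.closedu} (arguing that if $g$ were accumulated by maps with zero exponent it would admit an $su$-state), but since this is exactly the argument already packaged inside the ``Moreover'' clause of Theorem~\ref{th.criterionexponents}, your streamlined invocation is equivalent; your explicit check that $g$ remains in $CB^r_{\leb}(M)$ is a detail the paper leaves implicit.
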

\begin{proof}
By lemma~\ref{l.twisting.dense}, arbitrarily $C^r$-close to $f$ there exists a diffeomorphism $g$ which is pinching and twisting. By theorem~\ref{th.criterionexponents}, we conclude that $L(g,\mu)>0$. By lemma \ref{lem.closedu}, we obtain that $L(.,\mu)$ is positive in a $C^r$-neighborhood of $g$, since otherwise $g$ would be accumulated by diffeomorphisms admiting a $su$-state, which would imply that $g$ has a $su$-state. 
\end{proof}
\begin{remark}
Using theorem~\ref{t.closedu}, we can conclude that the $C^r$-open sets in the statement of theorem \ref{t.pin.accum} is among the partially hyperbolic volume preserving $C^r$-diffeomorphisms, not necessarily skew products. 
\end{remark}

We will need the following theorem.

\begin{theorem}[Theorem $1.2$ in \cite{LY17}]
\label{thm.ly17}
Let $f$ be a volume preserving, $C^r$-diffeomorphism of a compact manifold $M$ of dimension $d\geq 2$. Then arbitrarily $C^1$-close to $f$ it exists a volume preserving $C^r$-diffeomorphism $g$ without zero exponents on a set of positive measure.
\end{theorem}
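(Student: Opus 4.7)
The plan is to argue by contradiction. Suppose there is a $C^1$-neighborhood $\mathcal{U}$ of $f$, inside the volume-preserving $C^r$-diffeomorphisms of $M$, such that every $g\in\mathcal{U}$ has at least one zero Lyapunov exponent at $\leb$-almost every point. The goal is to construct some $g\in \mathcal{U}$ violating this, by combining three standard ingredients from conservative $C^1$-perturbation theory.

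\textbf{First}, I would apply the Bochi--Ma\~n\'e dichotomy in the conservative $C^1$-setting: a $C^1$-generic element of $\mathcal{U}$ either admits a non-trivial dominated splitting on a set of positive $\leb$-measure (in which case the splitting forces distinct Lyapunov exponents on that set, already contradicting the hypothesis once combined with $\det Df\equiv 1$), or has all Lyapunov exponents coinciding at $\leb$-almost every point, hence identically zero. This reduces us to the case of a generic $g\in\mathcal{U}$ whose Oseledets splitting admits no non-trivial dominated decomposition at the level of the vanishing exponent.

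\textbf{Second}, I would use the Pugh--Robinson closing lemma in the conservative setting to produce a dense set of periodic orbits for $g$, and then apply a conservative Franks-type perturbation of the derivative cocycle along one such orbit, together with Avila's $C^r$-smoothing procedure, so as to stay inside $\mathcal{U}$ while turning the periodic orbit into a hyperbolic one. Call the resulting map $g_1$ and the periodic point $p$. A further small $C^1$-perturbation (made possible by the absence of a dominated splitting obstructing the bending of invariant directions) creates a transverse homoclinic intersection at $p$, and hence a horseshoe $\Lambda$ supporting non-trivial hyperbolic invariant measures.

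\textbf{Third}, and this is the main obstacle, I would try to upgrade the hyperbolicity of $\Lambda$ from a measure-zero Cantor set to a set of positive $\leb$-measure. In the conservative framework one invokes Pesin stable and unstable manifolds together with their absolute continuity and a Hopf-type argument to show that, once the homoclinic picture is rich enough, some ergodic component of $\leb$ must ``see'' $\Lambda$. The Pesin block corresponding to the hyperbolic Oseledets behaviour then has positive $\leb$-measure and on it every Lyapunov exponent is non-zero, contradicting the defining property of $\mathcal{U}$. The hard part of the proof is precisely this step: creating hyperbolic periodic orbits and horseshoes via Bochi--Ma\~n\'e--Franks perturbations is by now routine, but propagating hyperbolicity to a positive $\leb$-measure set requires genuinely non-uniform-hyperbolicity tools (Pesin theory, absolute continuity of Pesin foliations, a Hopf--Anosov type argument) and a careful use of volume preservation---which, in the absence of SRB attractors, forces the created horseshoe to interact with the Lebesgue ergodic components rather than living in some basin disjoint from them.
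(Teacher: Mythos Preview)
This theorem is not proved in the paper; it is quoted as Theorem~1.2 of \cite{LY17} and used as a black box, so there is no argument in the paper to compare against. That said, your outline has two genuine gaps.

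\textbf{Step 1 does not close the dichotomy.} The Bochi--Viana statement is that for a $C^1$-generic conservative diffeomorphism, at almost every point either all exponents vanish or the Oseledets decomposition (into subspaces with \emph{distinct} exponents) is dominated along the orbit. A dominated Oseledets splitting only tells you that there are at least two distinct Lyapunov exponents; it does \emph{not} tell you that every exponent is non-zero. In dimension $d\ge 3$ a spectrum such as $(\lambda,0,-\lambda)$ with a dominated three-way splitting is perfectly compatible with the generic picture, so your ``already contradicting the hypothesis'' clause is unjustified. You are not yet done even in the dominated case.

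\textbf{Step 3 is the real obstruction, and the mechanism you propose does not exist.} Horseshoes produced by transverse homoclinic intersections have Lebesgue measure zero. Pesin theory and the Hopf argument operate on the set of points that \emph{already} have non-zero exponents: absolute continuity of Pesin laminations lets you show that ergodic components of $\leb$ restricted to the non-uniformly hyperbolic set are essentially open, but it gives you nothing if that set has measure zero to begin with. There is no general principle by which the presence of a hyperbolic basic set forces a Lebesgue ergodic component to ``see'' it; indeed, the Bochi--Ma\~n\'e phenomenon on surfaces shows that a $C^1$-generic non-Anosov area-preserving map has horseshoes \emph{and} zero exponents $\leb$-a.e.\ simultaneously. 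So the passage from $\Lambda$ to a positive-measure Pesin block is exactly the content of the theorem, not a consequence of standard tools. The actual argument in \cite{LY17} proceeds by a different route (perturbing the derivative cocycle over long periodic orbits that equidistribute, in the spirit of the Ma\~n\'e--Bochi perturbation but pushing exponents \emph{apart} rather than together), not by building a horseshoe and hoping it fattens.
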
 

In our scenario, we want to use a fibered version of theorem \ref{thm.ly17}. For that we need the following lemma.
\begin{lemma}
\label{lem.smoothpath}
Let $\mathrm{Diff}^r_{\leb}(S)$ be the space of $C^r$-diffeomorphisms of $S$ that preserve the Lebesgue measure. If $g\in \mathrm{Diff}^r_{\leb}(S)$ is sufficiently $C^1$-close to the identity, then it exists a smooth path $(g_t)_{t\in[0,1]} \subset \mathrm{Diff}^r_{\leb}(S)$ connecting $g$ to the identity such that for any $t\in [0,1]$, $g_t$ is $C^1$-close to the identity. 
\end{lemma}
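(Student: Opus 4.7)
The plan is to first build a smooth interpolation from $g$ to the identity through $C^r$-diffeomorphisms (not yet volume-preserving) that stays $C^1$-close to the identity, and then correct each interpolant into a volume-preserving diffeomorphism via Moser's trick, while arranging that the corrections are trivial at the endpoints.

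For the interpolation step, I would fix an auxiliary Riemannian metric on $S$ and define the displacement vector field $\xi(x):=\exp_x^{-1}(g(x))$, which is well-defined, $C^r$, and small in $C^1$ whenever $g$ is sufficiently $C^1$-close to the identity. The geodesic homotopy
\[
 g_t(x):=\exp_x\bigl((1-t)\xi(x)\bigr), \qquad t\in[0,1],
\]
defines a smooth path of $C^r$-diffeomorphisms with $g_0=g$ and $g_1=\id$, each $g_t$ remaining $C^1$-close to the identity uniformly in $t$.

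For the Moser correction, let $\omega$ be the Lebesgue volume form and set $\omega_t:=g_t^*\omega$, which is a smooth path of volume forms of fixed total mass, with $\omega_t$ uniformly $C^0$-close to $\omega$ and, crucially, $\omega_0=\omega_1=\omega$ because both $g$ and $\id$ preserve volume. For each $t\in[0,1]$ apply Moser's theorem to the linear path $s\mapsto(1-s)\omega+s\omega_t$: this yields a $C^r$-diffeomorphism $\phi_t$ with $\phi_t^*\omega_t=\omega$, depending smoothly on $t$ and $C^1$-close to $\id$. Since the interpolating path is constant precisely when $\omega_t=\omega$, the associated Moser time-dependent vector field vanishes identically in that case, so $\phi_0=\phi_1=\id$ on the nose.

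Setting $\tilde g_t:=g_t\circ\phi_t$ produces the required smooth path inside $\Dif^r_\leb(S)$: it connects $\tilde g_0=g$ to $\tilde g_1=\id$, each $\tilde g_t$ is volume-preserving by construction, and each is $C^1$-close to $\id$ as a composition of two near-identity diffeomorphisms. The main point to monitor is the quantitative control of $\phi_t$, namely that $\|\phi_t-\id\|_{C^1}$ is bounded by $\|\omega_t-\omega\|_{C^0}$ and hence ultimately by $\|g-\id\|_{C^1}$; this is the standard estimate coming from the Poincar\'e-type equation $d\beta_t=\omega_t-\omega$ solved smoothly in $t$, and constitutes the only delicate bookkeeping step in the argument.
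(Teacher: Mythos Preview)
Your approach is different from the paper's and has a regularity gap in the Moser step. The pulled-back form $\omega_t = g_t^*\omega$ is only $C^{r-1}$, since its density with respect to $\omega$ is $\det Dg_t$. In the Moser flow along $s \mapsto (1-s)\omega + s\omega_t$, even if the primitive $\beta_t$ solving $d\beta_t = \omega_t - \omega$ is $C^r$ by elliptic regularity, the vector field $X_s$ determined by $i_{X_s}\bigl((1-s)\omega + s\omega_t\bigr) = \beta_t$ is only $C^{r-1}$, because one must divide by the $C^{r-1}$ density of $(1-s)\omega + s\omega_t$. Its time-one map $\phi_t$ is therefore $C^{r-1}$, so $\tilde g_t = g_t \circ \phi_t$ lies in $\mathrm{Diff}^{r-1}_{\leb}(S)$, not $\mathrm{Diff}^r_{\leb}(S)$ as required. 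For $r=1+\alpha$ with $\alpha<1$ the situation is worse: $X_s$ is merely $C^\alpha$ and the flow need not be uniquely defined. A Dacorogna--Moser type construction can in principle recover the lost derivative, but that is not the argument you invoke, and on a closed manifold it requires some additional work and care with the smooth $t$-dependence and the endpoint conditions $\phi_0=\phi_1=\id$.

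The paper sidesteps this entirely by exploiting that $S$ is two-dimensional: the area form is a symplectic form, so area-preserving maps are symplectomorphisms. Via Weinstein's Lagrangian neighborhood theorem, a neighborhood of the diagonal in $(S\times S,\pi_1^*\omega-\pi_2^*\omega)$ is identified with a neighborhood of the zero section in $T^*S$; the graph of $g$ becomes the graph of a closed $C^r$ one-form $\nu$, and the linear homotopy $\nu_t=(1-t)\nu$ yields a path of $C^r$ symplectomorphisms with no loss of regularity. Your strategy, were the regularity issue handled, would have the merit of being dimension-independent --- indeed the paper itself notes in its final section that it does not know whether this lemma holds for higher-dimensional fibers.
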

\begin{proof}
This proof uses several basic tools from symplectic geometry which we will not define, but we refer the reader to \cite{Can01}, chapters $3$ and $9$, for all the definitions and results that we use here. 

Let $\omega$ be the volume form that generates $\leb$ on $S$. Since $S$ is a surface, we have that $(S,\omega)$ is a symplectic manifold and the volume preserving diffeomorphisms are the symplectomorphisms of this manifold.

We consider two other symplectic manifolds $(S\times S, \pi_1^* \omega - \pi_2^*\omega)$, where $\pi_i$ is the projection on the $i$-th coordinate, and the cotangent bundle $(T^*S,\omega_0)$ with the canonical symplectic form. Let $\Delta$ be the diagonal on $S\times S$. It is well known that $\Delta$ is a Lagrangian submanifold of $S\times S$ and that the zero section is a Lagrangian submanifold in $T^*S$. By Weinstein's tubular neighborhood theorem (chapter 9 of \cite{Can01}), it exists a smooth symplectomorphism $\varphi:U_1 \to U_2$ taking $\Delta$ to the zero section, where $U_1$ is a neighborhood of $\Delta$ and $U_2$ is a neighborhood of the zero section. 

If $g$ is a $C^r$-symplectomorphism sufficiently $C^1$-close to the identity, then $\mathrm{graph}(g)$ is contained in $U_1$. Furthermore, its graph is a $C^r$-Lagrangian submanifold. Using $\varphi$, $\mathrm{graph}(g)$ is identified with a $C^r$-Lagrangian submanifold $\Gamma_g$ of $U_2$. Since the zero section is transverse to the fibers in $T^*S$ and $\Gamma_g$ is $C^1$-close to the zero section, we have that $\Gamma_g$ is also transverse to the fibers in $T^*S$. Thus, it exists a $1$-form $\nu$ with regularity $C^r$ such that $\Gamma_g= \mathrm{graph}(\nu)$.

It is true that the graph of a $1$-form $\nu$ is Lagrangian if and only if $d\nu=0$. Consider the smooth family of $1$-forms $\nu_t = (1-t) \nu$, for $t\in [0,1]$. Observe that $d\nu_t=0$ and that $\nu_t$ is a path connecting $\nu$ to the zero form. Hence, the family $\left(\mathrm{graph}(\nu_t)\right)_{t\in [0,1]}$ is a smooth family of $C^r$-Lagrangian submanifolds which are $C^1$-close to the zero section. Using $\varphi$ and a transversality argument again, we obtain that each of these graphs defines a $C^r$-diffeomorphism $g_t: S\to S$ which is $C^1$-close to the identity. Furthermore, since its graphs are Lagrangian submanifolds, it holds that $g_t$ is a symplectomorphism for each $t\in [0,1]$. Thus, we have obtained a smooth family of symplectomorphisms $(g_t)_{t\in [0,1]}$, verifying the conditions of the lemma.  
\end{proof}

\begin{proof}[Conclusion of the proof of theorem A]
We claim that the pinching condition is $C^1$-dense inside $CB^{r,\alpha}_{\leb}(M)$. Indeed, let $f\in CB^{r,\alpha}_{\leb}(M)$ and let $\tp\in \tM$ be a periodic point for $\tf$. For simplicity, suppose that $\tp$ is a fixed point and fix a small open neighborhood $\tilde{U}\subset \tM$ of $\tp$ such that in a trivialization chart of the bundle $M$, a neighborhood of $S_{\tp}$ is given by $\tilde{U} \times S$.  

Consider $f_{\tp} = f|_{S_{\tp}}$. By theorem \ref{thm.ly17}, it exists $g:S \to S$ volume preserving which is $C^1$-close to $f_{\tp}$. Take $G=f^{-1}_{\tp} \circ g$ and observe that $G$ is $C^1$-close to the identity. Let $(G_t)_{t\in [0,1]}$ be the smooth path connecting $h$ to the identity given by lemma \ref{lem.smoothpath}. Fix $\rho>0$ small enough such that the ball, $\tilde{B}(\tp, \rho)$, of radius $\rho$ and center $\tp$ is contained in $\tilde{U}$. We define the fibered diffeomorphism
\[
\begin{array}{rcl}
\mathcal{G}: \tilde{B}(\tp,\rho) \times S &\to & \tilde{B}(\tp, \rho) \times S\\
(\tilde{q}, t) & \mapsto & (\tilde{q}, G_{\frac{d(\tp, \tq)}{\rho}}(t)).
\end{array}
\]
We can extend $\mathcal{G}$ to a diffemorphism which is the identity outside $\tilde{U} \times S$ and inside this neighborhood coincides with $\mathcal{G}$. We also denote this diffeomorphism by $\mathcal{G}$. Since $G_t$ is $C^1$-close to the identity, we have that $\mathcal{G}$ is also $C^1$-close to the identity. Also observe that $\mathcal{G} \circ f|_{S_{\tp}} = g$, which has positive integrated Lyapunov exponent. Hence, $\mathcal{G} \circ f$ is pinching, which proves the claim.

Since the pinching condition is $C^1$-dense, using theorem \ref{t.pin.accum} we finish the proof of theorem A.
\end{proof}

\section{The proof of theorem B}

Let $\sigma:\Sigma \to \Sigma$ be a hyperbolic homeomorphism and $\mathcal{FB}^{r,\alpha^2}_{\leb}({\Sigma\times S})$ be the set of fiber bunched skew products defined in section \ref{sec.preliminaries}. By the results in section \ref{s.holonomies} the linear and non-linear holonomies are well defined inside $\mathcal{FB}^{r,\alpha^2}_{\leb}({\Sigma\times S})$. Similar to lemma \ref{l.twisting.dense}, but in the scenario of theorem $B$, we have the following lemma.
\begin{lemma}
\label{l.twisting.dense.homeo}
If $f\in \mathcal{FB}^{r,\alpha^2}_{\leb}(\Sigma \times S)$ is a fiber-bunched cocycle such that $(f,Df_c)$ is pinching for the periodic point $\hp$, then it exists $g\in \mathcal{FB}^{r,\alpha^2}_{\leb}(\Sigma \times S)$ arbitrarily $C^r$-close to $f$ such that the cocycle $(g,Dg_c)$ is pinching and twisting. Moreover, $g^\kappa_\hp=f^\kappa_\hp$, where $\kappa\in \natural$ is the period of the periodic point $\hp$ for $\sigma$.  
\end{lemma}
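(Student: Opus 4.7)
The plan is to follow Lemma~\ref{l.twisting.dense} closely, with the smooth-bundle parametrization of a neighborhood of the homoclinic point replaced by a perturbation that is genuinely Lipschitz in the base variable $\hx\in\Sigma$. Exactly as in that lemma, the pinching hypothesis yields a compact set $K\subset S_\hp$ with $\mu^c_\hp(K)>0$ on which the Oseledets decomposition $t\mapsto(e^u(t),e^s(t))$ for $(f^\kappa_\hp,Df^\kappa_\hp)$ is continuous. I will pick a density point $t\in K$ and the smallest $j_t\in\natural$ for which $h^{j_t}(t)$ is again a density point of $K$ (which exists by Poincar\'e recurrence applied to the measure-preserving map $h:S_\hp\to S_\hp$), together with a homoclinic point $\hz\in W^u_{loc}(\hp)\cap\sigma^{-i}(W^s_{loc}(\hp))\setminus\{\hp\}$, and set $t_\hz=h^u_{\hp,\hz}(t)$. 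If the twisting condition already holds there is nothing to prove, so I may assume that it fails at $t$ with $j=j_t$.

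Next I construct the perturbation. Choose $\delta>0$ small enough that the open ball $\nB(\hz,\delta)\subset\Sigma$ is disjoint from the $\sigma$-orbit of $\hp$, and define the Lipschitz cut-off
\[
\chi(\hx)=\max\!\left(0,\,1-\tfrac{1}{\delta}\d_\Sigma(\hx,\hz)\right),
\]
so that $\chi(\hz)=1$ and $\chi$ vanishes on that orbit. Fix a symplectic chart $\Phi:D\to V\subset S_\hz$ around $t_\hz$ with $D$ the unit disc in $\real^2$, use the fiber factor of the divergence-free vector field $\hat X$ from \eqref{eq.vectorfield} (i.e.\ the base factor is now trivial because $\Sigma$ is not a manifold) to define a smooth area-preserving flow $\phi_T:S\to S$ supported in $V$, and set
\[
g(\hx,t)=\bigl(\sigma(\hx),\,f_\hx\circ\phi_{T\chi(\hx)}(t)\bigr)
\]
for $T>0$ small. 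Since each $\phi_s$ is area preserving, $g_\hx\in\Dif^r_\leb(S)$; since $\chi$ is Lipschitz and $T\mapsto\phi_T$ is $C^\infty$ in the $C^r$ topology, the family $\hx\mapsto g_\hx$ is still $\alpha$-H\"older, and for $T$ small lies within any prescribed $C^r$-neighborhood of $f$ in $\mathcal{FB}^{r,\alpha^2}_{\leb}(\Sigma\times S)$ (recall that fiber bunching is $C^1$-open). Because $\chi\equiv 0$ on the $\sigma$-orbit of $\hp$, we have $g^\kappa_\hp=f^\kappa_\hp$, so pinching at $\hp$ is preserved and the Oseledets directions $e^u(\cdot),e^s(\cdot)$ are unchanged.

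Twisting is then verified exactly as in Lemma~\ref{l.twisting.dense}: by the series formulas of Propositions~\ref{prop.holonomyexistence} and \ref{prop.existencelinearholonomies}, the non-linear and linear holonomies $h^u_{\hp,\hz}$, $h^s_{\sigma^i(\hz),\hp}$, $H^u_{t,t_\hz}$ and $H^s_{f^i(t_\hz),h(t)}$ depend only on forward or backward orbit data lying outside $\nB(\hz,\delta)$ once $\delta$ is small, hence are unaffected; the only effect of the perturbation on the new $H^T_t$ is the insertion of the conjugate rotation $D\Phi(0)\circ R_T\circ D\Phi(0)^{-1}$ inside $Df^i_\hz(t_\hz)$. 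Therefore $(H^T_t)^{j_t}(\{e^u(t),e^s(t)\})$ depends non-trivially on $T$ while its target $\{e^u(h^{j_t}(t)),e^s(h^{j_t}(t))\}$ does not, so for a generic small $T$ twisting holds at $t$, and the density-point property together with the continuity of the Oseledets splitting on $K$ propagates it to a set of positive $\mu^c_\hp$-measure. The main new point compared with Lemma~\ref{l.twisting.dense}, and where I expect the only real work, is the H\"older bookkeeping needed to ensure that $g$ still belongs to $\SP(\Sigma\times S)$ with the prescribed H\"older constant; this should reduce to a triangle-inequality estimate exploiting the Lipschitz continuity of $\chi$, the $C^r$-smoothness of $T\mapsto\phi_T$, and the smallness of $T$.
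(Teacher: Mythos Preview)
Your proposal is correct and follows essentially the same approach as the paper's proof. The paper also replaces the smooth base cutoff of Lemma~\ref{l.twisting.dense} by a Lipschitz one on $\Sigma$, namely $\rho_1(\hx)=\d(\hx,\partial B(\hz,\delta))/\delta$, and multiplies it into the fiberwise vector field before integrating; since scaling an autonomous vector field by a constant just reparametrizes time, this is equivalent to your choice of inserting the Lipschitz cutoff $\chi$ directly into the time parameter via $g_\hx=f_\hx\circ\phi_{T\chi(\hx)}$. Both proofs then conclude exactly as in Lemma~\ref{l.twisting.dense}.
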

\begin{proof}
The proof follows the same steps as the proof of lemma \ref{l.twisting.dense}, the only difference is in the construction of the vector field $\hat{X}$ defined in (\ref{eq.vectorfield}). Let us explain how to adapt the construction of such vector field. Let $K \subset \{\tp\}\times S$, $t\in K$, $\tz \in W^s_{loc}(\tp)$, $t_{\tz} = h^u_{\tp, \tz}(t)$, $V\subset \{\tz\} \times S$ and $\tilde{U} \subset \Sigma$ be as in the proof of lemma \ref{l.twisting.dense}.

 Write $t_{\tz} = (t_1, t_2)$. Consider $V' \subset V$ a small neighborhood of $t_2$ inside $V$ and $\varphi: D \to V'$ be a $C^r$-parametrization from the unit disc $D$ onto  $V'$ such that $\varphi(0) = t_2$. Let $U = B(t_1, \delta)$ be the ball inside $\Sigma$ centered in $t_1$ with radius $\delta>0$ small enough such that $U\subset \tilde{U}$. Consider $\rho_1 : U \to [0,1]$ defined by $\rho_1(\tilde{x}) = \frac{d(\tilde{x}, \partial U)}{\delta}$, where $\partial U$ is the boundary of $U$. We remark that the function $\rho_1$ is Lipschitz continuous. Take $\rho_2:[0,1]\to [0,1]$ a smooth bump function as in the proof of lemma \ref{l.twisting.dense} and consider $X(r',\theta)= r'\frac{\partial}{\partial \theta}$ to be the vector field on $D$ using polar coordinates. On $U\times D$ consider the vector field defined on each fiber by
 \[
 \hat{X}(y,r',\theta) = \rho_1(y)\rho_2(r')X(r', \theta)\in \{y\} \times T_{(r',\theta)} D.  
 \]
 The vector field $\hat{X}$ induces a fibered flow, which we denote it by $\phi_t$. Let $\Phi: U\times D \to U\times V'$ be defined by $\Phi = Id \times \varphi$ and for each $t\in \real$ we define $\phi'_t = \Phi \circ \phi_t \circ \Phi^{-1}$. In the same way as in the proof of lemma \ref{l.twisting.dense} we have that $\phi'_t$ extends to a homeomorphism $g_t: \Sigma \times S \to \Sigma \times S$, which is $C^r$ on the fibers. Furthermore, $Dg_t(t_{\tz}) = D\varphi(0) R_t (D\varphi(0))^{-1}$. The rest of the proof is the same as before.       
\end{proof}

Using lemma \ref{l.twisting.dense.homeo}, we can prove in the same way as in theorem \ref{t.pin.accum} the following theorem.

\begin{theorem}\label{t.pin.accum.homeo}
Let $f\in \mathcal{FB}^{r,\alpha^2}_{\leb}(\Sigma \times S)$ be a skew product such that $(f,Df_c)$ is pinching. Then arbitrarily $C^r$-close to $f$, it exist $C^r$-open sets of skew products with positive integrated center Lyapunov exponent. 
\end{theorem}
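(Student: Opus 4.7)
The plan is to transcribe the proof of Theorem~\ref{t.pin.accum} verbatim, substituting the skew-product perturbation lemma (Lemma~\ref{l.twisting.dense.homeo}) for its diffeomorphism counterpart. All the substantive work has already been packaged into the tools of Section~\ref{s.holonomies}: Propositions~\ref{prop.holonomyexistence} and \ref{prop.existencelinearholonomies} give the (linear) holonomies under the $\alpha^2$-fiber bunching assumption, Theorem~\ref{th.criterionexponents} turns pinching$\,+\,$twisting into a positive exponent, and Lemma~\ref{lem.closedu} provides the closedness of $u$-states under weak$^*$ limits.

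First I would fix the periodic point $\hp$ at which $(f,Df_c)$ is pinching and apply Lemma~\ref{l.twisting.dense.homeo} to produce $g\in \mathcal{FB}^{r,\alpha^2}_{\leb}(\Sigma\times S)$, arbitrarily $C^r$-close to $f$, with $(g,Dg_c)$ both pinching and twisting at $\hp$. The identity $g^\kappa_\hp=f^\kappa_\hp$ from that lemma is exactly what preserves the pinching condition, since the integrated exponent of the pinching definition depends only on the dynamics of the fiber map over $\hp$. Because $\alpha^2$-fiber bunching is $C^1$-open, shrinking the perturbation if necessary keeps $g$ inside $\mathcal{FB}^{r,\alpha^2}_{\leb}(\Sigma\times S)$, so the holonomies supplied by Section~\ref{s.holonomies} remain available for $g$.

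Then Theorem~\ref{th.criterionexponents} applied to $g$ yields $L(g,\mu)>0$ together with a $C^r$-neighborhood of $g$ on which $L(\cdot,\mu)>0$. For completeness I would spell out the openness step as follows. If some sequence $h_k\to g$ in $C^r$ satisfied $L(h_k,\mu)=0$, then by the invariance principle of \cite{ASV13} each projective cocycle $F_k$ would admit an invariant probability $m^k$ projecting on $\mu$ and being an $su$-state. Passing to a weak$^*$ subsequential limit $m$ and invoking Lemma~\ref{lem.closedu} (and its symmetric $s$-state version) gives a $G$-invariant $su$-state for $g$, where $G$ is the projective action of $Dg_c$; Proposition~\ref{p.p+t} forbids this because $(g,Dg_c)$ is pinching and twisting, a contradiction.

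The one point requiring mild care is that the continuity of (linear) holonomies used in Lemma~\ref{lem.closedu} is only uniform on subsets with bounded H\"older constants for $Dg_c$ (cf.\ Remark~\ref{rem.continuity.holon}); therefore I restrict the discussion to a $C^r$-bounded subset of $\mathcal{FB}^{r,\alpha^2}_{\leb}(\Sigma\times S)$, exactly as in the remark following Theorem~B. This is the main bookkeeping obstacle, but no new ideas are required beyond those already deployed for Theorem~\ref{t.pin.accum}; the argument is a direct transcription.
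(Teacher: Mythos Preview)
Your proposal is correct and follows exactly the approach the paper indicates: the paper's own proof is the single sentence ``Using lemma~\ref{l.twisting.dense.homeo}, we can prove in the same way as in theorem~\ref{t.pin.accum},'' and you have faithfully transcribed that argument, invoking Lemma~\ref{l.twisting.dense.homeo} for the perturbation, Theorem~\ref{th.criterionexponents} for positivity and openness, and Lemma~\ref{lem.closedu}/Proposition~\ref{p.p+t} to unpack the latter. Your additional remark about restricting to bounded H\"older constants is consistent with Remark~\ref{rem.continuity.holon} and the comment following Theorem~B.
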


Using theorems \ref{thm.ly17}, \ref{t.pin.accum.homeo} and lemma \ref{lem.smoothpath}, and following the same constructions in the conclusion of the proof of theorem $A$, we can conclude the proof of theorem $B$.

\section{Proof of theorem $C$}
Recall that $p$ is a probability measure on $\{1, \cdots, d\}$, for some $d\in \natural$, and given $(f_1, \cdots, f_d) \in \mathrm{Diff}^r_{\leb}(S)^d$ we are interested in studying the random product generated by this set with distribution $p$. This random product can be seen as the skew product
$$
\begin{array}{rcl}
f:\{1, \cdots, d\}^{\mathbb{Z}} \times S & \to &\{1,\cdots, d\}^{\mathbb{Z}} \times S\\
((x_j)_{j\in \mathbb{Z}}, t) & \mapsto & ((x_{j+1})_{j\in \mathbb{Z}}, f_{x_0}(t)),
\end{array}
$$
with the measure $\mu = p^{\mathbb{Z}} \times \leb$. The proof of theorem $C$ follows the same lines as theorems $A$ and $B$, but the perturbations are simpler. 

Observe that in this scenario, the local linear and non-linear holonomies are just the identity maps. This follows because the skew-product is constant on open sets of $\{1, \cdots, d\}^{\mathbb{Z}}$. 

We say that a random product is \emph{pinching} if it exists $i\in \{1,\cdots, d\}$ such that $f_i$ has positive Lyapunov exponent on a set of positive Lebesgue measure. The definition of twisting remains the same as definition \ref{def.twisting}, but considering the fixed point given by the constant sequence formed by $i$.

\begin{lemma}\label{l.randomtwist}
Consider the random product formed by $(f_1, \cdots, f_d) \in \mathrm{Diff}_{\leb}^r(S)^d$ and probability $p$. If $(f,Df_c)$ is pinching, then it exists $(g_1, \cdots, g_d) \in \mathrm{Diff}^r_{\leb}(S)^d$ arbitrarily $C^r$-close to $(f_1, \cdots, f_d)$ such that $(g,Dg_c)$ is twisting.
\end{lemma}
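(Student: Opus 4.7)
The plan is to adapt the perturbation argument of Lemma~\ref{l.twisting.dense} (and Lemma~\ref{l.twisting.dense.homeo}) to the random product setting. The key simplification, already noted before the statement, is that in this setting both the stable/unstable holonomies $h^{s/u}$ and the linear holonomies $H^{s/u}$ are the identity, because the skew product is locally constant in the base. As a consequence the twisting construction collapses to a purely fiber-side perturbation of one of the maps $f_j$, and no rearrangement of holonomy transports is needed.

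I would first reduce to $d\geq 2$ (the case $d=1$ is vacuous, since pinching already forces $L(f,\mu)>0$) and, for ease of exposition, to the case where the pinching periodic point $\hat{p}$ is a fixed point $(\ldots,i_0,i_0,\ldots)$; thus the pinching hypothesis translates to $f_{i_0}$ having positive Lyapunov exponent on a set of positive Lebesgue measure. Fix a compact Pesin block $K\subset S$ of positive measure on which the Oseledets splitting $t\mapsto(e^u(t),e^s(t))$ of $f_{i_0}$ is defined and continuous. Choose any $j\in\{1,\ldots,d\}\setminus\{i_0\}$ and take $\hat{z}$ with $\hat{z}_0=j$ and $\hat{z}_k=i_0$ for $k\neq 0$; this is a nontrivial element of $W^u_{loc}(\hat{p})\cap\sigma^{-1}(W^s_{loc}(\hat{p}))$, and since all holonomies are trivial one has $h=f_j$ and $H_t=Df_j(t)$.

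Next, I would apply Poincar\'e recurrence to the Lebesgue-preserving diffeomorphism $f_j$ on $S$ to pick a density point $t\in K$ together with a first-return time $j_t\geq 1$ such that $f_j^{j_t}(t)\in K$ is also a density point, and then choose a neighborhood $V$ of $t$ small enough that $V,f_j(V),\ldots,f_j^{j_t-1}(V)$ are pairwise disjoint. With $V$ in hand I would recycle the rotation perturbation of Lemma~\ref{l.twisting.dense}: in a volume-preserving chart at $t$ define $\phi_T$ as the Hamiltonian flow of $\rho_1(\|y\|)\rho_2(r')r'\,\partial/\partial\theta$, which is the identity outside $V$, fixes $t$, and satisfies $D\phi_T(t)=R_T$; then set $g_j=f_j\circ\phi_T$ and $g_i=f_i$ for $i\neq j$. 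The identity $g_{i_0}=f_{i_0}$ preserves pinching, $K$ and the Oseledets splitting verbatim, and the disjointness of the $V$-orbit forces
\[
Dg_j^{j_t}(t)=Df_j^{j_t}(t)\circ R_T.
\]
Thus twisting at $t$ reduces to asking that $R_T\{e^u(t),e^s(t)\}$ avoid the fixed pair $Df_j^{j_t}(t)^{-1}\{e^u(f_j^{j_t}(t)),e^s(f_j^{j_t}(t))\}$, which fails only for finitely many $T$ in any small interval. Continuity of the Oseledets splitting on $K$ then propagates the twist from $t$ to a full $\leb$-positive-measure subset, as required.

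The main obstacle I expect is the combinatorial bookkeeping that guarantees the local perturbation $\phi_T$ acts on the orbit only at time $0$; this is handled by the first-return choice above. A secondary subtlety appears if the pinching periodic orbit has higher period and uses every letter of $\{1,\ldots,d\}$, so that the perturbed $f_j$ necessarily reappears inside $f^\kappa_{\hat{p}}$: in that case I would localize the support of $\phi_T$ further, in a neighborhood of $t$ disjoint from the finite $f^\kappa_{\hat{p}}$-orbit of a suitable compact subset of $K$, which leaves $f^\kappa_{\hat{p}}|_K$ untouched and preserves pinching while still creating the twist.
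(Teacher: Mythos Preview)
Your proposal is correct and follows essentially the same approach as the paper: identify the pinching map $f_{i_0}$, choose a homoclinic point using a different letter $j\neq i_0$ so that the twisting map $h$ is (a factor of) $f_j$, and perturb only $f_j$ by a compactly supported rotation centered at a density point of a Pesin block, using Poincar\'e recurrence and disjointness of the short orbit segment to isolate the effect of the rotation. The paper's proof is terser (it defers the first-return bookkeeping to Lemma~\ref{l.twisting.dense}), and your extra remarks on $d\ge 2$ and the higher-period contingency are unnecessary here since the paper's pinching definition for random products is already stated at a fixed point; also note that in this setting the chart is purely two-dimensional, so the factor $\rho_1(\|y\|)$ in your vector field should simply be dropped.
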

\begin{proof}
The only difference from the proof of this lemma and lemma \ref{l.twisting.dense.homeo} is that we want to produce the twisting property just by perturbing one of the diffeomorphisms considered in the random product. Since $(f,Df_c)$ is pinching, it exists $i\in \{1,\cdots , d\}$ such that $f_i$ has positive Lyapunov exponent on a set of positive measure. Consider the fixed point $\tp = (\tp_j)_{j\in \integer}$, where $\tp_j = i$ for every $j\in \integer$. Fix $l\in \{1, \cdots, d\}$ such that $l\neq i$ and consider the point $\tilde{z} = (\tz_j)_{j\in \integer}$, where $\tz_1 = l$ and $\tz_j = i$ for $j\neq 1$. Observe that $\tz$ is a homoclinic point of $\tp$. 

Take $K \subset \{\tp\}\times S$, $t\in K$, $t_{\tz} = h^u_{\tp, \tz}(t)$ and $V\subset \{\tz\} \times S$ be as in the proof of lemma \ref{l.twisting.dense}. Notice that since the holonomies are the identity, $t_{\tz} = t$. Let $V' \subset V$ and $\varphi : D \to V'$ be a $C^r$-diffeomorphisms such that $\varphi(0) = t$. Using polar coordinates, consider $X(r',\theta) = r' \frac{\partial}{\partial \theta}$. Take $\rho: [0,1] \to [0,1]$ a bump function as in the proof of lemma \ref{l.twisting.dense} and consider the vector field $X'(r', \theta) = \rho(r')X(r',\theta)$. For each $t\in \real$, let $\phi_t$ be the time $t$ of the flow on $D$ generated by $X'$. Define a flow on $V'$ by
\[
\Phi_t = \varphi \circ \phi_t \circ \varphi^{-1}.
\]  
Since $\Phi_t$ is the identity on a neighborhood of the boundary of $V'$, we can extend it to a flow on $S$, which is the identity outside $V'$. We also denote this flow by $\Phi_t$. For each $t\in \real$ consider the diffeomorphism $g_l^t = f_l \circ \Phi_t$. For $t$ small, $g_l^t$ is $C^r$-close to $f_l$. By similar arguments as in the proof of lemma \ref{l.twisting.dense}, we can find $t\in \real$ arbitrarily small such that the random product formed by $(g_1 \cdots, g_d)$, with $g_j= f_j$ if $j\neq l$ and $g_l = g^t_l$, is twisting.   
\end{proof}
By theorem \ref{thm.ly17}, the pinching condition is $C^1$-dense in $\mathrm{Diff}^r_{\leb}(S)^d$. Using theorem \ref{t.pin.accum.homeo} and following the same steps in the conclusion of the proof of theorem $A$ we conclude the proof of theorem $C$. 

\section{The proof of theorem D}
\label{sectd}
Notice that in the proof of theorems $A$, $B$ and $C$, the only place where we use $C^1$-perturbations is to use theorem \ref{thm.ly17}, which states that by arbitrarily small $C^1$-perturbation one can get the pinching condition. All the other perturbations that we make are $C^r$. In a recent work, Berger and Turaev proved the following interesting result.
\begin{theorem}[Theorem $A$ in \cite{BeTur}]
\label{thm.bt}
For any surface $S$, if a diffeomorphism $f\in \mathrm{Diff}^{\infty}_{\leb}(S)$ has a periodic point which is not hyperbolic, then there is an arbitrarily small $C^{\infty}$-perturbation of $f$ such that the perturbed map $g\in \mathrm{Diff}^{\infty}_{\leb}(S)$ has a set of positive measure with positive Lyapunov exponents. 
\end{theorem}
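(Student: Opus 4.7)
The plan is to promote the non-hyperbolic periodic point $p$ of $f$ to a non-uniformly hyperbolic set of positive Lebesgue measure via a carefully controlled sequence of arbitrarily small $C^{\infty}$ perturbations localized in shrinking neighborhoods of the orbit of $p$. Let $k$ be the period of $p$. Since $f$ preserves area on a surface, $Df^k(p)\in SL$, and the non-hyperbolicity hypothesis means its eigenvalues lie on the unit circle. As a first step, I would make an arbitrarily small $C^{\infty}$ perturbation, supported on a small neighborhood of the orbit of $p$, that replaces $Df^k(p)$ by a rotation of Diophantine angle $\alpha$ (the parabolic cases $Df^k(p)=\pm \id$ can first be broken by an arbitrarily small linear perturbation into the elliptic regime, and Diophantine angles are dense on the circle). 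This reduction is standard and costs only a $C^{\infty}$-small change localized near the orbit.

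Next I would exploit the symplectic Birkhoff normal form at the elliptic point. Working in local symplectic polar coordinates $(r,\theta)$, after a further $C^{\infty}$-small perturbation, I can bring $f^k$ into twist form
\[
(r,\theta)\mapsto\bigl(r,\theta+\alpha+\beta_1 r+\beta_2 r^2+\cdots+\beta_N r^N\bigr)+O(r^{N+1}),
\]
where $N$ is as large as I wish and the twist coefficient $\beta_1$ can be arranged to be non-zero. This yields two competing geometric structures near $p$: on the one hand KAM theory provides a positive-measure set of invariant curves in every neighborhood of $p$; on the other hand, between any two consecutive invariant KAM circles the Poincaré--Birkhoff theorem forces hyperbolic periodic orbits, whose invariant separatrices can be made to cross transversely after one more $C^{\infty}$-small perturbation. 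Choosing such an orbit produces a Smale horseshoe $\Lambda\subset S$ for the perturbed map $g$, together with a transverse homoclinic loop.

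The heart of the argument is then to make this horseshoe \emph{thick} in the sense of Newhouse, and so thick that the union of its Pesin stable manifolds has positive Lebesgue measure. Starting from the transverse homoclinic intersection, I enter a Newhouse domain of persistent tangencies, and by Duarte's construction of conservative horseshoes of arbitrarily large stable and unstable Cantor thickness on surfaces, I perform a further $C^{\infty}$-small perturbation so that $\Lambda$ can be replaced by a horseshoe $\Lambda'$ whose stable and unstable Cantor sets have thickness product exceeding one. Iterating this scheme in a nested sequence of annular regions between successive KAM circles, and patching the perturbations together via a Whitney-type argument so that later perturbations do not destroy earlier horseshoes, I would arrange that
\[
\leb\Bigl(\bigcup_{j\geq 1} W^s(\Lambda'_j)\Bigr)>0.
\]
Every point of $W^s(\Lambda'_j)$ inherits the uniform positive exponent of $\Lambda'_j$ by Pesin theory, so the perturbed diffeomorphism $g$ has a set of positive Lebesgue measure on which both Lyapunov exponents are non-zero.

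The main obstacle is the last step: individual horseshoes on a surface have Lebesgue measure zero and their stable laminations typically do too, so obtaining positive area requires genuine control, not an off-the-shelf result. The mechanism that makes it possible is the combination of (i) the $C^{\infty}$ regularity, which lets one freely adjust the Birkhoff normal form to arbitrary finite order and so prescribe the thickness of the horseshoes produced; (ii) Duarte's (and Gorodetski--Kaloshin-type) constructions of ``maximally thick'' conservative horseshoes in Newhouse domains; and (iii) a carefully designed nested perturbation scheme in a neighborhood basis of $p$ whose supports are disjoint enough that Pesin stable manifolds from different scales accumulate on a set of positive area. Carrying out this nested construction while keeping the total perturbation arbitrarily $C^{\infty}$-small is the delicate technical part of Berger--Turaev's argument.
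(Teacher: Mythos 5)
You should first note that the paper does not prove this statement at all: it is quoted verbatim from Berger--Turaev \cite{BeTur} and used as a black box, so the only question is whether your argument would stand on its own. It does not, because its decisive step is impossible. You propose to make a horseshoe $\Lambda'$ so ``thick'' that $\leb\bigl(W^s(\Lambda')\bigr)>0$. For a $C^{1+\alpha}$ (in particular $C^{\infty}$) diffeomorphism, a theorem of Bowen--Ruelle asserts that a locally maximal hyperbolic set whose stable set has positive Lebesgue measure must be an attractor; an area-preserving surface diffeomorphism admits no proper attractors (a trapping region must be invariant up to measure zero, which forces the basic set to be open and hence the map Anosov, excluded in the presence of an elliptic point). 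Consequently every horseshoe you produce, no matter how large its Newhouse thickness, satisfies $\leb\bigl(W^s(\Lambda')\bigr)=0$, and a countable union of such sets still has measure zero. Thickness of the stable and unstable Cantor sets governs whether two Cantor sets must intersect --- the mechanism behind persistent tangencies --- and says nothing about the Lebesgue measure of the stable lamination; Duarte's thick conservative horseshoes still have stable sets of zero area. So in your scheme the KAM circles carry zero exponents and the Birkhoff instability zones between them are filled only with zero-measure hyperbolic sets: nothing of positive area with positive exponents is ever produced.

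The actual argument of \cite{BeTur} avoids horseshoes entirely. After reducing, as you do, to an elliptic periodic point with a well-chosen rotation number, they prove a realization (renormalization) statement: an arbitrarily small $C^{\infty}$ perturbation supported near the orbit allows one to prescribe, as a rescaled return map to a small disk, essentially any conservative diffeomorphism of the disk sufficiently close to the identity. The theorem is thereby reduced to constructing one $C^{\infty}$ area-preserving diffeomorphism of the disk, equal to the identity near the boundary and arbitrarily close to the identity, having positive Lyapunov exponents on a set of positive area (equivalently, positive metric entropy); this they build directly through an explicit composition of shears with a quantitative Pesin-type lower bound on the exponents. That direct construction of non-uniform hyperbolicity on a genuinely positive-area set is exactly the hard analytic content that your horseshoe mechanism cannot supply.
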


In the setting of theorem $D$, if $f_{\tp}$ has an elliptic periodic point, since elliptic periodic points are robust and using that $\mathrm{Diff}^{\infty}_{\leb}(S)$ is $C^r$-dense in $\mathrm{Diff}^r_{\leb}(S)$ (see \cite{Av09-reg}), then $C^r$-close to $f_{\tp}$ there is a diffeomorphism with positive exponents in a set of positive measure. With the same constructions as before we obtain that in this scenario, arbitrarily $C^r$-close to $f$ there is a map which is pinching and twisting, which concludes the proof of theorem $D$.

\section{$u$-states are closed}\label{s.u-closed}
This section is devoted to prove the $u$-states converge to $u$-states. This result maybe of independent interest we prove a more general version here. During the preparation of this paper a proof of this fact was given by Liang, Marin and Yang~\cite{LMY2} for the volume preserving case using recent results about entropy~\cite{AliYang}. Our proof is more direct and does not use entropy.

Let $f_k:M\to M$ be sequence of partially hyperbolic maps admitting linear holonomies converging $C^1$ to $f$ and $E$ be a fiber bundle over $M$ with smooth fibers $\cE$. 
We say that a sequence of cocycles $F^k:E\mapsto E$ over $f_k$ converges to $F:E\mapsto E$ if $F^k$ converges $C^0$ to $F$ and the holonomies converges uniformly in local unstable manifolds.

By remark~\ref{rem.continuity.holon} if $f_k\in \SPV$, $f_k\to f$ in $C^1$ and have the $C^{1+\alpha}$ norm uniformly bounded then $F_k\to F$.

This kind or results was proved for other particular cases, for example~\cite{AliYang} where they proved it when $f_k$ are equal to a fixed Anosov map, \cite{Pol16} where this is proved for $f_k$ a fixed partially hyperbolic map. The principal dificulty here is that the base map is no longer fixed.

We want to prove that if probability measures $m^k$ on $E$ are $F^k$-invariant $u$-states and $m^k\to m$ then $m$ is an $u$-state for $F$.

First let us focus on the smooth case. We actually prove a more general version than we need here, we prove that the $u$-states are closed for every volume preserving partially hyperbolic map (not necessarily a skew product). 

The proof will be an adaptation of the proof of \cite[Theorem~A.1]{Pol16} where the result was proved when the base map is fixed and the cocycle is independent of the base map.

Take $H=[-1,1]^{d_{u}}$ and $V=[-1,1]^{d_{cs}}$ lets write $(x,y)\in [-1,1]^d$ where $x\in H$ and $y\in V$.
For each $p\in M$ and $f$ $C^1$ partially hyperbolic diffeomorphism we call a H\"older continuous map $\Theta_f:[-1,1]^d\to \cV\subset M$ \emph{foliated chart} centered at $p$, if
\begin{itemize}
\item  $\Theta_f(H\times \{y\})\subset W^{uu}_{loc}(\Theta_f(0,y))$,
\item $f\mapsto \Theta_f$ is continuous in the $C^0([-1,1]^d)$ topology in a neighborhood of $f$,
\item for each $y\in V$, $\Theta_{f,y}:=\Theta_f\mid_{H\times \{y\}}$ is $C^1$ and $f\mapsto \Theta_{f,y}$ is continuous in the $C^r(H)$ topology in a neighborhood of $f$,
\item $\Theta_f\mid_{\{0\}\times V}$ is $C^1$, moreover we can take $\Theta_f(\{0\}\times V)=\Gamma$ constant in a neighborhood of $f$.
\end{itemize}

\begin{lemma}\label{l.volpres}
Let $f:M\to M$ be partially hyperbolic $C^{1+\alpha}$, $\alpha>0$, diffeomorphism and $\Theta:H\times V\to \cV$ be a foliated chart, then the pull back of the Lebesgue measure $\eta=\Theta^{-1}_*\vol\mid_\cV$ has the form $\rho_f dxdy$, where $dxdy$ is the usual volume form in $H\times V$ and $\rho_f$ varies continuously with $f$ with respect to the $C^1$ norm in subsets of $C^{1+\alpha}$ bounded norm.
\end{lemma}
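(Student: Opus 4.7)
The core idea is to use the Brin--Pesin absolute continuity theorem for the strong unstable foliation and the fact that $\Theta$ is smooth along the leaves and along the fixed transversal $\Gamma$. The only place where the non-smoothness of $\Theta$ in the transverse direction could prevent us from writing $\eta$ with a density is exactly the regularity of unstable holonomies, which Brin--Pesin controls. First I would recall that for a $C^{1+\alpha}$ partially hyperbolic diffeomorphism the strong unstable foliation $\cW^{uu}$ is absolutely continuous; more precisely, the disintegration of $\vol|_{\cV}$ along the plaques $\cW^{uu}_{loc}(z)$, $z \in \Gamma$, has the form
\[
\vol|_{\cV}=\int_{\Gamma} \psi_f(q)\, d\vol^{u}_z(q)\, d\nu_f(z),
\]
where $\vol^u_z$ is the induced Riemannian measure on $\cW^{uu}_{loc}(z)$, $\nu_f$ is absolutely continuous with respect to the Riemannian measure $\vol^{\Gamma}$ on $\Gamma$ with density $J^{\Gamma}_f$, and both $\psi_f$ and $J^{\Gamma}_f$ are continuous and vary continuously with $f$ in the $C^0$ topology on sets of bounded $C^{1+\alpha}$ norm.

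Next I would pull this back by $\Theta=\Theta_f$. Since $\Theta_{f,y}:H\to \cW^{uu}_{loc}(\Theta(0,y))$ is a $C^1$ diffeomorphism which varies continuously in the $C^r$ (hence $C^1$) topology with $f$, its Jacobian $J_{\Theta_{f,y}}(x)$ with respect to the Riemannian measure on the unstable plaque depends continuously on $f$, uniformly in $(x,y)$. Since $\Gamma=\Theta_f(\{0\}\times V)$ can be taken constant near $f$ and $\Theta_f|_{\{0\}\times V}$ is $C^1$, the Jacobian $J^0_f(y):=|\det D(\Theta_f|_{\{0\}\times V})(y)|$ is continuous and varies continuously with $f$. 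For any Borel set $A\subset H\times V$ an application of Fubini applied to the disintegration above yields
\[
\eta(A)=\vol(\Theta_f(A))=\int_V\!\!\int_{\{x:(x,y)\in A\}}\! \psi_f(\Theta_f(x,y))\, J_{\Theta_{f,y}}(x)\, J^{\Gamma}_f(\Theta_f(0,y))\, J^0_f(y)\, dx\, dy,
\]
so $\eta=\rho_f\, dx\, dy$ with
\[
\rho_f(x,y)=\psi_f(\Theta_f(x,y))\, J_{\Theta_{f,y}}(x)\, J^{\Gamma}_f(\Theta_f(0,y))\, J^0_f(y).
\]

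The continuity statement now reduces to showing that each of the four factors depends continuously on $f$ in the $C^0$ norm on $H\times V$, uniformly on subsets of bounded $C^{1+\alpha}$ norm. The factors $J_{\Theta_{f,y}}$ and $J^0_f$ are immediate from the definition of foliated chart and the continuity properties built into $f\mapsto \Theta_f$. The factors $\psi_f\circ \Theta_f$ and $J^{\Gamma}_f\circ \Theta_f|_{\{0\}\times V}$ combine the continuity of $\Theta_f$ in $C^0$ with the continuous dependence of the Brin--Pesin densities on $f$.

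The main obstacle is precisely the last point: proving that the Jacobian of the unstable holonomy, hence $J^{\Gamma}_f$, varies $C^0$-continuously with $f$, uniformly on $C^{1+\alpha}$-bounded sets. This is a quantitative version of Brin--Pesin: the standard proof expresses the unstable-holonomy Jacobian between two nearby transversals as an infinite product of ratios of determinants of $Df|_{E^u}$ along pairs of forward orbits on the same unstable leaf, and the $\alpha$-H\"older regularity of $E^u$ (with H\"older constants controlled by the $C^{1+\alpha}$ norm of $f$) provides a geometric-series bound that makes the product converge uniformly. The same estimates then show that if $f_k\to f$ in $C^1$ with uniformly bounded $C^{1+\alpha}$ norms, the partial products converge uniformly, yielding the desired continuity of $J^\Gamma_{f_k}\to J^\Gamma_f$. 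Once this is in place, the factorization of $\rho_f$ above gives the lemma.
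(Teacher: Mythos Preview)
Your proposal is correct and follows essentially the same route as the paper: disintegrate $\vol|_{\cV}$ along unstable plaques over the fixed transversal $\Gamma$ using absolute continuity of $\cW^{uu}$, then pull back through $\Theta$ using that $\Theta_{f,y}$ and $\Theta_f|_{\{0\}\times V}$ are $C^1$. The paper packages the Brin--Pesin input slightly differently---it writes the conditional density $\varrho_f$ on each leaf via the infinite product $\lim_{n\to-\infty}\det Df^n|_{E^u(x')}/\det Df^n|_{E^u(y')}$ and notes its uniform convergence on $C^{1+\alpha}$-bounded sets, rather than isolating the transverse holonomy Jacobian $J^{\Gamma}_f$ as you do---but this is the same estimate, and the four-factor decomposition you wrote down matches the paper's Fubini step.
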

\begin{proof}
Let $\nu=\vol\mid_\cV$ and take the disintegration of $\nu$ into local unstable manifolds $\nu=\nu_{y'}d\nu_\Gamma$ where $y'\in \Gamma=\Theta(\{0\}\times V)$, by the absolute continuity of the unstable foliation $\nu_\Gamma$ is the (normalized) lebesgue measure on $\Gamma$ and $\nu_{y'}=\varrho_f vol_{W^u(y')}$ where $\varrho_f$ is defined up to normalization by 
\begin{equation}\label{eq.densitiy}
\frac{\varrho_f(x')}{\varrho_f(y')}=\lim_{n\to -\infty}\frac{\det Df^n\mid_{ W^u(x')}}{\det Df^n\mid_{W^u(y')}}
\end{equation}
moreover the limit in \eqref{eq.densitiy} is uniform in subsets of bounded $C^{1+\alpha}$ norm. So, normalizing as $\nu_{y'}(H\times y')=1$, $\varrho_f$ depends continuously on $f$ with respect to the $C^1$ topology in subsets of bounded $C^{1+\alpha}$ norm.

Disintegrating $\eta$ in horizontals we have $\eta=\eta_y d \eta_V$, where $\eta_y=\Theta^{-1}_*\nu_{\Theta(y)}$ and $\eta_V=\Theta^{-1}_*\nu_\Sigma$.

Let $\varphi:H\to \real$ be a continuous function, then 
$$
\begin{aligned}
\int \varphi d\eta_y&=\int \varphi\circ \Theta^{-1}(t)\nu_{\Theta(y)}(t)\\
&=\int \varphi\circ \Theta^{-1}(t)\varrho_f(t)d \vol_{W^u(\Theta(y))}(t)\\
&=\int \varphi \abs{\det \Theta_y(t)}\varrho_f(\Theta(t))d x
\end{aligned}.
$$

So $\eta_y=\rho'_f(\cdot,y)dx$ where $\rho'_f$ is continuous and depends continuously on $f$. Analogously $\eta_V=\rho''_f dy$,
then by Fubini $\eta=\rho_f dxdy$, with $\rho_f$ continuous on $f$.
\end{proof}

Now lets define our foliated charts for the skew products over hyperbolic homeomorphisms.

Fix a local chart given by the product structure 
$$
\theta:W^s_{loc}(\tx_0)\times W^u_{loc}(\tx_0)\to \cV\subset \tM\text{, }\theta(x,y)=[x,y].
$$

Let $x^u\in W^u_{loc}(\tx_0)$ be fixed, for any $\tx=(x^-,x^+)\in W^s_{loc}(\tx_0)\times W^u_{loc}(\tx_0)$, let $\phi(\tx) = [x^-,x^u]$ and then define
\begin{equation*}
\Theta(\tx,t)=\big(\tx,h^{u}_{\varphi(\tx),\tx}(t)\big).
\end{equation*}

This local chart sends $\{x^-\}\times W^u_{loc}(\tx_0)\times \{t\}$ to $W^{uu}_{loc}(x^-,x^u,t)$. Take 
$H=W^u_{loc}(\tx_0)$ and $V=W^s_{loc}(\tx_0)\times S$, then the foliated chart is given by $\Theta:H\times V\mapsto \cV$.

As $h^u_{\varphi(\tx),\tx}$ preserves the volume on $S$ the next lemma follows
\begin{lemma}\label{l.hyphom}
Let $\sigma:\Sigma\to\Sigma$ be a hyperbolic homeomorphism with invariant measure $\hmu$ and $f\in \SP(\Sigma\times S)$, then 
$\Theta^* \tmu\times \leb=\tmu\times \leb$.
\end{lemma}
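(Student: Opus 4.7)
The plan is to reduce the statement to the single fact that each unstable holonomy $h^u_{\varphi(\tx),\tx}:S\to S$ preserves Lebesgue on $S$. Observe that $\Theta$ is a fibered map: it fixes the base coordinate $\tx$ and acts on each fiber $\{\tx\}\times S$ by the homeomorphism $h^u_{\varphi(\tx),\tx}$. By Fubini, for any measurable $E\subset \cV$,
\[
\Theta^{*}(\tmu\times \leb)(E)=\int_{\Sigma}\leb\bigl((h^u_{\varphi(\tx),\tx})^{-1}(E_\tx)\bigr)\,d\tmu(\tx),
\]
where $E_\tx$ denotes the $\tx$-slice. So once we know that each $h^u_{\varphi(\tx),\tx}$ is $\leb$-preserving, the inner integrand collapses to $\leb(E_\tx)$ and the claim follows by another application of Fubini.

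To verify that the unstable holonomy preserves Lebesgue, I would invoke the unstable analogue of Proposition~\ref{prop.holonomyexistence} (applied to $f^{-1}$), which yields the explicit $C^0$-uniform limit
\[
h^u_{\hx,\hy} = \lim_{n\to\infty} (f^{-n}_\hy)^{-1}\circ f^{-n}_\hx
\]
for $\hx\in W^u_{loc}(\hy)$. Each truncation is a finite composition of fiber maps of the form $f_{\sigma^{-j}(\cdot)}^{\pm 1}$, and by the very definition of $\SP(\Sigma\times S)$ every such fiber map lies in $\Dif^r_\leb(S)$, hence preserves $\leb$. Thus each approximant preserves $\leb$.

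Finally, volume-preservation is closed under $C^0$-uniform limits on the compact surface $S$: for any continuous $\psi:S\to\real$, the identity $\int \psi\circ g_n\,d\leb = \int \psi\,d\leb$ passes to $n\to\infty$ by bounded convergence, giving $\int \psi\circ h^u_{\varphi(\tx),\tx}\,d\leb=\int\psi\,d\leb$. Combining with the Fubini reduction above yields the lemma. I do not anticipate any serious obstacle here; the only points requiring care are keeping track of the direction of composition in the holonomy limit, and noting that $\Theta$ has no effect whatsoever on the base factor (so the base measure $\tmu$ need not itself be invariant under anything for the argument to go through).
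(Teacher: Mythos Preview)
Your proof is correct and follows exactly the paper's approach: the paper simply records, immediately before the lemma, that ``As $h^u_{\varphi(\tx),\tx}$ preserves the volume on $S$ the next lemma follows,'' and offers no further argument. You have merely supplied the details behind that one-line justification --- the Fubini reduction to the fiberwise statement, and the fact that $\leb$-preservation passes to $C^0$-limits of the approximants from Proposition~\ref{prop.holonomyexistence}.
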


Now we can prove that $u$-states converges to $u$-states in both cases. Let $f_k$ be a partially hypebolic volume preserving map or $f_k\in \SP(\Sigma\times S)$, that converges to $f$ in the corresponding topology.
We have our theorem

\begin{theorem}\label{t.closedu}
Let $F_k\to F$ and $m^k$ be $u$-states converging weakly to $m$, then $m$ is an $u$-state for $F$.
\end{theorem}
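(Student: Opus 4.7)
The plan is to localize in foliated charts and reformulate the $u$-state property as an integral identity whose ingredients all converge under the hypotheses. Fix a foliated chart $\Theta_{f_k} : H \times V \to \mathcal{V}_k \subset M$ as in the discussion preceding Lemma~\ref{l.volpres}. By construction $\Theta_{f_k} \to \Theta_f$ in $C^0$, the horizontals $\Theta_{f_k,y}$ converge in $C^r(H)$, the transversal $\Theta_{f_k}|_{\{0\}\times V}$ can be chosen equal to a fixed $\Gamma$, and the pullback of the base measure $\mu$ has the form $\rho_{f_k}\, dx\, dy$ with $\rho_{f_k} \to \rho_f$ in $C^1$ (volume preserving case, by Lemma~\ref{l.volpres}) or with $\rho_{f_k}$ independent of $k$ (hyperbolic homeomorphism case, by Lemma~\ref{l.hyphom}). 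The hypothesis $F^k \to F$ furthermore provides uniform convergence of the linear unstable holonomies $H^{u,k}$ of $F^k$ along plaques.

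Writing $\Gamma(y) := \Theta_{f_k}(0,y)$, the $u$-state property of $m^k$ is equivalent, on each foliated box $\pi^{-1}(\mathcal{V}_k)$, to the identity
\[
\int_E \phi\, dm^k = \int_V \int_H \rho_{f_k}(x,y) \int_{\cE} \phi\bigl(\Theta_{f_k}(x,y),\, H^{u,k}_{\Gamma(y), \Theta_{f_k}(x,y)} v\bigr)\, dm^k_{\Gamma(y)}(v)\, dx\, dy,
\]
for every continuous $\phi : E \to \real$ compactly supported in $\pi^{-1}(\mathcal{V}_k)$. Indeed, this says exactly that on each plaque $H \times \{y\}$ the disintegration of $m^k$ equals the transversal fiber measure $m^k_{\Gamma(y)}$ transported by the linear unstable holonomies, weighted by the conditional $\rho_{f_k}(\cdot, y)\, dx$ of $\mu$ along the plaque; testing against a countable collection of foliated boxes covering $M$ then recovers the almost-everywhere holonomy equivariance of $x \mapsto m^k_x$.

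The limit argument proceeds in two stages. First, by choosing $\phi$ supported in a narrow tube around $\Gamma$, the identity represents $\eta^k(\psi) := \int_V \int_\cE \psi(y,v)\, dm^k_{\Gamma(y)}(v)\, dy$ as the $m^k$-integral of a continuous function on $E$, depending continuously on $(f_k, F^k)$ via $\Theta_{f_k}, \rho_{f_k}, H^{u,k}$; by $m^k \to m$ weakly and uniform convergence of the data, we deduce $\eta^k \to \eta$ weakly on $V \times \cE$. Second, with $\eta^k \to \eta$ in hand, we substitute into the identity for a general $\phi$ and pass to $k \to \infty$: the left-hand side converges to $\int \phi\, dm$ by weak convergence, and the right-hand side converges to the analogous expression for $f$ and $F$ because its integrand converges uniformly via the continuous data. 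The resulting identity for $m$ is precisely the $u$-state condition, so $m$ is a $u$-state for $F$.

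The main obstacle is that weak convergence does not pass to disintegrations pointwise, so one cannot simply argue $m^k_{\Gamma(y)} \to m_{\Gamma(y)}$ fiberwise. The proof circumvents this by encoding the $u$-state property entirely in terms of quantities that depend continuously on $(f_k, F^k, m^k)$ through the foliated chart, and by recovering the transversal fiber measures only implicitly, as the weak limit $\eta$ on $V \times \cE$ rather than as pointwise limits. A secondary technical point is that the foliated boxes $\mathcal{V}_k$ vary with $k$, but this is harmless since $\Theta_{f_k} \to \Theta_f$ uniformly and one can shrink the support of $\phi$ so that it lies in $\pi^{-1}(\mathcal{V}_k)$ for all large $k$.
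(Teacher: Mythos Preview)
Your approach is correct and follows essentially the same strategy as the paper's: localize in foliated charts $\Theta_{f_k}$, straighten the fiber disintegration by the linear unstable holonomies, and pass to the limit using the uniform convergence of the chart data $(\Theta_{f_k},\rho_{f_k},H^{u,k})$. The packaging differs in one respect. The paper pushes $m^k$ forward by the straightening map $\cH_k(x,y,v)=(x,y,H^{u,k}_{\Theta_k(x,y),\Theta_k(0,y)}v)$ to obtain $\tm^k$ on $H\times V\times\cE$, proves $\tm^k\to\tm$ via two adapted lemmas from \cite{Pol16}, and then quotes \cite[Proposition~A.7]{Pol16} to conclude that the $\nB_0$-measurability of $p\mapsto\tm^k_p$ (i.e., constancy of the disintegration along horizontals) passes to the limit. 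Your transversal measure $\eta^k$ on $V\times\cE$ carries the same information as $\tm^k$ once the latter is known to be constant in $x$, and your two stages correspond exactly to the paper's two lemmas plus the black-box invocation of Proposition~A.7, now unwound as a direct integral identity. The payoff of your route is self-containment; the paper's is brevity by outsourcing the final measurability step. One caution: in your first stage, the ``narrow tube'' trick gives only an approximate representation of $\eta^k(\psi)$ as $\int\phi\,dm^k$ unless you carefully normalize by $\int_H\rho_{f_k}(x,y)\chi(x)\,dx$; it is cleaner to define $\eta$ as the weak limit of $\eta^k$ (which exists since each $\eta^k(\psi)$ equals $\int\phi_k\,dm^k$ for a uniformly convergent family $\phi_k$) rather than via the disintegration of $m$, to avoid the apparent circularity in the phrase ``the analogous expression for $f$ and $F$''.
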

\begin{proof}
Let $\mu$ be the lebesgue measure if we are in the partially hyperbolic volume preserving case or $\hmu\times \leb$ in the other case. 

Denote by $\Theta_k:H\times V\to \cV$ the foliated chart for $f_k$ and let $\eta^k=\Theta_k^*\mu\mid_\cV$. By lemmas~\ref{l.volpres} and \ref{l.hyphom} we can write 
$\eta^k=\rho_k \eta$ for some $\rho_k:H\times V\to \real$, where $\eta=\Theta^*\mu\mid_\cV$, moreover there exist $0<c<C$ such that $c\leq \rho_k \leq C$.
 
Observe that it is sufficient to prove the result locally, so we can suppose that the projective tangent bundle is locally a product $\cV\times \cE$.

Define $\cH_k:H\times V\times \cE\to H\times V\times \cE$ as
$$
(x,y,v)\to (x,y,H^{u,k}_{\Theta_k(x,y),\Theta_k(0,y)}(v))
$$

Let $\hat{m}^k=\left(\Theta_k^{-1}\times \id\right)*m^k\mid_{\cV\times \cE}$, define $\tm^k={\cH_k}_*\hm^k$. Observe that $m^k$ is a $u$-state if and only if the disintegration of $\tm^k$ with respect to the projective fibers is such that 
\begin{equation}\label{eq.ustate}
\tm^k_{(x,y)}=\tm^k_{(x',y)}\text{ for }\eta\text{-almost every }(x,y),(x',y)\in H\times V.
\end{equation}

Denote by $\nB_0$ the sigma algebra of sets $H\times \{y\}$, $y\in V$, then \eqref{eq.ustate} is equivalent to $p\mapsto \tm^k_p$ being $\nB_0$ measurable.

So we are left to prove that $m^k\to m$ implies that $p\mapsto \tm_p$ is also $\nB_0$ measurable.

We follow the proof of \cite[Proposition~A.7]{Pol16} where this was proved for cocycles with a fixed base map. In order to do this we first need to adapt Lemma~A.3 and Lemma~A.4 of \cite{Pol16} to our scenario.

\begin{lemma}[Addaptation of Lemma~A.3]\label{l.measurable1}
Let $\phi:H\times V\times\cE\to \real$ be a measurable bounded function such that for every $p\in H\times V$, $v\mapsto \phi(p,v)$ is continuous, then $\int \phi d \hm^k\to \int \phi d \hm$.
\end{lemma}
\begin{proof}
To simplify the notation let $\hTheta_k:=\Theta_k\times \id$.
Recall that ${\Theta_k^{-1}}_*\mu_k\mid_\cV=\rho_k\eta$ is the projection of $\hm^k=(\hTheta_k)_* m^k$.

We have
$$
\begin{aligned}
\abs{\int \phi\circ \hTheta_k^{-1}  d m^k- \int \phi\circ \hTheta^{-1} d m}\leq\\
 \abs{\int \phi\circ\hTheta_k^{-1} - \phi\circ \hTheta^{-1} d m^k}+\abs{\int \phi\circ \hTheta^{-1}  d m^k- \int \phi\circ \hTheta^{-1} d m}.
\end{aligned}
$$

By \cite[lemma~A.3]{Pol16} the second term goes to zero when $k\to\infty$, so we focus on the first term.

Fix $\varepsilon>0$ and take a compact set $K\subset H\times V$ such that $\eta(H\times V\setminus K)<\frac{\varepsilon}{C\norm{\phi}}$ and $\phi$ is continuous in $K\times \cE$. Take   $\phi':H\times V\times \cE \to \real$ a continuous function such that $\phi(p,v)=\phi'(p,v)$ for every $p\in K$, $v\in \cE$ and $\norm{\phi'}\leq \norm{\phi}$, then 

$$\begin{aligned}\abs{\int \phi\circ \hTheta_k^{-1} - \phi\circ \hTheta^{-1} d m^k}\leq\\
 \int\abs{ \phi'\circ \hTheta_k^{-1} - \phi'\circ \hTheta^{-1}} d m^k+\int\abs{ \phi' - \phi} d (\hTheta_k^{-1})_* m^k +\int\abs{ \phi' - \phi} d \hTheta^{-1}_* m^k.
\end{aligned}$$

The first term goes to zero by continuity of $\phi'$ and the second and last term are bounded by $\sup\{\rho_k\}\eta(H\times V\setminus K)<\epsilon$.
\end{proof}
\begin{lemma}[Addaptation of Lemma~A.4]
If $m^k\to m$ then $\tm^k\to \tm$.
\end{lemma}
\begin{proof}
Take $\phi:H\times V\times \cE\to \real$ uniformly continuous, then 
$$
\begin{aligned}
\abs{\int \phi d \tm^k-\int \phi d\tm}\leq \\
 \abs{\int \phi\circ \cH d \hm^k-\int \phi\circ \cH d \hm}+\int \abs{\phi\circ \cH_k - \phi\circ \cH }d \hm^k
\end{aligned}
$$
The first term goes to zero by lemma~\ref{l.measurable1} and the second one by uniform convergence of the holonomies.
\end{proof}
Now the \cite[Proposition~A.7]{Pol16} can be directly addapted. So we have that $p\mapsto \tm_p$ is $\nB_0$ measurable concluding the proof. 
\end{proof}
\begin{remark}
Observe that Theorem~\ref{t.closedu} can be addapted to a non conservative setting, when $\mu_k$ are $f_k$-invariant probabilities converging to an $f$-invariant probability $\mu$ such that $\eta^k=\Theta_k^*\mu_k\mid_\cV$ has the form
$\eta^k=\rho_k \eta$ for some $\rho_k:H\times V\to \real$ and there exist $0<c<C$ such that $c\leq \rho_k \leq C$.
\end{remark}

\section{Further applications and questions}
In this section we explain some further applications of our techniques and some conjectures.

\subsection{$C^r$ density}\label{ss.cr-density}
As we mentioned in section \ref{sectd}, the only place where we use $C^1$-perturbations is for theorem \ref{thm.ly17}, which in the setting of theorem $D$ is replaced by theorem \ref{thm.bt}. Both theorems are used to obtain the pinching property. So to prove $C^r$ density of positive exponents using our techniques we have to answer the following question:
\emph{Is the pinching property $C^r$ dense? Or is $C^r$ dense in some sub-class of diffeomorphisms?}
\subsection{Higher dimension}
In the whole work we assume that $S$ is a $2$-dimensional manifold. First because in the two dimensional case positive exponents in a positive measures set implies non uniformly hyperbolicity in this set, in higher dimension using our techniques we can only hope to have at least one positive exponents.

As a technicall reason, we use the assumption in the proof of Lemma~\ref{l.twisting.dense} to do the perturbation, we belive that this can be addapted to a higher dimensional case (see~\cite{Pol16} where a higher dimensional case is treated for linear cocycles). Also we use the $2$-dimensional assumption in Lemma~\ref{lem.smoothpath}, we do not know if this lemma is true un for higher dimensional manifolds.

In the case o random product, as the perturbations are locally constant we do not need the homotopy constructed in Lemma~\ref{lem.smoothpath}, so we conjecture that 
\begin{conjecture}
Let $S$ be a manifold of dimension greater than $2$, then there exist a $C^1$ dense and $C^r$ open set of volume preserving diffeomorphisms such that the random product has at least one positive exponent.
\end{conjecture}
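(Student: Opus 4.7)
The plan is to follow the three-step pinching$+$twisting$+$invariance-principle strategy used in the proof of Theorem~C, adapting each step to higher-dimensional fibers. The advantage of the random-product setting is that local holonomies are the identity and the perturbations can be taken locally constant on $\Sigma$, so the symplectic homotopy of Lemma~\ref{lem.smoothpath}---the only ingredient of the arguments for Theorems~A--C that is intrinsically two-dimensional---is not needed. The four required pieces are: (i) $C^1$-density of a higher-dimensional pinching condition; (ii) a $C^r$-small local perturbation producing twisting from pinching; (iii) a higher-dimensional criterion stating that pinching and twisting force the top Lyapunov exponent to be positive; (iv) a closure-of-$su$-states statement on the appropriate bundle giving $C^1$-openness.

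Declare $(f_1,\dots,f_d)$ to be \emph{pinching} if some $f_i$ has at least one positive Lyapunov exponent on a set of positive Lebesgue measure in $S$. Theorem~\ref{thm.ly17} applies to any compact manifold of dimension $\geq 2$ and yields an arbitrarily $C^1$-small perturbation of $f_i$ with this property, so pinching is $C^1$-dense in $\Dif^r_\leb(S)^d$. Fix a pinched tuple, let $\hp=(\dots,i,i,\dots)$ be the corresponding fixed sequence of $\sigma$, and fix a compact set $K\subset S$ of positive Lebesgue measure on which the Oseledets flag $E^+(t)\subset E^2(t)\subset\dots\subset T_tS$ of the derivative cocycle over $f_i$ exists and varies continuously; here $E^+(t)$ is the one-dimensional top direction.

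For twisting we adapt Lemma~\ref{l.randomtwist}. Fix $l\ne i$ and let $\hz$ be the homoclinic sequence with $\hz_1=l$ and $\hz_k=i$ otherwise, so $t_\hz=t$ and the twisting map $H_t$ in the random-product setting reduces to an expression involving only $Df_l$. Choose a density point $t\in K$ with $h^j(t)\in K$ also a density point for some $j\in\natural$, and pick local coordinates $(x_1,\dots,x_n)$ on a small ball $B\subset S$ centered at $t$ such that $\partial_{x_1}|_t$ spans $E^+(t)$ and $\partial_{x_2}|_t$ is transverse to the full Oseledets flag at $t$. The divergence-free, compactly supported vector field
\[
X(x)=\rho(|x|)\bigl(x_2\,\partial_{x_1}-x_1\,\partial_{x_2}\bigr),
\]
with $\rho$ a smooth bump equal to $1$ near the origin, generates a volume-preserving $C^r$-flow $\phi_\tau$ on $S$ that is the identity outside $B$ and whose derivative $D\phi_\tau(t)$ is a non-trivial rotation on the $(x_1,x_2)$-plane. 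Replacing $f_l$ by $f_l\circ\phi_\tau$ yields, for all sufficiently small $\tau>0$, a $C^r$-small perturbation for which $(H^\tau_t)^j(E^+(t))$ avoids the full Oseledets flag at $h^j(t)$; continuity of the flag on $K$ at a density point then promotes twisting to a positive-measure subset, exactly as in Lemma~\ref{l.randomtwist}.

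It remains to assemble a higher-dimensional analog of Theorem~\ref{th.criterionexponents}. Since $f_i\in\Dif^r_\leb(S)$ the cocycle $Df_c$ takes values in $SL^{\pm}(n,\real)$, so either all Lyapunov exponents vanish or the top one is strictly positive. Suppose all vanish; the Avila--Viana invariance principle for bounded linear cocycles over a Bernoulli shift implies that every $F$-invariant probability on the projective bundle $PTS$ projecting to $\mu$ is an $su$-state. The $su/c$-disintegration argument of Proposition~\ref{p.p+t}, applied to the top Oseledets direction on $PTS$, contradicts this provided twisting holds. Theorem~\ref{t.closedu} extends to $PTS$ (a smooth fiber bundle over $\Sigma\times S$), so the non-existence of an $su$-state is $C^1$-open among tuples of uniformly bounded norm, and combined with $C^1$-density of pinching this yields the conjecture. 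The main obstacle is making precise the higher-dimensional invariance principle: one must establish that when the top Lyapunov exponent of a bounded $SL$-cocycle over a Bernoulli shift vanishes, every $F$-invariant probability on $PTS$ is an $su$-state, and reconcile the twisting condition (which only controls the top direction under the homoclinic return) with this conclusion in higher rank.
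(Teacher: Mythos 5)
This statement is stated in the paper as an open \emph{conjecture}; there is no proof of it in the paper to compare against, so your proposal has to stand on its own. Your outline is exactly the route the authors themselves envision (they note in Section~8 that the homotopy of Lemma~\ref{lem.smoothpath} is not needed for random products and that they ``believe'' the perturbation of Lemma~\ref{l.twisting.dense} can be adapted), and two of your reductions are sound: Theorem~\ref{thm.ly17} does give $C^1$-density of pinching in any dimension, and for volume-preserving fiber maps the vanishing of $L(f)$ forces \emph{all} fiberwise exponents to vanish $\mu$-a.e.\ (they are ordered and sum to zero), so the extremal exponents coincide and the invariance principle applies on $PTS$.

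The genuine gap is the higher-rank analogue of Proposition~\ref{p.p+t}, which you flag but do not close, and it is not a routine adaptation. The two-dimensional proof uses that pinching forces the conditional measures over the periodic fiber to be a combination of exactly two atoms, $m_t=a(t)\delta_{e^u(t)}+b(t)\delta_{e^s(t)}$, so that any holonomy loop displacing the pair $\{e^u,e^s\}$ yields a contradiction. When $\dim S=n>2$, an invariant measure on $P(T_tS)$ for the periodic cocycle $Df_i^\kappa$ is only constrained to be supported on the union of the projectivized Oseledets subspaces $P(E_1(t))\cup\dots\cup P(E_k(t))$, which are positive-dimensional unless the spectrum of $f_i$ at $t$ is simple; your pinching hypothesis (one positive exponent) gives no simplicity, and Theorem~\ref{thm.ly17} does not provide it. Consequently a rotation in a single $(x_1,x_2)$-plane moves only the top line $E^+(t)$ and cannot be guaranteed to move the support of $m_t$ off the corresponding support at $h^j(t)$: the measure could, for instance, be carried entirely by a higher-dimensional $P(E_j)$ that the rotation preserves or maps into the analogous set. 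To run the argument one needs a twisting condition putting the image of the whole Oseledets flag in general position with respect to the flag at the return point (as in the symplectic linear-cocycle case of \cite{Pol16}), together with a pinching condition strong enough to make the conditional measures sufficiently rigid; producing that by a $C^r$-small volume-preserving perturbation of a single $f_l$ is precisely the open content of the conjecture.
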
 

\information

\end{document}